\def\CC{{\mathbb C}}
\def\NN{{\mathbb N}}
\def\QQ{{\mathbb Q}} 
\def\RR{{\mathbb R}} 
\def\ZZ{{\mathbb Z}}
\def\v{{\scriptscriptstyle{\vee}}}
\def\d{\dagger}
\def\lrp{locally rationally polyhedral}
\def\rp{rationally polyhedral}
\def\rp{rationally polyhedral}
\def\nbhd{neighborhood}
\def\half{\tfrac{1}{2}}
\def\G{{\Gamma}}
\def\g{{\gamma}}
\def\pt{{\bullet}}
\def\Gcal{{\mathcal G}} 
\def\Hcal{{\mathcal H}}
\def\la{\langle}
\def\ra{\rangle}
\newcommand\aff{\operatorname{Aff}}
\newcommand\ann{\operatorname{Ann}}
\newcommand\aut{\operatorname{Aut}}  
\newcommand\codim{\operatorname{codim}} 
\newcommand\Hom{\operatorname{Hom}}
\newcommand\sym{\operatorname{Sym}}
\newcommand\GL{\operatorname{GL}}
\newcommand\SL{\operatorname{SL}}
\newcommand\Star{\operatorname{Star}}
\newcommand\Orth{\operatorname{O}}
\newcommand\PSL{\operatorname{PSL}}
\newcommand\Tr{\operatorname{T}}
\newcommand\as{\operatorname{As}}
\newtheorem{theorem}{Theorem}[section]
\newtheorem{lemma}[theorem]{Lemma}
\newtheorem{proposition}[theorem]{Proposition}
\newtheorem{propdef}[theorem]{Proposition-Definition}
\newtheorem{corollary}[theorem]{Corollary}
\theoremstyle{definition}
\newtheorem{definition}[theorem]{Definition}
\newtheorem{application}[theorem]{Application}
\newtheorem{example}[theorem]{Example}
\theoremstyle{remark} 
\newtheorem{remark}[theorem]{Remark}
\newtheorem{question}[theorem]{Question}
\begin{document}

\title[Convex cones of finite type]{Discrete automorphism groups of convex cones of finite type}
\author{Eduard Looijenga}
\email{Eduard@math.tsinghua.edu.cn}
\address{Mathematical Sciences Center, Jin Chun Yuan West Building\\ 
Tsinghua University\\
Haidan District, Beijing 100084, P.R. China}
\subjclass{Primary: 52C07, secondary: 20H05}
\keywords{convex cone, arithmetic group}

\begin{abstract}
We investigate subgroups of $\SL (n,\ZZ)$ which preserve an open nondegenerate convex cone in $\RR^n$ and admit in that cone as fundamental domain a polyhedral cone of which some faces are allowed to  lie on the boundary. Examples are arithmetic groups acting on self-dual cones, Weyl groups of certain Kac-Moody algebras and do occur in algebraic geometry as the
automorphism groups of projective manifolds acting on their ample cones. 
\end{abstract}

\maketitle

\section*{Introduction}
This story begins with the seemingly innocuous Theorem \ref{thm:bdlypol}, which might have a place in  the theory of linear programming. It is easily stated, even in an introduction: if $V$  is a finite dimensional real vector space, $L\subset V$ a lattice  and $C$ an open nondegenerate convex cone in $V$, then the convex hull of $C\cap L$ is locally polyhedral in the sense that its intersection with any bounded polyhedron is a polyhedron. This is our basic tool for our investigation of the linear automorphism groups $\G$ of  $C$ which preserve $L$ and possess a natural finiteness property. That property has a number of equivalent formulations, one of which is that there exists a  convex cone spanned by a finite subset of $L\cap \bar C$ whose $\G$-orbit  contains $C$.  This turns out to be of a self-dual nature in the sense that the same is then true for the contragradient action of $\G$ on $V^*$ relative to the  duals of $C$ and $L$. Since the invariant lattice  $L$ is secondary to the resulting $\QQ$-structure $V(\QQ)$ on $V$, we call 
$(V(\QQ),C,\G)$ a \emph{polyhedral triple}.

Examples of polyhedral triples abound and  provide sufficient justification for investigating this situation in its own right, even if  the motivation lies elsewhere (more on this  at the end of this introduction). First of all there is the case  when $C$ is a self-dual homogeneous cone and $\G$ is an arithmetic subgroup of the automorphism group of $C$ (the most classical instances of which are perhaps the Lobatchevski cones and the  cone of positive definite quadratic forms in a fixed number of variables). This is the case studied in detail by Avner Ash in \cite{amrt} and indeed, much of the present paper generalizes his work to our setting.

Another class of examples constitute the cones attached to irreducible Coxeter groups  that are neither finite nor of affine type (these are indeed nondegenerate convex). Related to this class of are the hyperbolic reflection groups studied by Vinberg and Nikulin.
A number of examples occur in algebraic geometry by taking for $V$ the N\'eron-Severi space
of a projective manifold, for $C$ its ample cone (or dually, the cone spanned be the homology classes of curves) and for $\G$ the image of the representation of  automorphism group of the manifold. 

The first section recalls (or discusses) some of the basics of the theory of convex sets.
In Section 2 we prove the fundamental theorem stated above. Its first applications are in 
Section 3. The following section introduces the central notion of this paper 
(that of a polyhedral triple) and we derive a number of properties  of the corresponding group.
In Section 5  we determine the structure of a stabilizer of a face, with most of our results being  summed up  by  Theorem \ref{thm:stabilizerfacemain}.
\\

This paper intends to be the first installment  of a series on semi-toric compactification.
It is a `spin-off' of my ancient unpublished preprint \cite{looij:semitoric} to the extent
that it  is only about the geometry of discrete groups acting  on convex cones, 
the complex-analytic story being relegated to sequels. Yet I  believe that  
this material forms a natural whole and that the results have an interest of their own, independent of the motivating application originally envisaged (namely to develop a common generalization of the compactifications of Baily-Borel and Mumford \emph{et al.}\  \cite{amrt} of locally symmetric varieties).
\\

\emph{Acknowledgements.}
I am indebted to Burt Totaro for a meticulous reading of an earlier version of this paper. He pointed out several inaccuracies and often 
suggested ways to overcome them.  His comments also helped to make this version more reader friendly. (In the mean time
he has applied some of the results presented here \cite{totaro}.) I thank Ofer Gabber for a suggestion made long ago for a proof of Lemma \ref{lemma:unip}.
\\

\emph{A notational convention.} If a group $\G$ acts on a set $X$ and $Y\subset X$, then we denote by $N_\G (Y)$ resp.\  $Z_\G (Y)$ the subgroup of $\g\in \G$ which leave $Y$ invariant resp.\ pointwise fixed so that $\G (Y):=N_\G (Y)/Z_\G (Y)$ can be understood as a group of permutations of $Y$.

\section{Convex Cones and Kernels}\label{section:convcones}

We begin with recalling some definitions that are 
standard in the theory of convex sets.
A subset $C$ of a real finite dimensional vector space $V$
is called a \emph{cone} if it is nonempty and
invariant under scalar multiplication with positive
numbers (so $C$ need not contain the origin). The set of
linear forms on $V$ that are $\geq 0$ on $C$ is 
a closed convex cone in the dual vector space $V^*$,  called 
the \emph{cone dual to $C$} and denoted $C^*$. 
The interior of this dual, to which we 
shall refer as the \emph{open dual} of $C$, will be denoted by $C^\circ$. It is set of
linear forms whose zero set meets the closure of $C$ in the origin only.
For a closed convex cone $C$ in $V$, we have $C^{**}=C$:  according to
Corollary 11.7.1 of \cite{rock} $C$ is the intersection of the half spaces which contain it
(if this collection of half spaces is empty, then this intersection  is $V$ by definition).

Let $A$ be a finite dimensional real affine space.
Given a convex subset $X\subset A$, then  the \emph{relative interior} of $X$ is the
interior of $X$ in its affine span; we denote it by $\mathring X$ (other authors use $\text{ri}(X)$). 
If this happens to be $X$, then we say that $X$ 
is \emph{relatively open}. 
A \emph{face} of $X$ is a nonempty subset $F$ of $X$ with
the property that every segment in $X$ which meets $F$ is
either contained in $F$ or meets $F$ in an end point.
A face is closed in $X$. Any intersection of faces of $X$ is a face of $X$
and the relative interiors of the faces decompose $X$.
If $f:X\to \RR$ is the restriction of an affine-linear function,
then the set of points of $X$ where $f$ assumes its
infimum is a face of $X$, but not every face of $X$ is
necessarily of that form. A face that can be so obtained 
is called \emph{exposed}. 
A point of $X$ that makes up a face resp.\ an exposed face
by itself is also called an \emph{extreme} resp.\ \emph{exposed point} 
of $X$. The \emph{star} of a face $F$ of $X$, $\Star_X(F)$, is the collection 
of the faces of $X$ that contain $F$. The union of the relative interiors
of theses faces will be denoted $|\Star_X(F)|$. In case $\Star_X(F)$ is finite,
$|\Star_X(F)|$ will be open in $X$, but this need not be so in general.
We say that $X$ is \emph{nondegenerate} if it
does not contain an affine line.

We denote by $\Tr(A)$ the vector space of translations of the affine space $A$
and extend the range of  that notation as follows. 
If $X$ and $Y$ are subsets $A$, then we denote by
$\Tr (X,Y)\subset \Tr (A)$ the set of translations that take $X$ to $Y$. 
In the special case when $X=Y$ is convex, then $\Tr (X,Y)$ is a convex cone (if 
$t\in \Tr (A)$  is such that $t+X\subset X$ and  $\lambda>0$, then choose
an integer $n\ge \lambda$; we have $nt+X\subset X$ and then also
$\lambda t+X\subset X$ by convexity), called the \emph{recession cone} of $X$, and  denoted
$\Tr (X)$ instead (Rockafellar \cite{rock} writes $0^+X$). The recession cone of a \emph{bounded} convex set
is clearly reduced to the origin. According to \cite{rock}, Thm. 8.4,
the converse holds for any convex subset that is closed (and hence also when it is relatively open). In that case $\Tr (X)$ has a simple  geometric  interpretation: if we compactify $A$ to a topological ball  by adding as  boundary the topological sphere  of directions in $\Tr (A)$, then the  closure of a closed convex $X\subset A$ intersects that boundary sphere in the subset defined by the directions in $\Tr (X)$. We can also express this as follows: if
$\tilde T(A)$ denotes the dual of the space of affine-linear functions on $A$, so that $A$ is naturally embedded in  $\tilde T(A)$ as an affine  hyperplane and $T(A)$ is embedded in $\tilde T(A)$ as  the linear hyperplane parallel to $A$, then the closure of the cone spanned by $X$ meets $T(A)$ in $T(X)$.

The following related notion is also useful.

\begin{definition} 
Let $X$ be a subset of a  finite dimensional real affine space $A$. Then the 
\emph{asymptotic space} of $X$ is the intersection of all the 
linear subspaces $W\subset \Tr (A)$ for which the image of $X$ in the affine quotient $A/W$ 
is bounded. 
We denote this subspace $\as(X)$.
\end{definition}

In other words, $\as (X)$ is the smallest linear subspace $W\subset \Tr (A)$ with the property
that the image of $X$ in $A/W$ is bounded.
Notice that we can also characterize $\as (A)$ as the common zero set of the 
linear parts  of the affine-linear $\RR$-valued functions 
on $A$ whose restriction to $X$ is bounded.

It is clear that $\as(X)$ contains $\Tr (X)$. But the latter need not span
the former. For instance, if $X$ is the solid parabola in $\RR^2$ defined by $y>x^2$,
then $\Tr (X)$ is the nonnegative $y$-axis, whereas $\as(X)=\RR^2$

We denote the convex hull of a subset $Z$ of an affine space by $[Z]$.  

\medskip
In the remainder of this section, $V$
is a finite dimensional vector space and $C$ an open
nondegenerate convex cone in $V$. Then $C^\circ$ is 
nondegenerate as well and we have $(C^\circ)^\circ=C$.

We first note that if $F$ is a face of $\bar C$, then 
the elements of $C^*$ that  vanish on $F$ make up a face  $F^\dagger$ of $C^*$. In fact, this is an exposed face of $C^*$: if $x\in\mathring F$, then if $\xi\in C^*$ (in other words, $\xi |C\ge 0$) is such that $\xi (x)=0$, then we must have $\xi |F=0$ and so $\xi \in F^\dagger$. This shows that $F^\dagger$ is exposed by $x$.

Let us  denote the annihilator of $F^\dagger$ in $V$ by $V^F$. So this is the common zero set of the $\phi\in C^*$ which vanish on $F$. Clearly, $F\subset V^F$, and the projection $\pi: V\to V/V^F$ is dual to the inclusion of the linear span of $F^\dagger$ in $V^*$. Under this duality, $\pi C$ can be identified with the open dual of $F^\dagger$. 
In particular, $\pi C$ is an open nondegenerate convex cone in $V/V^F$. It may be characterized as the biggest projection of $C$ that is a nondegenerate convex cone and has $F$ in the preimage of its vertex. 

In Section \ref{section:ratsets} we shall need the following lemma.

\begin{lemma}\label{lemma:recrec} 
Let $W\subset V$ be the asymptotic space of some subset of $\bar C$. Then for every nonempty compact  $B\subset C$, $W$ is the
asymptotic space of $\Tr (B, C)\cap W$; in other words, any $\phi\in W^*$ which is bounded on $\Tr (B, C)\cap W$ must be zero. 
\end{lemma}

The proof needs:

\begin{lemma}\label{lemma:ray} 
Let $F$ be a face of $\bar C$ and $q\in \mathring{F}$. Then for every compact $B\subset C+V^F$ there exists a
$\lambda > 0$ such that $B+\lambda q\in C$.
\end{lemma}

\begin{proof} 
We show that for every $p\in C+V^F$ there exists a $\lambda_p>0$ such that $p+\lambda_p q\in C$. This suffices, for then 
$p$ has a neighborhood $B_p$ in $V$ with the property that $B_p+\lambda_p q\subset C$ and the lemma follows from
the compactness of $B$.
So suppose that for a given $p\in C+V^F$  no such $\lambda $ exists. Then
$(p+\RR_{\ge 0} q)\cap C=\emptyset $. Now notice that  whenever $p'\in (p+\RR q)\cap \bar C$,
then $p'+\RR_{\ge 0} q\subset \bar C+\bar C=\bar C$. So  either $(p+\RR_{\ge 0} q)\cap \bar C$ is empty or is a ray  contained in $p+\RR q$.  In both cases there exists by  Thm.\ 11.6 of \cite{rock} a linear form $\xi$ on $V$
which is positive on $C$ and nonpositive on a ray in $p+\RR q$. This implies that $\xi (q)=0$ (and hence $\xi (p)\leq 0$).  Since $\xi |C>0$,
we have $\xi |V^F=0$, and hence $\xi |C+V^F>0$. But this contradicts the fact that $p\in C+V^F$ and $\xi (p)\leq 0$.
\end{proof}

\begin{proof}[Proof of \ref{lemma:recrec}]
We prove this with induction on $\dim V$. Choose a convex subset $P\subset \bar C$  such that $W=\as (P)$. We have $T(P)\subset\bar C$. Let $F$ be the (unique) face of $\bar C$ whose relative interior contains the relative interior  of $T(P)$. 
If $F=\{ 0\}$, then $T(P)=\{ 0\}$. This implies that $P$ is bounded and so $W=V$ and the lemma  clearly holds in that case. We therefore assume $F\not=\{ 0\}$.  Denote by $\pi :V\to V/V^F$ be the projection and observe that $\dim (V/V^F)<\dim V$. 
Suppose $\phi\in W^*$ is bounded on $\Tr (B, C)\cap W$. We must show that $\phi$ is zero.

First we prove that 
$\phi$  factors through a linear form $\phi':\pi W\to \RR$. Choose $x$ in the relative interior of $\Tr (P)$.
Then $x\in \bar C\cap W$ and so $\RR_{\ge 0}x\subset \Tr (B, C)\cap W$. Since $\phi$ is bounded on $\Tr (B, C)\cap W$, it follows that 
$\phi (x)=0$.
Following  Lemma \ref{lemma:ray} there exists for every $u\in V^F\cap W$ a $\lambda>0$ such that $\lambda x+u+B\subset C$, i.e.,  $\lambda x+u\in  \Tr (B, C)\cap W$. Since $\phi (u)=\phi (\lambda x+u)$ it follows that $\phi$ is bounded  on the linear subspace $V^F\cap W$. So $\phi |V^F\cap W=0$ and $\phi$ factors through $\pi W$.

Observe that $\pi W$ is the asymptotic subspace of  a subset of $\pi \bar C$: a linear form is bounded on $\pi P$ if and only if its pull-back to $V$ is bounded on $P$. We claim that 
\[
\pi (\Tr (B,C)\cap W)=\Tr (\pi B, \pi C)\cap \pi W.
\]
The  inclusion $\subset$ is clear, so let us prove $\supset$. Suppose $w\in W$ is such that $\pi (w)\in \Tr (\pi B, \pi C)$. This means
that $w+B\subset C+V^F$. According to Lemma \ref{lemma:ray} there exists a $u\in W\cap V^F$ such that $u+w+B\subset C$ and so $u+w\in \Tr (B,C)\cap W$. Since $\pi (w)=\pi (u+w)$, the claim follows.

Our induction hypothesis can now be applied to $\pi C$, $\pi W$ and $\pi B$ and this enables us to  conclude that $\phi'=0$. 
\end{proof}

\begin{definition}
A \emph{kernel} for $C$ is a nonempty
convex subset $K$ of $\bar C$ with $0\notin \bar K$
and $K+C\subset K$. (N.B. Our definition differs
slightly from the one of Ash \cite{amrt}, in that he does
not insist that $K$ be convex.) Two kernels $K_1$
and $K_2$ for $C$ are said to be \emph{comparable} if
$\lambda K_1\subset K_2\subset \lambda ^{-1}K_1$ for
some $\lambda >0$. 
\end{definition}

It is clear that comparability is an equivalence relation.

For any subset $K$ of $V$ we let $K^{\v}$ denote
the set of $\xi \in V^*$ with $\xi |K\geq 1$. If
$K$ is a kernel for $C$, then it is easy to see that
$K^{\v}$ is a closed kernel for $C^{\circ }$.
Furthermore, if $K_1$ and $K_2$ are comparable
kernels, then so are $K^{\v}_1$ and $K^{\v}_2$.

\begin{lemma} 
If $K$ is a kernel for $C$,
then $K^{\v\v}=\bar K$.
\end{lemma}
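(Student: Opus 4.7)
The inclusion $\bar K\subseteq K^{\vee\vee}$ is immediate: $K\subseteq K^{\vee\vee}$ by the definition of $K^\vee$, and $K^{\vee\vee}$ is closed as an intersection of closed half-spaces. For the reverse, my plan is to take $v\in K^{\vee\vee}$ and derive $v\in\bar K$ via two applications of Hahn--Banach.

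First I would show $v\in\bar C$. Since $0\notin\bar K$, strict separation of $\{0\}$ from the closed convex set $\bar K$ yields (after rescaling) an element $\xi_0\in K^\vee$. For any $\phi\in C^*$ we have $\phi|_K\ge 0$ because $K\subseteq\bar C$, and consequently $\xi_0+t\phi\in K^\vee$ for every $t\ge 0$. Applying $v\in K^{\vee\vee}$ to this family gives $\xi_0(v)+t\phi(v)\ge 1$ for all $t\ge 0$; letting $t\to\infty$ forces $\phi(v)\ge 0$. Since $\phi\in C^*$ was arbitrary, $v\in(C^*)^*=\bar C$.

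Suppose now for contradiction that $v\notin\bar K$. Strict separation of $\{v\}$ from $\bar K$ furnishes $\eta\in V^*$ with $\eta(v)<\alpha:=\inf_{\bar K}\eta\in\RR$. The relation $K+C\subseteq K$ forces $\eta\in C^*$: otherwise some $u\in C$ would satisfy $\eta(u)<0$, and along the ray $k+\RR_{\ge 0}u\subseteq K$ we would get $\eta(k+tu)\to -\infty$, contradicting $\inf_K\eta\ge\alpha$. Combined with $v\in\bar C$ this gives $\eta(v)\ge 0$, whence $\alpha>0$. Then $\eta/\alpha\in K^\vee$ and $(\eta/\alpha)(v)<1$, contradicting $v\in K^{\vee\vee}$.

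The delicate point, and the reason for the preliminary reduction to $v\in\bar C$, is that a direct separation of $v$ from $\bar K$ can produce $\alpha\le 0$; then the separating functional cannot be rescaled into a member of $K^\vee$. Confining $v$ to $\bar C$ beforehand ensures $\eta(v)\ge 0$ and hence $\alpha>0$, removing this obstruction.
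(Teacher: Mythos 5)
Your proof is correct, and it is a mild reorganization of the same Hahn--Banach strategy the paper uses. Both arguments turn on the same auxiliary functional $\xi_0$ obtained by separating $0$ from $\bar K$, and both exploit the fact that adding to $\xi_0$ a nonnegative multiple of any $\phi\in C^*$ keeps it inside $K^\vee$ (because $K\subset\bar C$). The paper deploys this observation as a case split on the value of $\inf_K\xi$ for the separating functional $\xi$: if $\inf_K\xi>0$ it normalizes directly, and if $\inf_K\xi=0$ it perturbs by $\xi_0$ to make the infimum positive. You instead run the $\xi_0+t\phi$ trick over all $\phi\in C^*$ up front, deducing $v\in(C^*)^*=\bar C$, which then guarantees that any separating functional $\eta$ for $v$ and $\bar K$ (which must lie in $C^*$ because $K+C\subset K$) satisfies $\eta(v)\ge 0$, so $\inf_{\bar K}\eta>0$ automatically and the case analysis disappears. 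The net content is the same; your version makes the geometric reason for the dichotomy explicit (the only obstruction to normalizing the separating functional is the possibility that $v$ lies outside $\bar C$), at the modest cost of invoking the bipolar identity $(C^*)^*=\bar C$, which the paper's argument sidesteps.
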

\begin{proof} Although this is essentially \cite{rock} 
II.5.2 Prop.1, we give a proof for completeness. Let us prove
the nontrivial inclusion $K^{\v\v}\subset \bar K$.  If
$x\notin \bar K$, then there exists by the
separating hyperplane theorem \cite{rock}, Thm. 11.5, a $\xi
\in V^*$ with $\xi (x) < \inf_K \xi $. It is clear that then
$\inf \xi | {\bar C} \geq 0$ (and hence $\inf \xi |K  \geq 0$). If
$\inf \xi|K >0$, then normalize $\xi $ such that
$\inf \xi | K=1$. 
Then $\xi \in K^\v$, and since $\xi (x)<1$, we have
$x\notin K^{\v\v}$. Suppose now $\inf \xi|K =0$
(so that $\xi (x)<0$). The preceding
argument applied to $x=0$ yields a $\xi _0\in V^*$
with $\inf \xi_0|K=1$. Choose $t>-\xi_0(x)/\xi
(x)$. Then $(\xi _0+t\xi )(x)<0$ and $\xi _0+t\xi
\in K^\v$. So in this case $x\notin K^{\v\v}$ also.
\end{proof}

Closed kernels have certain  technical advantages 
over arbitrary ones and for that reason
the following lemma is quite useful. 

\begin{lemma}\label{lemma:facial} 
Let $\Lambda$ be a discrete subset of $\bar C$.
Then the convex set  $[\Lambda ]+\bar C$ is closed in $V$
and every extreme point of $[\Lambda ]+\bar C$ is
exposed and belongs to $\Lambda $. Moreover, every
face of $[\Lambda ]+\bar C$ is of the form $[M]+F$,
where $M\subset \Lambda$ and $F$ is a face of $\bar C$.
\end{lemma}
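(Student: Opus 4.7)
The plan is to handle the three assertions (closedness, exposedness of extremes in $\Lambda$, and the face structure) in sequence, using a fixed auxiliary $\xi_0\in C^\circ$ throughout.

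For closedness of $K:=[\Lambda]+\bar C$, the key inputs are Carathéodory's theorem---every point of $[\Lambda]$ is a convex combination of at most $\dim V+1$ elements of $\Lambda$---and the properness of $\xi_0$ on $\bar C$, which follows because $\xi_0$ vanishes on $\bar C$ only at $0$. Given $p_n\to p$ in $K$, I would write $p_n=\sum_i t_{i,n}\lambda_{i,n}+c_n$ and note that each nonnegative summand $t_{i,n}\xi_0(\lambda_{i,n})$ and $\xi_0(c_n)$ is bounded. Properness yields $c_n\to c^*\in\bar C$ along a subsequence; for each $i$ with $t_i^*:=\lim t_{i,n}>0$, boundedness of $\xi_0(\lambda_{i,n})$ together with discreteness of $\Lambda$ forces $\lambda_{i,n}$ to stabilize at some $\lambda_i^*\in\Lambda$. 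The vanishing-weight tail $\sum_{t_i^*=0}t_{i,n}\lambda_{i,n}$ sits in the convex cone $\bar C$, so its limit does too, and everything combines to $p\in[\Lambda]+\bar C$.

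If $x\in K$ is extreme with $x=\lambda+c$, the identity $x=\tfrac12\lambda+\tfrac12(\lambda+2c)$ expresses $x$ as the midpoint of two points of $K$, forcing $c=0$; then $x\in[\Lambda]$, and discreteness rules out $x$ being a nontrivial convex combination of other points of $\Lambda$, so $x\in\Lambda$. To expose $\{x\}$, I would apply the closedness just proved to $K':=[\Lambda\setminus\{x\}]+\bar C$: the midpoint argument shows $x\notin K'$, strict separation yields $\xi\in C^*$ with $\xi(x)<\inf_{K'}\xi$, and the perturbation $\eta:=\xi+\varepsilon\xi_0$ for small $\varepsilon>0$ lies in $C^\circ$ while preserving $\eta(x)<\eta(\lambda)$ for $\lambda\in\Lambda\setminus\{x\}$. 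Then $\eta(y)>\eta(x)$ for every $y\in K\setminus\{x\}$, because $\eta$ attains its unique minimum on $\Lambda$ at $x$ and $\eta>0$ on $\bar C\setminus\{0\}$.

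For the face statement, given a face $F$ of $K$, set $M:=F\cap\Lambda$ and $F':=\{c\in\bar C:\lambda+c\in F\text{ for some }\lambda\in[M]\}$. For $y=\lambda+c\in F$, the midpoint identity again forces $\lambda,\lambda+2c\in F$; writing $\lambda=\sum t_i\lambda_i$ and invoking the face property for convex combinations puts each $\lambda_i$ in $M$, so $\lambda\in[M]$. Iterating the midpoint identity along $\lambda+2^n c$ shows that the whole ray $\lambda+\RR_{\ge 0}c$ lies in $F$, giving $c\in F'$ and hence $F\subset[M]+F'$. Conversely, for $\lambda_0\in[M]$ and $c\in F'$ witnessed by $\lambda'+c\in F$, the convex combinations $(1-1/s)\lambda_0+(1/s)(\lambda'+sc)\in F$ converge to $\lambda_0+c$ as $s\to\infty$, and closedness of $F$ gives $\lambda_0+c\in F$. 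A parallel midpoint/ray argument applied to $c=\tfrac12(c_1+c_2)\in F'$ with $c_1,c_2\in\bar C$ then verifies that $F'$ is a face of $\bar C$. The main obstacle is the first step: $[\Lambda]$ itself can fail to be closed, and what makes the lemma work is precisely that the vanishing-weight tail is forced into $\bar C$; once closedness is in hand, the remaining parts reduce to standard midpoint, ray, and separation manipulations.
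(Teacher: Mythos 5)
Your proof is correct, but it follows a genuinely different route from the paper. The paper begins with the closure $K$ of $[\Lambda]+\bar C$, shows via a separation argument that every \emph{exposed} point of $K$ lies in $\Lambda$ (using properness of $\phi|\bar C$ and closedness of $\Lambda$ in $\bar C$), then invokes Straszewicz's theorem together with discreteness to upgrade this to all \emph{extreme} points, and finally concludes $K=[\Lambda]+\bar C$ and the face description by citing the representation theorems of Rockafellar (Thms.\ 18.5 and 18.6). Your version instead proves closedness directly and elementarily: Carath\'eodory bounds the number of terms, and a fixed $\xi_0\in C^\circ$ gives the compactness needed to extract stabilizing subsequences, with the vanishing-weight tail absorbed into $\bar C$. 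You then re-derive the other statements by hand (midpoint decompositions, ray arguments, strict separation plus the $\varepsilon\xi_0$ perturbation) rather than by appeal to Rockafellar's structure theorems. What you gain is self-containment and transparency about exactly why the result hinges on the recession cone absorbing escaping mass; what the paper gains is brevity, since Straszewicz plus Thm.\ 18.6 yield the equality $K=[\Lambda]+\bar C$ and the face structure in one stroke. Two small points worth flagging: you should dispose of the degenerate case $\Lambda=\{x\}$ separately (where $K'=\emptyset$ and separation is vacuous, but $\xi_0$ itself exposes $x$), and the quantity $\varepsilon$ in your perturbation step must be chosen smaller than $\bigl(\inf_{K'}\xi-\xi(x)\bigr)/\xi_0(x)$---the inequality $\eta(\lambda)>\eta(x)$ for $\lambda\in\Lambda\setminus\{x\}$ uses $\xi_0(\lambda)\geq 0$ and this bound, not just ``small $\varepsilon$'' in the abstract. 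Both of these are easy to fix and do not affect the soundness of the argument.
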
 

\begin{proof} Let $K$ denote the closure of $[\Lambda ]+\bar
C$. We first show that every exposed
point $p$ of $K$ is in fact in $\Lambda$. By 
definition there exists a $\xi\in V^*$ 
with $\xi (p)<\xi |K-\{ p\}$. Since $p+\bar C\subset K$, it follows
that $\xi$ is positive on $\bar C-\{ 0\}$. This means that
$\xi |\bar C$ is proper. Since $\Lambda$ is closed in $\bar C$, it follows
that $\xi |\Lambda$ has a minimum. This is then also the minimum 
of $\xi |K$, and so we must have  $p\in\Lambda$. 

Following Straszewicz's theorem \cite{rock}, Thm.\ 18.6, the exposed
points of $K$ are dense in the set of extreme points
of $K$. But $\Lambda $ is discrete and so
every extreme point of $K$ must be an exposed point of $K$.
It now follows from \cite{rock}, Thm.\ 18.6, that $K\subset
[\Lambda ]+\bar C$. The last assertion is a
consequence of \cite{rock}, Thm. 18.5. 
\end{proof}

\section{Convex Cones and lattices}\label{section:ratsets}

The main result of this section is Theorem \ref{thm:bdlypol} below, which
may not strike the reader as surprising. Nevertheless we shall see that
it has interesting consequences, such as the 
\emph{Siegel property} \ref{prop:siegel}. 

Before we can state the result alluded to above, we recall  resp.\  introduce 
some terminology pertaining to polyhedra.

\begin{definition}
A \emph{polyhedron} in a real finite dimensional 
affine space is a subset of that can be defined by finitely many affine-linear
nonstrict inequalities  (so of the form $f\le 0$). If the affine space has a $\QQ$-structure
and these affine-linear forms are definable
over $\QQ$, then we call this a \emph{rational} polyhedron. A subset of the 
affine space is said to be \emph{(rationally) locally polyhedral} if its
intersection with every bounded (rational) polyhedron
is a (rational) polyhedron. 
\end{definition}

We note here that every \emph{bounded} polyhedron is the convex hull of a finite set (in some affine 
space), and is rational if we can take that subset to consist of rational points.

\begin{theorem}\label{thm:bdlypol}
Let $V$ be a real finite dimensional vector space, $C\subset V$ an open
nondegenerate convex cone and $L\subset V$
a lattice. Then $[C\cap L]$ is locally rationally polyhedral in 
$V$ (relative to the $\QQ$-structure on $V$ defined by $L$). In particular, 
$[C\cap L]$ is closed in $V$.
\end{theorem}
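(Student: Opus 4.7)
Set $\Lambda:=C\cap L$, a discrete subset of the open cone $C$, and consider $K:=[\Lambda]+\bar C$. By Lemma~\ref{lemma:facial}, $K$ is closed in $V$, each extreme point of $K$ lies in $\Lambda$, and every face of $K$ has the form $[M]+F$ with $M\subset\Lambda$ and $F$ a face of $\bar C$.

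I would first identify $\overline{[\Lambda]}$ with $K$. The inclusion $\overline{[\Lambda]}\subset K$ is immediate. For the reverse, by convexity it suffices to show $\lambda+v\in\overline{[\Lambda]}$ for each $\lambda\in\Lambda$ and $v\in\bar C$. Using density of $\bar C\cap V(\QQ)$ in $\bar C$, pick rational $v_k=\nu_k/N_k\to v$ with $\nu_k\in\bar C\cap L$ and $N_k\in\ZZ_{>0}$; the identity
$$
\lambda+v_k\;=\;\frac{N_k-1}{N_k}\,\lambda\;+\;\frac{1}{N_k}(\lambda+\nu_k)
$$
exhibits $\lambda+v_k$ as a convex combination of the lattice points $\lambda$ and $\lambda+\nu_k$ (the latter in $C\cap L$ because $C+\bar C\subset C$), so $\lambda+v_k\in[\Lambda]$ and the limit yields $\lambda+v\in\overline{[\Lambda]}$.

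I would next argue that every proper face of $K$ is rational. Such a face is supported by a linear form $\phi\in C^*$ attaining its minimum on $K$; since the minimum of a linear form on a closed convex set is realized at an extreme point (when attained at all), it is realized at some $\lambda\in\Lambda$. Suppose $\phi$ were irrational, so that $\ker\phi\cap L=\{0\}$. Then $\phi(\lambda)>0$ for every $\lambda\in\Lambda$, but by choosing lattice points approaching the (irrational) face of $\bar C$ on which $\phi$ vanishes one sees that $\inf_{\Lambda}\phi=0$, contradicting attainment at a lattice point. Therefore $\phi$ is rational, and every face of $K$ lies in a rational affine subspace.

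Finally, let $P$ be a bounded rational polyhedron. Then $K\cap P$ is compact convex; its vertices arise as intersections of the rational (bounded) faces of $K$ with the rational faces of $P$, and there are only finitely many since only finitely many lattice extreme points of $K$ and finitely many bounded faces of $K$ meet $P$. One checks $[\Lambda]\cap P=K\cap P$ by verifying each vertex lies in $[\Lambda]$ --- trivially for lattice vertices, and by writing each other vertex as a convex combination of the lattice extreme points spanning the relevant bounded rational face of $K$. Hence $[\Lambda]\cap P$ is a rational polytope, and since $P$ was an arbitrary bounded rational polyhedron, $[\Lambda]$ is boundedly rationally polyhedral and in particular closed in $V$. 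The main obstacle is the rationality argument for the faces of $K$: even though $\bar C$ may have faces in irrational directions, the discreteness of $\Lambda$ forces an irrational supporting form to fall short of its infimum, so only rational faces of $\bar C$ can actually occur as recessions of faces of $K$.
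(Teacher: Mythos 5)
Your proposal correctly begins from Lemma~\ref{lemma:facial} and correctly identifies $K=[\Lambda]+\bar C$ as the closure of $[\Lambda]$, but it has several genuine gaps, and the route it takes is not a workable substitute for the paper's induction.

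First, establishing $K=\overline{[\Lambda]}$ is strictly weaker than what you need. To prove that $[\Lambda]$ is boundedly polyhedral (and in particular closed) you must show that the closure adds nothing, i.e.\ that the rays $p+R$ with $p\in\Lambda$ and $R$ a ray in $\bar C$ already lie in $[\Lambda]$, not merely in $\overline{[\Lambda]}$. This is exactly the content of Step~1 in the paper, and it relies on Lemma~\ref{lemma:halflinenbhd} (density of an irrational half-line modulo $L$), which does not appear in your argument. Your limiting convex combinations only land you in the closure, so using them to conclude $[\Lambda]\cap P=K\cap P$ at the end is circular.

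Second, the rationality argument for faces is flawed. You write ``Suppose $\phi$ were irrational, so that $\ker\phi\cap L=\{0\}$.'' In dimension $\ge 3$ this implication fails: an irrational hyperplane can still contain lattice points (e.g.\ $\phi(x,y,z)=x+\sqrt2\,y$ has $\ker\phi\supset\ZZ e_3$). Moreover, even in the favourable case, ``by choosing lattice points approaching the (irrational) face of $\bar C$ one sees $\inf_\Lambda\phi=0$'' is precisely the nontrivial density claim of Lemma~\ref{lemma:halflinenbhd} and cannot be taken for granted. (There is also the smaller issue that $K$ may have non-exposed faces, so not every proper face is cut out by a supporting form in the way you posit; Lemma~\ref{lemma:facial} only guarantees exposedness for extreme points.)

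Third, and most seriously, you assert ``there are only finitely many [vertices of $K\cap P$] since only finitely many lattice extreme points of $K$ and finitely many bounded faces of $K$ meet $P$.'' The finiteness of faces of $K$ meeting a given bounded polyhedron is exactly what the theorem asserts and is the crux of the paper's Steps~4--6, proved by induction on $\dim V$ via a projection $\pi:V\to V/W$ along the rational span $W$ of the asymptotic space of an unbounded face. Nothing in your argument supplies this; a priori the bounded faces of $K$ could accumulate near a point of $\partial P$. You also use the claim that every vertex of $K\cap P$ is a convex combination of lattice extreme points of ``the relevant bounded rational face of $K$,'' but a vertex of $K\cap P$ may lie in the relative interior of an \emph{unbounded} face of $K$ (it is produced by slicing that face with a facet of $P$), and then it is not a convex combination of lattice extreme points of a bounded face.

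In short: the missing ingredients are (i) the half-line density lemma, which both forces $[\Lambda]=[\Lambda]+\bar C$ and underlies the rationality of face affine spans, and (ii) an inductive mechanism to control, locally, the family of faces; without these the conclusion does not follow. The paper's proof supplies exactly these two ingredients.
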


It will be convenient to prove a few preparatory results first. 

\begin{lemma}\label{lemma:halflinenbhd}
Let $p_0\in L$, $B$ be a neighborhood of  $p_0$  in $V$ and $R$ a relatively open half line in $V$ that is not contained in  a proper  linear subspace of $V$ defined over $\QQ$.  Then $[(B+R)\cap L]$ is a \nbhd\ of $p_0+R$ in $V$.
\end{lemma}
\begin{proof}
Without loss of generality we may and will assume that $p_0=0$.
It is a well-known fact that the image of a line in $V$ not 
contained  in  a proper linear $\QQ$-subspace of $V$ has dense
image  in $V/L$. The same is true for a ray in such a line, 
such as $R$.

Let $x\in R$. We show that $[(B+R)\cap L]$ is a \nbhd \ of $x$ in $V$, more precisely,
we will find  $p_1,\dots , p_N\in (B+R)\cap L$ such that $x$ is in the interior of the 
convex hull of $p_0=0, p_1,\dots , p_N$.
For this we choose nonempty open subsets $U_1,\dots ,U_N$ of $B+R$ 
such that for every $(u_1,\dots ,u_N)\in
U_1\times \cdots \times U_N$, $x$ is in the
 interior of the convex hull of $u_1,\dots ,u_N$.
Since $\rho (U_i+R)=V/L$, there exist $u_i\in U_i$ and $t_i>0$ with 
$p_i:=u_i+t_ix \in (U_i+R)\cap L$, $i=1,\dots ,N$. 
We can write $x$ as a strictly convex linear combination of $u_1,\dots ,u_N$:
$x=\sum_i\lambda_iu_i$ with all $\lambda_i>0$, $\sum_i\lambda_i=1$.
Then $\sum_i \lambda_ip_i=(1+\sum_i \lambda_it_i)x$, and so we have
$x=\sum_{j=0}^N \mu_j p_j$ with $\mu_j=\lambda_i (1+\sum_i \lambda_it_i)^{-1}$ when $j>0$ and
$\mu_0=(\sum_i \lambda_it_i)(1+\sum_i \lambda_it_i)^{-1}$. 
Hence $p_1,\dots , p_N$ are as desired.
\end{proof}

In what follows we denote by $\rho : V\to V/L$ the obvious map.

\begin{lemma}\label{lemma:rho}
Let $W$  be a subspace of $V$ defined over $\QQ$ and $Y\subset W$  a convex subset  such that $W$ is the smallest 
subspace of $V$ defined over $\QQ$ that contains $\as (Y)$. 
Then the closure of $\rho (Y)$ equals the subtorus $\rho (W)=W/L\cap W$ of $V/L$.
\end{lemma}
\begin{proof}
If $Y$ is bounded, then $\as (Y)=0$ and there is nothing to show. We therefore assume $Y$ unbounded. Without loss of generality we may assume that $Y$ is relatively open  so that $\Tr (Y)\not=0$. Choose $v_1$ in the relative interior of $\Tr (Y)$. Then for every $y\in Y$, $\rho(y+\RR_{\ge 0} v)$  is dense in $\rho(y+\RR v)$ and so $\rho (Y)$ is dense in the image of $Y_1:=Y+\RR v_1$. If $\as(Y)\not=\RR v_1$, we proceed in this manner and we eventually find that $\rho (Y)$ is dense in $\rho (Y+\as (Y))$.  Now $\rho(\as (Y))$ is a connected subgroup of $V/L$ and it is well-known that the closure of such a group must be a subtorus. In the present case, this must be  $W/L\cap W$.
Since $Y\subset W$, it then also  follows that $\rho (Y+\as (Y))$ is dense in $\rho (W)$.
\end{proof}

\begin{proof}[Proof of theorem \ref{thm:bdlypol}]
We prove the theorem with induction on $\dim V$. Since
there is nothing to show when $V=\{ 0\}$, we assume that $\dim V>0$.
The convex set $[C\cap L ]+\bar C$ is closed in $V$ by \ref{lemma:facial} and 
contained in the open cone $C$ (since $C+\bar C\subset C$).
Let $P$ be a face of $[C\cap L ]+\bar C$.

\medskip
\emph{Step 1. $P$ is also a face of $[C\cap L ]$. In particular, the affine span of
$P$ is defined over $\QQ$.}

\begin{proof} According to Lemma \ref{lemma:facial} (applied to $\Lambda:=L\cap C$), we have 
$P=[P\cap L]+F$, with $F$ a face of $\bar C$ and $P\cap L\subset C$. So it is enough to prove that
for every $p\in P\cap L$ and every ray $R$ in $F$, $p+R\subset [C\cap L]$.
Denote by $V_R$  the smallest subspace of $V$ defined over 
$\QQ$ which contains $R$ and let $B$ be a convex \nbhd\ of $0$ in
$V_R$ such that $p+B\subset C$. According to Lemma \ref{lemma:halflinenbhd},  
$[(p+B+R)\cap L]$ contains a neighborhood of $p+R$ in $V_R$.
Since $p+B+R\subset C$ it follows that $p+R\subset [C\cap L]$.
\end{proof} 

Note that if we apply Step 1 to $P=[C\cap L ]+\bar C$, we  find that 
$[C\cap L]=[C\cap L ]+\bar C$ is closed in $V$.

We write  $W$ for the smallest subspace of $V$ defined over $\QQ$
that contains $\as (P)$ and $\pi :V\to V/W$ for  the projection. So $\pi P$ is bounded
and $\pi L$ is a lattice in $V/W$.

\medskip
\emph{Step 2. If $Q$ is face of $[C\cap L]$ which contains $P$,
then $Q=\pi ^{-1}\pi Q\cap [C\cap L ]$ and hence $\pi Q$ is a face of $\pi [C\cap L]$.}

\begin{proof} 
Clearly, the translation space of the affine span $\aff (Q)$ of $Q$  contains $\as (P)$. 
Since the former is defined over $\QQ$, it contains  $W$ as well. This implies that $\pi ^{-1}\pi Q\cap [C\cap L ]$ equals  $\aff (Q) \cap  [C\cap L ]$. Since  $Q$ is closed, the latter is just $Q$. This implies  that $\pi Q$ is a face of $\pi [C\cap L]$  indeed: any segment in $\pi [C\cap L]$ is of the form  $\pi \sigma$ for some segment   $\sigma$ in $[C\cap L]$, and if $\pi\sigma$ meets $\pi Q$, then $\sigma$ meets 
$\pi ^{-1}\pi Q\cap [C\cap L ]=Q$ and we have $\pi\sigma\cap \pi Q=\pi (\sigma\cap Q)$. So
the latter is either $\pi\sigma$ or an end point of $\pi\sigma$. 
\end{proof} 

\emph{Step 3. $[\pi C\cap \pi L]=\pi [C\cap L]$.}
\begin{proof} 
We prove the nontrivial inclusion $\subset$, that is,
we show that if $p\in C$ is such that $(p+W)\cap L$ is nonempty,
then $(p+W)\cap C\cap L$ is nonempty.

Choose a compact convex neighborhood $B$ of $p$ in $(p+W)\cap C$.  By Lemma \ref{lemma:recrec} , $T(B,C)\cap \as (P)$ and $P$ have the same asymptotic space.
Since $T(B,C)\cap \as (P)$ is convex, Lemma \ref{lemma:rho} applies here and tells us that the closure of $\rho (T(B,C)\cap \as (P))$ is $\rho W$. 

Now $B$ contains a nonempty open subset of $p+W$ and hence of $p+W+L=W+L$. 
The same is the true of $-B$ and so $\rho (-B)$ meets $\rho (\Tr (B,C)\cap \as (P))$. In other words, there exist $b\in B$ and 
$v'\in\Tr (B,C)\cap \as (P)$ such that $b+v'\in L$. It is clear that we also have $b+v'\in C\cap (p+W)$. 
\end{proof} 

\emph{Step 4. $\pi C$ is nondegenerate.}
\begin{proof} 
 Suppose not: then $\Tr (\pi C)$ 
contains a line $l\subset V/W$.
Choose $p\in P\cap L$. It follows from Lemma \ref{lemma:halflinenbhd} 
(applied to the two rays in $\pi(p)+l$ emanating from $\pi(p)$) that $\pi (p)+l$ is in the convex hull of $\pi C\cap \pi L$.
By step 3, this convex hull is just $\pi [C\cap L]$. 
Let $y\in l-\{ 0\}$. Since $\pi P$ is bounded,
there exists a $\mu >0$ such that $\pi (p)\pm\mu y\notin \pi P$. 
Let $p_{\pm }\in [C\cap L]$ be such that $\pi (p_{\pm })=\pi (p)\pm \mu
y$. Then $\frac{1}{2}p_-+\frac{1}{2}p_+\in \pi ^{-1}\pi
(p)\cap [C\cap L ]=P$ by step 2, although $p_{\pm
}\notin P$. This contradicts the fact that $P$ is a
face of $[C\cap L]$.     
\end{proof} 

\emph{Step 5. If $P$ is unbounded, then $\Star_{[C\cap L]}(P)$
has only finitely many members and $|\Star(P)_{[C\cap L]}|$ is
a neighborhood of $\mathring P$ in $[C\cap L]$.}

\begin{proof}   Since $P$ is unbounded, $W\not= \{ 0\}$, and so
$\dim V/W <\dim V$. Step 4 enables us to apply our
induction hypothesis to $\pi C$ and $\pi L$. Since
$[\pi C\cap \pi L]=\pi [C\cap L]$ (step 3), we find
that $\pi [C\cap L ]$ is a locally \rp \ subset
of $\, V/W$. So $\Star_{\pi [C\cap L]} (\pi P)$
is finite.  This implies that 
$|\Star_{\pi [C\cap L]} (\pi P)|$ is a neighborhood
of the relative interior of $\pi (P)$ in $\pi ([C\cap L ])$ and that each member of 
$\Star_{\pi [C\cap L]} (\pi P)$ is obtained as in Step 2, i.e., is the image 
of face of $[C\cap L ]$ which contains $P$ and which is also a face of  $[C\cap L ]+\bar C$ .
This implies the corresponding property for $P$ with respect to
$[C\cap L ]$. 
\end{proof} 

\emph{Step 6 (Conclusion).} 
We show that any  bounded  polyhedron $\Pi$ in $V$ meets only 
finitely faces of $[C\cap L  ]$.
Suppose that on the contrary, there exists a sequence
$P_1,P_2,\dots $ of pairwise distinct faces of $[C\cap L ]$
with $P_i\cap \Pi\not= \emptyset$. Clearly,  
$\cup^{\infty }_{i=1}P_i$ cannot be bounded: otherwise 
$(\cup^{\infty }_{i=1}P_i)\cap L$ would be finite, and as
each $P_i$ is the convex hull of its intersection
with $L$, only a finite number of $P_i$'s could be
distinct. This property is of course also true for the union over any subsequence
of $(P_i)_i$. So, perhaps after  passing to a subsequence, we can find
sequences $\{ p_i\in P_i\cap \Pi \} ^{\infty }_{i=1}$
converging to some $p_\infty\in \Pi$ and $\{ q_i \in
\mathring P_i\} ^{\infty }_{i=1}$ such that the intervals $[p_i,q_i]$
converge to a closed half line  emanating from $p_\infty$.
Let $P_\infty$ denote the face of 
$[C\cap L ]$  whose relative interior contains the relative interior
of this half-line.
As $P_{\infty}$ is unbounded, $|\Star_{[C\cap L]}(P_\infty)|$ is by
step 5 a \nbhd \ of $\mathring P_{\infty }$, and so
for $i$ sufficiently large, $q_i\in |\Star(P_\infty)|$ and hence
$P_i\supset P_{\infty }$. According to
step 5 only finitely many faces of $[C\cap L ]$ have
that property, and so we get a contradiction.
\end{proof}

\begin{remark}\label{rem:1}
If it were true that a nondegenerate convex cone which is  the linear projection of a closed nondegenerate cone is also closed, then the above proof  could be simplified. That is however not so, as the following example due to Burt Totaro shows. The function $x\in (-1,1)\mapsto (1-x^2)^{-1}$  is convex and so  the open subset $D\subset \RR^2$ defined by $y>0$ and  $y (1-x^2)> 1$ is convex as well. Note that if we put $D$ in real projective space, then $\partial D$ is smooth except for its unique point at infinity: there it has two distinct tangent lines (given by $x=\pm 1$) which do not meet $\bar D$ outside that point. We now take for $C\subset \RR^3$ the cone over $\bar D$ and project along the axis defined by its point at infinity. 
So $C$  is given by the inequalities $|x|\le z$, $y\ge 0$, $y(z^2-x^2) \ge  z^3$, and we project $C$  under $(x,y,z) \mapsto (x,z)$. Then the image of $C$ is the union of the origin $(0,0)$ and the \emph{open} cone  defined by $|x| < z$. This is a nondegenerate convex cone, but it is not closed.
\end{remark}

\section{The Siegel Property}\label{section:ratcones}

In this section $V$ is a real finite dimensional vector
space with a $\QQ$-structure $V(\QQ)$. A \emph{lattice} in $V$
is always understood to be compatible with this
$\QQ$-structure, in other words, must be a subgroup of $V(\QQ)$ of rank
equal to $\dim V$.

We also fix an open convex nondegenerate cone $C\subset V$.
The convex hull of $\bar C\cap V(\QQ)$ clearly
contains $C$; we denote it by $C_+$. Similarly we
have $C_+^{\circ }\supset C^{\circ }$ in $V^*$. 

We say that a
subset $K\subset V$ is \emph{\lrp\  in $C_+$} if for every
\rp \ cone $\Pi$ in $C_+$, $\Pi \cap K$ is a
rational polyhedron (note that we do not require that $K\subset C_+$). 

\begin{proposition}  
For every lattice $L$, $[C\cap L]$ is a kernel for $C$ 
and all such kernels belong to the same comparability class.
\end{proposition}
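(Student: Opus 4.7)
The plan is to check the four defining conditions for $K:=[C\cap L]$ to be a kernel for $C$: $K$ convex, $K\subset\bar C$, $0\notin \bar K$, and $K+C\subset K$. The first is automatic; since $C$ is convex and $C\cap L\subset C$, one actually obtains $K\subset C\subset\bar C$. For $0\notin \bar K$, I would use Theorem~\ref{thm:bdlypol} to conclude that $K$ is closed, so $\bar K=K$; if $0\in K$, then $0=\sum \lambda_i v_i$ with $v_i\in C\cap L$, $\lambda_i\ge 0$, $\sum\lambda_i=1$ and some $\lambda_{i_0}>0$. Rearranging yields $-v_{i_0}\in\bar C$, which combined with $v_{i_0}\in C\subset \bar C$ and the pointedness (= nondegeneracy) of $\bar C$ forces $v_{i_0}=0$, contradicting $v_{i_0}\in C$.

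The substantive step is $K+C\subset K$; my strategy is to prove the stronger statement $\bar C\subset \Tr(K)$. First I would observe that $C\cap L\subset \Tr(K)$: for any $v\in C\cap L$ and $x=\sum\lambda_iv_i\in K$, the point $v+x=\sum\lambda_i(v+v_i)$ is again a convex combination of elements of $C\cap L$, since each $v+v_i$ lies in $C$ (sum of two elements of an open convex cone) and in $L$. Because $K$ is closed, $\Tr(K)$ is a closed convex cone, so it contains the closed convex cone $\sigma$ generated by $C\cap L$. A density argument identifies $\sigma$ with $\bar C$: given $c\in C$, pick a small open neighbourhood $U\subset C$ of $c$; for $N$ sufficiently large $NU$ contains a lattice point $w_N$, which lies in $C\cap L$, and $w_N/N\in U$ converges to $c$, exhibiting $c$ as a limit of positive multiples of elements of $C\cap L$. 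Passing to closures gives $\bar C\subset \sigma\subset \Tr(K)$, whence $K+C\subset K$.

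For the comparibility assertion, take two lattices $L_1,L_2\subset V(\QQ)$. They are commensurable, so there exist positive integers $n,m$ with $nL_1\subset L_2$ and $mL_2\subset L_1$, and taking convex hulls yields $n[C\cap L_1]\subset [C\cap L_2]$ and $m[C\cap L_2]\subset [C\cap L_1]$. Using $[C\cap L_i]\subset C$, one checks that $\alpha[C\cap L_i]\subset [C\cap L_i]$ for every $\alpha\ge 1$ via $\alpha x = x+(\alpha-1)x\in [C\cap L_i]+C$. Setting $\lambda:=\max(n,m)$ therefore yields $\lambda[C\cap L_1]\subset n[C\cap L_1]\subset [C\cap L_2]$ and $\lambda[C\cap L_2]\subset m[C\cap L_2]\subset [C\cap L_1]$, i.e.\ $\lambda[C\cap L_1]\subset[C\cap L_2]\subset \lambda^{-1}[C\cap L_1]$.

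The real obstacle is the recession-cone computation underlying $K+C\subset K$, which ultimately rests on the density of lattice rays in the open cone $C$. Once that and the closedness of $K$ (from Theorem~\ref{thm:bdlypol}) are in hand, the remaining verifications are largely bookkeeping.
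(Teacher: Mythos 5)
Your proposal is correct, and it supplies in full the verification that the paper dismisses with ``it is easy to see''; the comparibility argument matches the paper's in spirit (commensurability of lattices), differing only in bookkeeping. The paper's own proof is just two lines: it picks a single $k$ with $kL\subset L'\subset\tfrac{1}{k}L$, intersects with $C$ (using $C\cap kL=k(C\cap L)$) and takes convex hulls to get $k[C\cap L]\subset[C\cap L']\subset\tfrac{1}{k}[C\cap L]$ directly; you instead take $\lambda=\max(n,m)$ and pay for the mismatch with the auxiliary observation $\alpha K\subset K$ for $\alpha\ge 1$ --- both are fine, the paper's being marginally cleaner.

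Two small remarks on the kernel verification. Your appeal to Theorem~\ref{thm:bdlypol} to get closedness of $K$ is legitimate given the position of the proposition, but it is heavier machinery than is needed: once you have $\xi\in C^\circ$, $\xi|\bar C$ is proper and $C\cap L$ is a discrete closed subset of $\bar C$ avoiding $0$, so $\xi$ is bounded below by some $\varepsilon>0$ on $C\cap L$ and hence on $K$ and on $\bar K$, giving $0\notin\bar K$ without closedness of $K$. Closedness of $K$ \emph{is} genuinely used in your argument for $K+C\subset K$ (to know that $\Tr(K)$ is closed, so that the density of rational rays in $C$ can be pushed to $\bar C$), and there one could also cite Lemma~\ref{lemma:facial}, which gives $[C\cap L]+\bar C$ closed more directly from discreteness of $C\cap L$. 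Finally, your density step is phrased a bit loosely (you fix $U$ but then claim $w_N/N\to c$); shrinking $U$ with $N$ fixes it, as you no doubt intend.
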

\begin{proof}
It is easy to see that $[C\cap L]$ is a kernel. 
If $L'$ is another lattice, then there exists a positive integer
$k$ such that $kL\subset L'\subset \frac{1}{k}L$. So
$k[C\cap L]\subset [C\cap L']\subset \frac{1}{k}[C\cap L]$.
\end{proof}

\begin{definition}\label{def:core}
A kernel for $C$ is called a \emph{core} if it is comparable with
the convex hull of the intersection of $C$ with some lattice in $V$.
It is called a \emph{cocore} if its dual is a core for $C^\circ$. 
Notice that given a lattice $L$ in $V$, then 
$[C^\circ\cap L^*]^\v$ is a cocore which contains the core $[C\cap L]$.
\end{definition}     

\begin{definition}\label{def:lrp}  
A collection $\Sigma $ of
convex cones in $C_+$ is said to be a \emph{\lrp \
decomposition} of $C_+$ if the following
conditions are fulfilled:
\begin{enumerate}
\item[(i)] the relative interiors of the members of $\Sigma$ are pairwise disjoint and their union is $C_+$,
\item[(ii)] $\Sigma $ is closed under intersections and taking faces,
\item[(iii)] if $\Pi $ is a \rp \ cone in $C_+$, then
$\Sigma |\Pi:=\{ \sigma \cap \Pi\}_{\sigma \in
\Sigma}$ is a finite collection of \rp \ cones.
\end{enumerate}
If moreover every $\sigma \in \Sigma $ is a \rp \
cone, then we omit ``locally'', and call
$\Sigma $ a \emph{\rp \ decomposition} of $C_+$.
\end{definition} 

\begin{remark}
The collection of faces of $C_+$ is obviously a \lrp \
decomposition of $C_+$. It is in fact the coarsest as it is refined by any other
\lrp \ decomposition of $C_+$. 
\end{remark}

\begin{proposition}\label{prop:rpdec}
Let $L\subset V(\QQ)$ be a lattice.
For any face $P$ of $[C^{\circ
}\cap L^*]$, let  $\sigma (P)$ be the set of $x\in
V$ such that $\xi \in [C^{\circ }\cap L^*]
\mapsto \xi (x)$ assumes its infimum on
all of $P$. Then $\sigma (P)$ is a \rp \ cone of dimension equal to
$\codim P$, $P\mapsto \sigma (P)$ is an
injection which reverses inclusions, and 
$\Sigma (C,L):=\{
\sigma (P)\}_P $ is a \rp \ decomposition of $C_+$.
\end{proposition}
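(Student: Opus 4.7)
The key observation is that $K := [C^\circ \cap L^*]$ is a closed kernel for $C^\circ$ and, by Theorem \ref{thm:bdlypol} applied to $(V^*, C^\circ, L^*)$, is boundedly rationally polyhedral in $V^*$; by Lemma \ref{lemma:facial} every face of $K$ has the form $[M] + F$ with $M \subset C^\circ \cap L^*$ and $F$ a face of $C^*$, and $M \neq \emptyset$ since $0 \notin \bar K$. The assignment $P \mapsto \sigma(P)$ is the classical \emph{normal cone} construction along $P$, and the entire proposition amounts to the assertion that the normal fan of $K$ is a rationally polyhedral decomposition of $C_+$. My plan is to derive the four claims in order, treating rational polyhedrality as the main item.

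That $\sigma(P)$ is a convex cone is immediate from linearity of the defining conditions in $x$. To pin down the ambient geometry I would pick a lattice point $\xi_0 \in P \cap L^*$ (nonempty by the above) and let $V_P \subset V^*$ be the direction of the affine span $A_P$; rationality of $A_P$, via Step 1 of the proof of Theorem \ref{thm:bdlypol}, makes $V_P^\perp \subset V$ a rational linear subspace. A brief unwinding of the definition yields
$$\sigma(P) = V_P^\perp \cap \{x \in V : \xi(x) \geq \xi_0(x) \text{ for all } \xi \in K\},$$
and the bracketed set is precisely the polar $(T_{\xi_0} K)^\vee$ of the closed conical hull $T_{\xi_0} K$ of $K - \xi_0$. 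The crucial reduction is that $T_{\xi_0} K$ coincides with the tangent cone at $\xi_0$ of $K \cap \Pi$ for any bounded rational polyhedron $\Pi$ with $\xi_0 \in \inter \Pi$, since $K$ agrees with $K \cap \Pi$ in a neighborhood of $\xi_0$ (any ray from $\xi_0$ into $K$ can be truncated into $\Pi$ by convexity). By Theorem \ref{thm:bdlypol} $K \cap \Pi$ is a rational polyhedron, so its tangent cone at the rational point $\xi_0$ is rational polyhedral; taking polars and intersecting with $V_P^\perp$ exhibits $\sigma(P)$ as a rational polyhedral cone. The \textbf{main obstacle} is exactly this polyhedrality step: even though $K$ may have infinitely many extreme points and $\bar C$ need not be polyhedral, the closed conical hull of $K - \xi_0$ is captured by the local polyhedral structure of $K$ near $\xi_0$, and this local reduction is the essential use of boundedly rational polyhedrality.

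Once $\sigma(P)$ is known to be rational polyhedral, the dimension count $\dim \sigma(P) = \codim P$ follows from $\sigma(P) \subseteq V_P^\perp$ (giving $\leq$) together with a standard supporting-hyperplane argument at $P$ producing an $x$ in the relative interior of $\sigma(P)$ whose inf-attaining face on $K$ is exactly $P$ (giving equality). Inclusion-reversal $P_1 \subseteq P_2 \Rightarrow \sigma(P_2) \subseteq \sigma(P_1)$ is immediate, and injectivity follows by recovering $P$ as the inf-attaining face $\{\xi \in K : \xi(x) = \inf_{\eta \in K} \eta(x)\}$ of any $x$ in the relative interior of $\sigma(P)$. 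For the three axioms of Definition \ref{def:lrp}: (i) every $x \in C_+$ lies in the relative interior of $\sigma(P_x)$, where $P_x$ is the inf-attaining face of $\xi \mapsto \xi(x)$ on $K$---this infimum being finite and attained because the functional is proper on $K$, using $0 \notin \bar K$ and $x \in \bar C$; (ii) is the standard identity $\sigma(P_1) \cap \sigma(P_2) = \sigma(Q)$ for $Q$ the smallest face of $K$ containing $P_1 \cup P_2$, together with the correspondence between faces of $\sigma(P)$ and faces of $K$ containing $P$; (iii) local finiteness for rational polyhedral $\Pi \subset C_+$ follows because only finitely many inf-attaining faces arise among points of $\Pi$, a compactness argument reducing to the bounded rational polyhedrality of $K \cap \Pi'$ for a suitable bounded rational $\Pi' \subset V^*$ containing all the infima.
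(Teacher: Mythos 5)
Your normal-fan presentation of $\sigma(P)$, via the identity $\sigma(P)=V_P^\perp\cap(T_{\xi_0}K)^\vee$ and the observation that $T_{\xi_0}K$ agrees with the tangent cone of $K\cap\Pi$ for any bounded rational polyhedron $\Pi\ni\xi_0$, is a good write-out of the ``straightforward'' part that the paper leaves implicit, and the rest of the combinatorial bookkeeping (dimension count, injectivity, inclusion reversal) is fine. The trouble is with the two properties of a \rp\ decomposition that the paper actually takes the space to prove.

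For property (i) you claim the infimum $\inf_{\xi\in K}\xi(x)$ is attained because $\xi\mapsto\xi(x)$ is \emph{proper} on $K$, ``using $0\notin\bar K$ and $x\in\bar C$.'' This is false when $x\in C_+$ lies on the boundary of $\bar C$: if $x\in\dot F$ for a proper face $F$ of $C_+$ and $F^\d\cap K$ is nonempty and unbounded (it typically is), then the sublevel set $\{\xi\in K:\xi(x)\le c\}$ is unbounded, so the functional is not proper on $K$. (Already for $V=\RR^2$, $C$ the open positive quadrant, $L=\ZZ^2$, $K=[C^\circ\cap L^*]=\{(a,b):a,b\ge 1\}$ and $x=(1,0)$, the sublevel set $\{\xi\in K:\xi_1\le 2\}$ is unbounded.) The attainment nonetheless holds, but for an arithmetic reason that the paper spells out: for $\lambda\in\bar C\cap L$ the set $\{\xi(\lambda):\xi\in C^\circ\cap L^*\}$ lies in $\ZZ_{\ge 0}$, and for a finite nonnegative combination $x$ of such $\lambda$'s the set $\{\xi(x):\xi\in C^\circ\cap L^*\}$ is a discrete subset of $\RR_{\ge 0}$, hence achieves its minimum. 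Your properness claim must be replaced by this discreteness argument.

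For property (iii) you appeal to ``a compactness argument reducing to the bounded rational polyhedrality of $K\cap\Pi'$ for a suitable bounded rational $\Pi'$ containing all the infima.'' As stated this is not a proof; it is the statement that the argument works. In particular, for $x$ ranging over boundary rays of $\Pi$ (those contained in $\partial C\cap C_+$), the inf-attaining faces $P_x$ can be unbounded and the functional is not proper there, so the naive compactness argument that works for $x\in\Pi\cap C$ does not obviously extend; you would at least have to argue that the \emph{extreme points} of those $P_x$ stay in a bounded region. The paper sidesteps all of this cleanly by invoking Proposition \ref{prop:cocore}: $\Pi\cap K^\vee$ is a rational polyhedron (this is precisely what Lemma \ref{lemma:finitepol}, applied with $\Phi=C^\circ\cap L^*$, delivers once one notes that the evaluations $\xi(p)$ for $p$ in a bounded rational polyhedron and $\xi\in C^\circ\cap L^*$ land in a finitely generated sub-semigroup of $\RR_{\ge 0}$), and each $\sigma(P)\cap\Pi$ is the cone over a face of $\Pi\cap K^\vee$ or is $\{0\}$; finiteness is then immediate from the finiteness of faces of a polyhedron. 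You should replace your compactness sketch with this argument, or supply the missing boundary analysis.
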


Before we begin the proof we show:

\begin{proposition}\label{prop:cocore} 
If $L$ is a lattice in $V(\QQ)$, then $[C^{\circ }\cap
L^*]^\v$ is a \lrp \ cocore for $C$.
\end{proposition}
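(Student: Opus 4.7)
The plan is to establish the cocore property via a short duality computation that uses Theorem \ref{thm:bdlypol}, and then to handle local rational polyhedrality on an arbitrary rationally polyhedral cone $\Pi \subset C_+$ by reducing an a priori infinite system of linear inequalities to a finite one through Dickson's lemma.

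For the cocore claim, I would start by applying the previous proposition to $C^\circ$ with the lattice $L^*$, which makes $[C^\circ \cap L^*]$ a core, and in particular a kernel, for $C^\circ$; hence $[C^\circ \cap L^*]^\vee$ is automatically a closed kernel for $C$. To upgrade ``kernel'' to ``cocore'' I need its dual to be a core for $C^\circ$. Theorem \ref{thm:bdlypol} applied to $C^\circ$ and $L^*$ shows that $[C^\circ \cap L^*]$ is already closed in $V^*$, so the double-dual lemma gives $([C^\circ \cap L^*]^\vee)^\vee = [C^\circ \cap L^*]$, which is indeed a core for $C^\circ$.

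For local rational polyhedrality, fix a rationally polyhedral cone $\Pi \subset C_+$ and generators $v_1, \dots, v_n \in L \cap \bar C$ (available after clearing denominators). Writing a point of $\Pi$ as $x = \sum_i t_i v_i$ with $t_i \geq 0$, membership $x \in [C^\circ \cap L^*]^\vee$ is equivalent to $\xi(x) \geq 1$ for every $\xi \in C^\circ \cap L^*$, since the infimum of the linear form $\xi \mapsto \xi(x)$ over a convex hull is attained on the generating set. In coordinates this reads $\sum_i t_i \, \xi(v_i) \geq 1$ for every $\xi \in C^\circ \cap L^*$. Setting $S := \{(\xi(v_1), \dots, \xi(v_n)) : \xi \in C^\circ \cap L^*\} \subset \ZZ_{\geq 0}^n$ (integrality and non-negativity come from $v_i \in L \cap \bar C$ and $\xi \in L^* \cap \overline{C^\circ}$), non-negativity of the $t_i$ makes the left-hand side componentwise monotone in $(\xi(v_i))_i$, so the infinite system reduces to the subsystem indexed by the componentwise-minimal elements $S_{\min}$ of $S$.

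The crux is the finiteness of $S_{\min}$, which is exactly Dickson's lemma --- this is where discreteness of $L^*$ meets the polyhedral structure, and is the step I expect to be the main (though standard) technical input. Once it is in hand, the preimage of $\Pi \cap [C^\circ \cap L^*]^\vee$ in $\RR_{\geq 0}^n$ under the rational linear surjection $(t_i) \mapsto \sum_i t_i v_i$ is the rational polyhedron cut out by $t_i \geq 0$ together with finitely many inequalities $\sum_i t_i m_i \geq 1$ indexed by $m \in S_{\min}$; its image is $\Pi \cap [C^\circ \cap L^*]^\vee$, and since rational linear maps send rational polyhedra to rational polyhedra, this intersection is itself a rational polyhedron, as required.
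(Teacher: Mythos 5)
Your proof is correct and follows essentially the same route as the paper: the combinatorial heart of both is Dickson's lemma, which in the paper is packaged as the ``inductive argument'' inside Lemma~\ref{lemma:finitepol} (applied there to the set $\Xi\subset\RR_{\geq 0}^l$ of evaluation vectors), and which you invoke directly for $S\subset\ZZ_{\geq 0}^n$. The only structural difference is that you inline a specialized version of Lemma~\ref{lemma:finitepol} rather than cite the general statement (which the paper reuses for Proposition~\ref{prop:lrpkernel}), and you spell out the routine double-duality step showing $[C^\circ\cap L^*]^\vee$ is a cocore, which the paper treats as already observed in Definition~\ref{def:core}.
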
 

This will be a consequence of the following result 
(which we shall later need it in this general form
for the proof of Proposition \ref{prop:lrpkernel}):

\begin{lemma}\label{lemma:finitepol} 
Let $A$ be a real finite dimensional affine space, $P$ a
polyhedron in $A$, and $\Phi$ a collection of affine-linear
functions on $A$ that are $\ge 0$ on $P$ and such that for every $p\in P$ and every $t\in \Tr (P)$, 
the sets  $\{ \phi (p)\}_{\phi \in \Phi}$ and $\{ d\phi (t)\}_{\phi \in \Phi}$ are discrete. For any finite subset $S\subset\Phi$, denote by
$P_S$ the set of $p\in P$ such that all $\phi\in S$
assume in $p$ the same value and no member of $\Phi$
takes in $p$ a smaller value.
Then $\{ P_S\}_{S\subset \Phi \text{ finite}} $ is a finite polyhedral
decomposition of $P$ and $P_{\Phi\ge 1}:=\{ p\in P\, |\, \phi(p)\geq 1 \text{ for
all $\Phi$}\} $ is a polyhedron.

If $A$ is endowed with a $\QQ$-structure and relative to this structure, $P$ is a rational polyhedron and the members of $\Phi$ are defined over $\QQ$, then the $P_S$ and $P_{\Phi\ge 1}$ are rational polyhedra.
\end{lemma} 

\begin{proof}
Let $p_1,\dots ,p_k$ enumerate the (finite) set of 
extreme points of $P$, and let $t_{k+1},\dots ,t_l\in
\Tr (P)$ generate $\Tr (P)$ as a cone. Consider
the subset  of $\RR^l$ defined by 
\[
\Xi:=\{ (\phi (p_1),\dots ,\phi (p_k),d\phi (t_{k+1}), \dots d\phi (t_l))\,
|\, \phi \in \Phi\} .
\]
By assumption the projection of $\Xi $ on every coordinate is
discrete and contained in $\RR_{\geq 0}$. An inductive argument
shows that there exists a finite subset $\Xi_0$ of
$\Xi $ such that $\Xi \subseteq \Xi _0+\RR_{\geq 0}^l$. So if
$\Phi_0$ is a finite subset of $\Phi$ which maps onto 
$\Xi _0$, then, for every $\phi \in \Phi$ there exists a
$\phi _0\in \Phi_0$ such that for all $i,j$, $\phi (p_i)\geq \phi _0(p_i)$ and
$d\phi (t_j)\geq d\phi _0(t_j)$. In other words $\phi \geq \phi _0$ on
$P$. Hence every nonempty $P_S$ is obtained by taking 
$S\subset \Phi_0$, and all such cover $P$. The set of $p\in P$
with  $\phi(p)\geq 1$ for all $\phi \in \Phi$ is already defined
by restricting the index set to $\Phi_0$ and is therefore a polyhedron.

The $\QQ$-version is then straightforward.
\end{proof}

\begin{proof}[Proof of Proposition \ref{prop:cocore}]
Let $\Pi \subset C_+$ be a rationally  polyhedral cone in $C_+$.
We must show that $\Pi \cap [C^{\circ }\cap L^*]^\v$, that is, the locus
of $p\in \Pi$ with $\xi (p)\ge 1$ for all $\xi\in C^\circ\cap L^*$, is rationally polyhedral.
It is enough to show that the
($\QQ$-version of) Lemma \ref{lemma:finitepol} applies here with $P=\Pi$ and $\Phi=C^\circ\cap L^*$. If $p\in C\cap L$,
then clearly the set of $\{ \xi (q)\}_{\xi\in C^\circ\cap L}$ is a
set of positive integers. So if $p$ is a convex linear combination of such
$q$, then $\{ \xi (p)\}_{\xi\in C^\circ\cap L}$ is in a
semigroup of $\RR_{\ge 0}$ which is finitely generated as such. Hence it is discrete
as a subset and bounded from below.
\end{proof}

\begin{proof}[Proof of Proposition \ref{prop:rpdec}]
We only prove the last part of the  statement, for
everything else follows in a straightforward manner
from \ref{thm:bdlypol}.
For $p\in \bar C\cap L$, the set 
$\{ \xi (p)\}_{ \xi
\in C^\circ\cap L^*}$ obviously consists of positive integers.
So for $x\in [\bar C\cap L]=C_+$, 
$\{ \xi (x)\}_{\xi \in C^\circ\cap L^*}$ is still a
discrete subset of $\RR_{\geq 0}$. This implies that the function 
$\xi \in [ C^{\circ }\cap L^*]\mapsto \xi (x)$
has a minimum, so that $x \in \sigma (P)$ for some
face $P$ of $[ C^{\circ }\cap L^*]$. This proves 
property (i) of \ref{def:lrp}. Property (ii) is easy.
As for (iii), let $\Pi $ be a \rp \ cone in $C_+$.
Then $\Pi \cap [C^{\circ }\cap L^*]^\v$ is a rational 
polyhedron by  \ref{prop:cocore}. Since 
$\sigma (P)\cap \Pi$ is the cone over a face of $\Pi
\cap [C^0\cap L^*]^\v$ or reduced to the origin, the collection 
$\{ \sigma (P)\cap \Pi\}_P$ is finite and consists of \rp \ cones.
\end{proof}

Here is an interesting application.

\begin{theorem}[(Siegel property)]\label{prop:siegel}
Let $\Gamma $ be a subgroup of $\GL (V)$ which
leaves $C$ and a lattice in $V(\QQ)$ invariant. Then 
$\Gamma$ has the \emph{Siegel property} in $C_+$: if $\Pi
_1$ and $\Pi _2$ are polyhedral cones  in $C_+$, then the collection 
$\{ \gamma \Pi _1 \cap \Pi _2\}_{\gamma \in \Gamma}$ is finite.
Moreover, if $F_i$ denotes the face of $C_+$ whose relative interior contains 
the relative interior of $\Pi_i$,
then the set of $\gamma \in
\Gamma$ with $\gamma \mathring \Pi _1 \cap \mathring \Pi _2\not=
\emptyset $ is a finite union of right $Z_{\Gamma
}(F_1)$-cosets (hence also a finite union of left
$Z_{\Gamma }(F_2)$-cosets). In particular, $\Gamma $
acts properly discontinuously on $C$.
\end{theorem}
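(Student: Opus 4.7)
\emph{Plan.} First, any polyhedral cone $\Pi\subset C_+$ sits in a rational polyhedral subcone of $C_+$: write $\Pi$ as the cone on its finitely many generators, which lie in $C_+=[\bar C\cap V(\QQ)]$, express each as a positive rational combination of elements of $\bar C\cap V(\QQ)$, and take the cone on those rational vectors. So we may assume $\Pi_1,\Pi_2$ are rational polyhedral.

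Next, invoke the $\Gamma$-invariant rational polyhedral decomposition $\Sigma=\Sigma(C,L)$ of Proposition \ref{prop:rpdec} (its $\Gamma$-invariance follows from that of the kernel $[C^\circ\cap L^*]$). By clause (iii) of Definition \ref{def:lrp}, the sets $\Scal_i:=\{\sigma\in\Sigma:\sigma\cap\Pi_i\neq\emptyset\}$ are finite. For each pair $(\sigma,\sigma')\in\Scal_1\times\Scal_2$, any $\gamma\in\Gamma$ with $\gamma\sigma=\sigma'$ must bijectively match the unique finite Hilbert bases of $\sigma\cap L$ and $\sigma'\cap L$ (Gordan's lemma), so $\gamma|_\sigma$ takes only finitely many values. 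Since $\gamma\Pi_1\cap\Pi_2=\bigcup_{\sigma\in\Scal_1}\gamma(\sigma\cap\Pi_1)\cap\Pi_2$ is determined by the tuple $(\gamma|_\sigma)_{\sigma\in\Scal_1}$, only finitely many distinct intersections occur.

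For the coset refinement, suppose $\gamma\dot\Pi_1\cap\dot\Pi_2\neq\emptyset$. Then $\gamma\dot F_1\cap\dot F_2\neq\emptyset$ and, since distinct faces of $C_+$ have disjoint relative interiors, $\gamma F_1=F_2$. Let $\sigma_0\in\Sigma$ be the unique cone whose relative interior contains $\dot\Pi_1$; every cone in the star of $\sigma_0$ in the induced decomposition of $F_1$ contains $\Pi_1$ and so lies in $\Scal_1$, and together these cones span $F_1$ (they form a neighborhood of $\dot\sigma_0\subset\dot F_1$ in $F_1$). So $(\gamma|_\sigma)_{\sigma\in\Scal_1}$ determines $\gamma|_{F_1}$ and, by the previous step, takes only finitely many values, yielding finitely many right $Z_\Gamma(F_1)$-cosets. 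For proper discontinuity, any compact $K\subset C$ sits inside the interior of some rational polyhedral cone $\Pi\subset C\cup\{0\}$; applying the coset statement with $\Pi_1=\Pi_2=\Pi$, we have $F_1=C_+$ and $Z_\Gamma(C_+)=\{1\}$ (since $C_+$ spans $V$), so only finitely many $\gamma$ satisfy $\gamma K\cap K\neq\emptyset$.

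The main difficulty is the Hilbert-basis argument combined with the star observation: Gordan's lemma reduces the $\Gamma$-action on a single $\sigma$ to finitely many restrictions, and the star of $\sigma_0$ is what lets one parlay these cone-by-cone finitenesses into finiteness of $\gamma|_{F_1}$ modulo $Z_\Gamma(F_1)$.
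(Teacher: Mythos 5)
The central step of your argument is the claim that
\[
\Scal_i:=\{\sigma\in\Sigma:\sigma\cap\Pi_i\neq\emptyset\}
\]
is finite, justified by appealing to clause $(iii)$ of Definition \ref{def:lrp}. This is where the proof breaks down. As written the claim is vacuous: every $\sigma\in\Sigma$ and every polyhedral cone $\Pi_i$ contains the origin, so $\sigma\cap\Pi_i\neq\emptyset$ always and $\Scal_i=\Sigma$, which is infinite. Replacing ``$\neq\emptyset$'' by ``$\neq\{0\}$'' does not help, and this is the real point: clause $(iii)$ asserts only that the \emph{collection of intersections} $\Sigma|\Pi_i=\{\sigma\cap\Pi_i\}_{\sigma\in\Sigma}$ is finite, not that finitely many $\sigma$ produce a nontrivial intersection --- many distinct $\sigma$ can have the same trace on $\Pi_i$. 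Indeed $\Scal_i$ (in the ``$\neq\{0\}$'' sense) contains the full star $\Star_\Sigma(\Pi_i)=\{\tau\in\Sigma:\tau\supseteq\Pi_i\}$, which Proposition \ref{prop:inherit} tells you is in general only finite \emph{modulo} $Z_\Gamma(F_i)$, with $Z_\Gamma(F_i)$ typically infinite. So the finiteness you are invoking is essentially the conclusion of the Siegel property, not something that comes for free from local rational polyhedrality. This gap undermines both the finiteness of $\{\gamma\Pi_1\cap\Pi_2\}$ and the later coset-refinement step, which you reduce to the same finiteness by passing through the star of $\sigma_0$.

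The Hilbert-basis observation (that $\gamma|_\sigma$ takes finitely many values once $\gamma\sigma$ is pinned down among a finite set of candidates) is sound and attractive, but it cannot carry the argument without first securing a genuinely finite indexing set. The paper proceeds differently: it uses the $\Gamma$-action on $\Sigma(C,L)$ itself to replace the $\Pi_i$ by \emph{members} of that decomposition, observes that then $\gamma\dot\Pi_1\cap\dot\Pi_2\neq\emptyset$ forces $\gamma\Pi_1=\Pi_2$, and reduces the whole statement to the finiteness of $N_\Gamma(\Pi_i)/Z_\Gamma(F_i)$, which it obtains by exhibiting a faithful action of this quotient on a finite set (the members of $\Sigma(C,L)|F_i$ \emph{contained in} $F_i$ and containing $\Pi_i$ --- a set whose finiteness is asserted for the specific decomposition $\Sigma(C,L)$, not for an arbitrary \lrp\ one). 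If you want to push the Hilbert-basis idea through, you would need a substitute for the false finiteness of $\Scal_i$: for instance, work with the genuinely finite set $\{\sigma\in\Sigma:\dot\sigma\cap\Pi_i\neq\emptyset\}$, and a separate mechanism to control $\gamma$ modulo $Z_\Gamma(F_1)$ rather than on the nose.
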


\begin{proof} The first assertion follows from the second if
we apply it to the relative interiors of the (finitely
many) faces of $\Pi_1$ and $\Pi_2$. In order to
prove the second assertion, we first observe that the
set of $\gamma \in \Gamma $ with $\gamma \mathring \Pi
_1\cap \mathring \Pi _2 \not= \emptyset $ is indeed a
union of right $Z_{\Gamma }(F_1)$-cosets (and also of
left $Z_{\Gamma }(F_2)$-cosets).
Let $L\subset V(\QQ)$ be a $\G$-invariant lattice.
Since $\mathring \Pi _i$ is covered by finitely many
members of $\Sigma (C,L)$ whose relative interiors are
contained in $\mathring F_i$, we
may assume that $\Pi _i\in \Sigma (C,L)$, $i=1,2$. If
now $\gamma \mathring \Pi _1\cap \mathring \Pi _2\not=
\emptyset$, then we must have $\gamma \Pi _1=\Pi _2$.
So we only need to show that $G_i:=N_{\Gamma }(\Pi_i)/Z_{\Gamma }(F_i)$ 
is finite. Since $\Pi_i$ meets the relative 
interior of $F_i$, the collection $\Star_{\Sigma (C,L)}(\Pi)|F_i$ of members of  
$\Sigma(C,L)|F_i$ containing $\Pi_i$ must be finite. This is clearly acted on
by $G_i$. It then suffices to see that some member $\Pi'_i$ of  $\Star_{\Sigma (C,L)}(\Pi)|F_i$
has finite $G_i$-stabilizer. This is clear
if we take $\Pi'_i$  to be maximal, i.e.,  with  the property that  
its relative interior is open in $F_i$, for then this stabilizer will act faithfully on 
$\Pi'_i$ and hence (since $\Pi'_i$ is a rationally polyhedral cone)  must be finite. 
\end{proof}

\section{Pairs of Polyhedral Type}

In this section $V$ continues to denote a real finite 
dimensional vector space equipped with a rational
structure $V(\QQ)\subset V$ and $C$ is an open
nondegenerate convex cone in $V$.

\begin{propdef}\label{prop:poltype} 
Let $\Gamma $ be a subgroup of $\GL (V)$ which
stabilizes $C$ and some lattice in $V(\QQ)$. Then
the following conditions are equivalent:
\begin{enumerate}
\item[$(i)$] There exists a polyhedral cone $\Pi $ in 
$C_+$ with $\Gamma \cdot \Pi = C_+$.
\item[$(ii)$] There exists a polyhedral cone $\Pi $ in 
$C_+$ with $\Gamma \cdot \Pi \supset C$.
\item[$(iii)$] For \emph{every} $\Gamma $-invariant lattice
$L\subset V(\QQ)$, $\Gamma $ has finitely many
orbits in the set of extreme points of $[C\cap L]$.  
\item[$(iv)$] For \emph{some} $\Gamma $-invariant lattice
$L\subset V(\QQ)$, $\Gamma $ has finitely many
orbits in the set of extreme points of $[C\cap L]$.
\item[$(i)^*$-$(iv)^*$] The corresponding property
for the contragradient action of $\G$ on $C^{\circ }$.
\end{enumerate}
Moreover, in case $(ii)$ we necessarily have $\Gamma
\cdot \Pi =C_+$.
If one of these equivalent conditions is fulfilled, we say that $(V(\QQ), C,\Gamma)$ 
is a \emph{polyhedral triple} or simply, that $(C_+, \G)$ is  of \emph{polyhedral type}.
\end{propdef}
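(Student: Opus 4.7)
The plan is to close a cycle of implications using the rationally polyhedral decomposition $\Sigma(C,L)$ of $C_+$ from Proposition~\ref{prop:rpdec}: recall that its top-dimensional cones $\sigma(\{q\})$ are in bijection with the extreme points $q$ of $[C^\circ\cap L^*]$, have their relative interiors inside $C$, and together with their faces exhaust $C_+$. The trivial implications are $(i)\Rightarrow(ii)$ (since $C\subset C_+$), $(iii)\Rightarrow(iv)$, and their starred analogues.

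The main step will be $(ii)\Rightarrow(iv)^*$: given polyhedral $\Pi\subset C_+$ with $\Gamma\cdot \Pi\supset C$, Definition~\ref{def:lrp}(iii) forces $\Pi$ to meet only finitely many top-dimensional cones $\tau'_1,\dots,\tau'_m$ of $\Sigma(C,L)$. For any top-dimensional $\tau\in\Sigma(C,L)$ I would pick $x\in\dot\tau\subset C$, write $x=\gamma\pi$ with $\pi\in\Pi$, note that $\pi\in\dot{\tau'_j}$ for some $j$ (by the partition property of $\Sigma(C,L)$), and use $\Gamma$-equivariance to conclude $\gamma\tau'_j=\tau$. So there are at most $m$ orbits of top-dimensional cones, i.e.\ of extreme points of $[C^\circ\cap L^*]$. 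The converse $(iv)^*\Rightarrow(i)$ is then constructive: taking orbit representatives $q_1,\dots,q_k$, I would set $\Pi:=[\sigma(\{q_1\})\cup\cdots\cup\sigma(\{q_k\})]$, which is a rational polyhedral cone in $C_+$ (the convex hull of finitely many such cones through the origin). By $(iv)^*$, $\Gamma\cdot\Pi$ contains every top-dimensional cone of $\Sigma(C,L)$, and hence, via their faces, all of $C_+$.

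The \emph{moreover} clause comes from re-running this analysis on the original $\Pi$: the previous step already yields $C\subset\Gamma\cdot\Pi$, and for $x\in C_+\setminus C$ I would apply the Siegel property of Theorem~\ref{prop:siegel} with $\Pi_1=\Pi$ and $\Pi_2$ a small polyhedral cone in $C_+$ containing $x$ to deduce local finiteness of $\Gamma\cdot\Pi$ near $x$; this gives closedness of $\Gamma\cdot\Pi$ in $C_+$, and combined with density of $C$ forces $\Gamma\cdot\Pi=C_+$. The dual chain $(i)^*\Leftrightarrow(ii)^*\Leftrightarrow(iv)$ arises by the mirror argument with $(C,L)$ replaced by $(C^\circ,L^*)$ (using $(C^\circ)^\circ=C$) and the dual decomposition $\Sigma(C^\circ,L^*)$, whose top-dimensional cones correspond to the extreme points of $[C\cap L]$. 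The eight-fold equivalence then closes via lattice-independence of $(i)$ and $(i)^*$: once $(i)$ holds, $(ii)\Rightarrow(iv)^*$ applies for every $\Gamma$-invariant lattice, giving $(iii)^*$; symmetrically $(i)^*$ gives $(iii)$, and the cross-link between primal and dual cycles is provided by the constructive implications $(iv)\Rightarrow(i)^*$ and $(iv)^*\Rightarrow(i)$ already established.

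The hardest part, I expect, will be the \emph{moreover} clause, where one needs $\Gamma\cdot\Pi=C_+$ for the specific $\Pi$ of $(ii)$ rather than some replacement cone. This hinges on closedness of $\Gamma\cdot\Pi$ in $C_+$ at boundary points $x\in C_+\setminus C$ where $\Gamma$ need not act properly discontinuously in the usual sense; the Siegel property must therefore be applied with some care to auxiliary polyhedral cones reaching into $\partial C\cap C_+$, which is less automatic than the open-cone case handled by Theorem~\ref{prop:siegel}.
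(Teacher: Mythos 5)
Your route through $\Sigma(C,L)$ is an elegant alternative to the paper's, but as laid out it does not actually close the eight-fold equivalence: the implications you establish split the eight conditions into two \emph{disjoint} mutually-equivalent classes with no bridge between them.

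Concretely, list the implications you prove: the trivial ones $(i)\Rightarrow(ii)$, $(iii)\Rightarrow(iv)$ and their duals; the main step $(ii)\Rightarrow(iv)^*$ and its dual $(ii)^*\Rightarrow(iv)$; the constructive step $(iv)^*\Rightarrow(i)$ and its dual $(iv)\Rightarrow(i)^*$; and the lattice-independence strengthening $(ii)\Rightarrow(iii)^*$, $(ii)^*\Rightarrow(iii)$. Tracing strongly connected components, you obtain exactly two: $\{(i),(ii),(iii)^*,(iv)^*\}$ and $\{(i)^*,(ii)^*,(iii),(iv)\}$. Every implication you cite, including the two you present as \emph{cross-links}, namely $(iv)\Rightarrow(i)^*$ and $(iv)^*\Rightarrow(i)$, stays \emph{inside} one of these two classes. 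The reason is that both of your nontrivial implications \emph{already} cross between primal and dual: $(ii)\Rightarrow(iv)^*$ goes primal-to-dual and $(iv)^*\Rightarrow(i)$ goes dual-to-primal, so the loop $(i)\to(ii)\to(iv)^*\to(i)$ crosses twice and comes home without visiting $(iii)$, $(iv)$, or any of $(i)^*,(ii)^*,(iii)^*$. You need at least one implication that proceeds within the primal side (or within the dual side) in order to merge the two classes. The paper supplies exactly this with a Gordan's-lemma argument for $(ii)\Rightarrow(iii)$ (all extreme points of $[C\cap L]$ lying in $\Pi$ are among the finite generating set of the semigroup $C\cap L\cap\Pi$); this never invokes the dual, and lets the single chain $(i)\Rightarrow(ii)\Rightarrow(iii)\Rightarrow(iv)\Rightarrow(i)^*\Rightarrow\cdots\Rightarrow(iv)^*\Rightarrow(i)$ go all the way around. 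Your argument can be repaired by inserting that one step (or its dual $(ii)^*\Rightarrow(iii)^*$); nothing else in your outline needs to change.

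Two smaller points. In $(ii)\Rightarrow(iv)^*$ you need $\Pi$ to be a \emph{rational} polyhedral cone before invoking Definition~\ref{def:lrp}(iii); as the paper notes, one may enlarge $\Pi$ to such a cone without loss of generality. Also, when you write ``$\pi\in\dot{\tau'_j}$ for some $j$,'' what is immediate from the partition property is only that $\pi\in\dot\sigma$ for a unique $\sigma\in\Sigma(C,L)$; you then deduce $\gamma\sigma=\tau$ from $\Gamma$-invariance of the decomposition, which forces $\sigma$ to be top-dimensional, and hence $\sigma$ is one of the finitely many $\tau'_j$. That is fine, but spell it out. Finally, for the \emph{moreover} clause the paper's argument is cleaner than your local one: choose once and for all a rational polyhedral $\Pi_1$ with $\Gamma\cdot\Pi_1=C_+$, use the Siegel property (which is stated for polyhedral cones in $C_+$, so boundary faces are covered) to show $(\Gamma\cdot\Pi)\cap\Pi_1$ is closed, and conclude from $\Gamma\cdot\Pi\supset C\supset\inter(\Pi_1)$ that $\Gamma\cdot\Pi\supset\Pi_1$; this avoids the issue you correctly flag about manufacturing suitable local cones $\Pi_2$ near $\partial C\cap C_+$.
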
 

\begin{proof}
The implications $(i)\Rightarrow (ii)$ and $(iii)\Rightarrow (iv)$ 
are obvious.

We prove $(ii)\Rightarrow (iii)$. Let $\Pi $ be as in $(ii)$.
Without loss of generality we may assume that $\Pi $
is \rp  .  Let $S$ denote the set of extreme points of
$[C\cap L]$. Then we must show that $S \cap
\Pi$ is finite. Every extreme point of $[C\cap L]$ is
in $C\cap L$ and hence $S\cap \Pi \subset C\cap L\cap
\Pi $. 
Let $v_1,\dots , v_r$ denote the set of primitive integral 
generators of the extremal rays of $\Pi$. Any $e\in S\cap \Pi $ 
has the property that $e-v_i\notin C\cap \Pi$ for all $i$. This
implies that if we write $e=\sum_{i=1}^r \lambda_i v_i$ with
$\lambda_i\ge 0$, then $\lambda_i\le 1$ for all $i$. So $S\cap \Pi $ 
is contained in a compact set (a continuous image of $[0,1]^r$) and hence finite.

Proof of $(iv)\Rightarrow (i)^*$.
In \ref{prop:rpdec} we have defined  a
\rp \ decomposition $\Sigma :=\Sigma (C^\circ ,L^*)$ of
$C^{\circ }_+$. This decomposition is $\Gamma$-invariant, 
and the correspondence $P\mapsto \sigma
(P)$ between faces of $[C\cap L]$ and members of
$\Sigma $ is equivariant. Now extreme points of
$[C\cap L]$ correspond to maximal members of $\Sigma$. 
So if $S$ is a system of $\Gamma
$-representatives in the collection of extreme
points of $[C\cap L]$, then $\sum_{e\in S}\sigma (\{ e\})$ is a \rp \ cone in
$C^{\circ }_+$ whose $\Gamma$-orbit equals $C^{\circ }_+$.

These implications, together with their dual forms,
prove the equivalence of $(i)$ through $(iv)^*$. As
for the last assertion, we choose a \rp \ cone 
$\Pi _1\subset C_+$ such that $\Gamma \cdot \Pi
_1=C_+$ (which exists in view of $(ii)\Rightarrow
(i)$) and prove that $\Gamma \cdot \Pi \supset \Pi
_1$. By the Siegel property \ref{prop:siegel}, the collection $\{
\gamma (\Pi )\cap \Pi _1|\gamma \in \Gamma \} $ has
only finitely many distinct members, so $(\Gamma \cdot
\Pi )\cap \Pi _1$ is closed. Since $\Gamma \cdot \Pi
\supset C$ and the latter contains the interior of $\Pi _1$, it follows that $\Gamma
\cdot \Pi \supset \Pi_1$.
\end{proof}

An important class of examples is singled out by the
proposition below, which is essentially due to A. Ash \cite{amrt}.

\begin{proposition}\label{prop:groupexample} 
Let $\Gcal$ be a reductive $\QQ$-algebraic subgroup 
of the general linear group of $V$.
Assume that $C$ is an orbit of the identity component $G$ of  $\Gcal(\RR)$. 
Then $\G:=G\cap \GL (L)$ is an arithmetic group
in $\Gcal$ and  $(C_+, \G)$ is of polyhedral type.
\end{proposition}

\begin{proof}[Sketch of proof]
The $G$-stabilizer of a point of $C$ is maximal compact subgroup of $G$ so that
$C$ is in fact the symmetric space of $G$.
The reduction theory for arithmetic groups shows that a fundamental
domain for the arithmetic  group $\G$ acting in $C$ is contained in a finite union of 
so-called \emph{Siegel sets} in $C$ (see for instance \cite{amrt} for the definition).
Hence it suffices to show that a Siegel set is contained
in a rational polyhedral cone in $C$. This is however the easy part of the proof of  
II-Theorem 4.1 of \cite{amrt}.
\end{proof}

\begin{example}
Another interesting class of examples not 
contained in the one above arises in the theory of Coxeter groups. 
Let $(n_{ij})$ be a \emph{nonsingular}, integral
$l\times l$ generalized Cartan matrix \cite{looij:genroot} without
components of finite type. Let $W\subset \GL_l(\ZZ)$
be the (Weyl) group generated by the reflections
$s_i :x\mapsto x-\sum_j n_{ij}x_j$ and let $I$
denote the $W$-orbit of the \emph{fundamental chamber} 
$\Pi\subset \RR^l$ defined by $x_i\geq 0, i=1,\dots ,l\}$.
It is known that $I$ is nondegenerate convex
\cite{looij:genroot}, and so $(I,W)$ is of polyhedral type. 
The dual construction (for the contragradient action of $W$ on
$(\RR^n)^*$) yields a nondegenerate convex cone
$\check I\subset (\RR^n)^*$ which with $W$ also
forms a pair of polyhedral
type. But its closure is  in general \emph{not} the dual of $I$. 

Somewhat more general situations, which have been
investigated by Vinberg \cite{vin}, also give examples of
polyhedral triples.
\end{example}

\begin{example}
Algebraic geometry can provide interesting  and highly nontrivial examples of  polyhedral triples. 
If $X$ is a complex compact manifold, then 
take for $V$ the N\'eron-Severi group of $X$ tensored with $\RR$, for 
$C$ the cone in $V$  spanned by the ample classes (we assume this set to be nonempty) 
and for $\G$ the image $\aut (X)$ in $\GL(V)$. It is known that  $(C,V,\G)$ is a polyhedral triple 
for many  surfaces, among them K3 surfaces (Sterk \cite{sterk}) and Enriques surfaces
(Namikawa \cite{namikawa}). David Morrison's  cone conjecture  \cite{morrison:mirror} asserts that this should also hold for the K\"ahler cone  of a  Calabi-Yau manifold $X$ with $h^{2,0}(X)=0$. This too, has been verified in some cases. See also \cite{totaro}
\end{example}

\begin{question}
Given a pair of polyhedral type 
$(C_+,\Gamma )$, do $\Gamma$
and the cone generated by the
$\Gamma $-orbit of a rational point of
$C_+-\{ 0\}$ form a pair of polyhedral type?
\end{question}

There is in general  no subgroup $\Gamma$
of $\GL(V(\QQ))$ which forms with $C_+$ a pair
of polyhedral type. But if there is one, then the next result says that
all such subgroups belong to a single
commensurability class. (Recall that two subgroups of
some group are said to be \emph{commensurable} if their
intersection is of finite index in each of them,
and that this is an equivalence relation.) 

\begin{proposition}\label{prop:commens} 
Let $(C_+,\Gamma)$ be a pair of polyhedral type, and let $\Gamma '$ be a subgroup of
$\GL(V(\QQ))$ which stabilizes $C$. Then $(C_+,\Gamma ')$
is of polyhedral type if and only if $\Gamma '$ is
commensurable with $\Gamma $.
\end{proposition}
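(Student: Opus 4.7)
The proof splits into the two implications of the biconditional.

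\textbf{Commensurable implies polyhedral.} Let $\Gamma_0:=\Gamma\cap\Gamma'$, which by hypothesis has finite index in both $\Gamma$ and $\Gamma'$. The $\Gamma$-invariant lattice $L$ is a fortiori $\Gamma_0$-invariant; summing its translates $\gamma'_i L$ over a finite set of coset representatives of $\Gamma'/\Gamma_0$ yields a $\Gamma'$-invariant lattice (a finitely generated $\ZZ$-submodule of $V(\QQ)$ containing $L$). For a fundamental cone, pick coset representatives $g_1,\ldots,g_n$ of $\Gamma/\Gamma_0$, and let $\Pi_0:=[g_1\Pi\cup\cdots\cup g_n\Pi]$ be the convex hull, a rational polyhedral cone in $C_+$. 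Since $\Gamma_0\cdot\bigcup_i g_i\Pi=\Gamma\cdot\Pi=C_+$, we have $\Gamma'\cdot\Pi_0\supseteq\Gamma_0\cdot\Pi_0=C_+$, so condition (ii) of \ref{prop:poltype} holds for $\Gamma'$.

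\textbf{Polyhedral implies commensurable.} The strategy rests on the following auxiliary lemma: \emph{if $H\leq G$ are subgroups of $\GL(V(\QQ))$ such that both $(C_+,H)$ and $(C_+,G)$ are pairs of polyhedral type, then $[G:H]<\infty$}. To prove it, fix rational polyhedral fundamental cones $\Pi_G$ for $G$ and $\Pi_H$ for $H$, and a rational interior point $y_0\in\dot\Pi_H$ with trivial $G$-stabilizer (generic: the fixed loci of nonidentity elements of the countable group $G$ have positive codimension). By the Siegel property \ref{prop:siegel} applied to $G$ with $\Pi_G,\Pi_H$, the cone $\Pi_H$ is covered by finitely many $G$-translates of $\Pi_G$, and hence $Gy_0\cap\Pi_H$ is finite. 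Conversely, each right $H$-coset $Hg$ in $G$ contributes a distinct point to that intersection: $gy_0\in C_+=H\cdot\Pi_H$ gives an $h\in H$ with $hgy_0\in\Pi_H$, and distinct cosets yield distinct points via the trivial-stabilizer hypothesis. Therefore $[G:H]\leq|Gy_0\cap\Pi_H|<\infty$.

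Applied to the inclusions $\Gamma_0\leq\Gamma$ and $\Gamma_0\leq\Gamma'$, this lemma reduces the whole problem to showing that $\Gamma_0:=\Gamma\cap\Gamma'$ is itself of polyhedral type. The lattice $L\cap L'$ is $\Gamma_0$-invariant, so the remaining task is to produce a rational polyhedral cone $\Pi_0\subseteq C_+$ with $\Gamma_0\cdot\Pi_0=C_+$. My plan is to exploit both Siegel properties: Siegel for $\Gamma$ applied to the pair $\Pi,\Pi'$ yields $\gamma_1,\ldots,\gamma_N\in\Gamma$ with $\Pi'\subseteq\bigcup_i\gamma_i\Pi$, and symmetrically Siegel for $\Gamma'$ gives $\gamma'_1,\ldots,\gamma'_M\in\Gamma'$ with $\Pi\subseteq\bigcup_j\gamma'_j\Pi'$; one then assembles a candidate $\Pi_0$, say the convex hull of $\bigcup_i\gamma_i\Pi\cup\bigcup_j\gamma'_j\Pi'$. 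The main obstacle is the verification that $\Gamma_0\cdot\Pi_0=C_+$: writing $x\in C_+$ as $x=\gamma'\gamma_i p$ with $\gamma'\in\Gamma'$, $\gamma_i\in\Gamma$, $p\in\Pi$, one must decompose the ``mixed'' element $\gamma'\gamma_i\in\Gamma'\cdot\Gamma$ as $h\cdot s$ with $h\in\Gamma_0$ and $sp\in\Pi_0$, using the finiteness from both Siegel sets to absorb the correction $s$ into the translates already incorporated in $\Pi_0$. This combinatorial step is where the interaction between the two polyhedral structures must be harnessed, and I expect it to be the heart of the argument.
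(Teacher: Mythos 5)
Your ``if'' direction is essentially correct and follows the same route as the paper, modulo a small left/right coset ambiguity that is easily repaired.

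The ``only if'' direction, however, has a genuine gap, and you correctly sense it yourself in your final paragraph. Your auxiliary lemma --- \emph{if $H\leq G$ are both of polyhedral type for $C_+$, then $[G:H]<\infty$} --- is true and is indeed the key to the argument, but you apply it in the wrong direction. You try to apply it to the inclusions $\Gamma_0\leq\Gamma$ and $\Gamma_0\leq\Gamma'$ with $\Gamma_0=\Gamma\cap\Gamma'$, which forces you to \emph{prove} that $\Gamma_0$ is of polyhedral type. That is the hard part, and your ``plan'' for producing $\Pi_0$ with $\Gamma_0\cdot\Pi_0=C_+$ is only a sketch with an unresolved combinatorial step. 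The paper avoids this entirely by going \emph{up} rather than \emph{down}: after arranging (as you would) that $\Gamma$ and $\Gamma'$ stabilize a common lattice $L$, one considers $\Gamma'':=\{g\in\GL(V):gC=C,\ gL=L\}$. Then $\Gamma''\supset\Gamma$ automatically, so $\Gamma''\cdot\Pi\supset\Gamma\cdot\Pi=C_+$ and $(C_+,\Gamma'')$ is of polyhedral type for free; your lemma (or the paper's direct Siegel argument, which is the same idea) then gives $[\Gamma'':\Gamma]<\infty$ and $[\Gamma'':\Gamma']<\infty$, hence commensurability. Going up costs nothing; going down costs you exactly the statement you are trying to prove.

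There is also a smaller technical gap in your proof of the auxiliary lemma: the ``hence'' in ``$\Pi_H$ is covered by finitely many $G$-translates of $\Pi_G$, and hence $Gy_0\cap\Pi_H$ is finite'' does not follow --- a single $\gamma_i\Pi_G$ is an unbounded cone and could a priori contain infinitely many orbit points. What Siegel actually gives you, and what you should invoke, is the finiteness of the \emph{group-element} set $\{g\in G: g\dot\Pi_H\cap\dot\tau\neq\emptyset\}$ for each face $\tau$ of $\Pi_G$ meeting $C$ (here using that $Z_G(C_+)$ is trivial since $G$ is discrete); since $Gy_0\subset C$ and $y_0$ has trivial stabilizer, that finiteness transfers to $Gy_0\cap\Pi_G$ and then, summing over finitely many translates, to $Gy_0\cap\Pi_H$. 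The injectivity of the coset-to-orbit-point map is fine.
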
 

\begin{proof} \emph{`If':}  
Let $L\subset V(\QQ)$ be a lattice 
stabilized by $\G $. If $\G '$ contains
$\G $ as a subgroup of finite index, then
$\G '\cdot L$ is contained in a finite union of
lattices, and hence generates a lattice $L'\subset
V(\QQ)$. Clearly, $\G '$ stabilizes $L'$.
It then follows from the definition \ref{prop:poltype}-$i$, that
$(C_+, \G ')$ is of polyhedral type. If on the
other hand $\Gamma '$ is a subgroup of finite index
of $\Gamma $, then choose a finite system $S\subset
\Gamma$ of representatives of left cosets of
$\Gamma '$ in $\Gamma $. If $\Pi$ is a \rp \
cone in $C_+$ such that $\Gamma \cdot \Pi =
C_+$, then $\Pi '=\sum_{s\in S} s(\Pi )$ is a \rp
\ cone satisfying $\Gamma '\cdot \Pi '=C_+$, and
so $(C_+,\Gamma )$ is in this case of polyhedral type,
too.

\emph{`Only if':}  
Assume that $(C_+,\Gamma )$ is of polyhedral
type. If $L'\subset V(\QQ)$ is a lattice
stabilized by $\Gamma '$, then $L'\supset kL$,
for some $k\in \NN$, and so $L\supset L'\cap
L\supset kL$.
Since $L/kL$ is finite, the group of $\gamma \in
\Gamma$ stabilizing $L'\cap L$ is of finite index in
$\Gamma$, and hence its action on $C_+$ is of 
polyhedral type. A similar assertion holds for the group of
$\gamma '\in \Gamma '$ which stabilize $L\cap L'$.
So without loss of generality we can assume that
$\Gamma $ and $\Gamma '$ both stabilize a lattice
$L\subset V(\QQ)$. It is enough to prove that 
the group $\Gamma ''$ of $\gamma \in \GL(V)$ which 
leave both $L$ and $C$ invariant, contains $\Gamma
$ and $\Gamma '$ as subgroups of finite index. Let
$\Pi$ be a \rp \ cone in $C_+$ such that $\Gamma
\cdot \Pi\supset C$, and let $S$ denote the set of
$\gamma ''\in \Gamma ''$ with $\gamma ''(\Pi )\cap
\Pi \cap C\not= \emptyset$. By the Siegel property
\ref{prop:siegel}, $S$ is finite. For every $\gamma ''\in \Gamma
''$ there exists a $\gamma \in \Gamma $ such that
$\gamma (\Pi )$ meets $\gamma ''(\Pi )\cap C$, so that
$\gamma ^{-1}\gamma ''\in S$. This proves that
$\Gamma ''=S\cdot \Gamma$ and hence that $\Gamma $
is of finite index in $\Gamma ''$. For the same
reason, $\Gamma '$ is of finite index in $\Gamma ''$.
\end{proof}

In the remainder of this section, 
$(C_+,\Gamma)$ is of 
polyhedral type and $L\subset V(\QQ)$
is a $\Gamma $-invariant lattice.

\begin{proposition}\label{prop:inherit}
Let $\Sigma $ be a $\Gamma $-invariant locally
\rp \ decomposition of $C_+$, and let $\sigma
\in \Sigma$. Then $(\sigma ,\Gamma (\sigma ))$ is of
polyhedral type, and if $F$ denotes the
smallest face of $C_+$ which contains $\sigma $,
then $\Star_{\Sigma}(\sigma )$ decomposes into a
finite number of $Z_{\Gamma }(F)$-equivalence
classes.
\end{proposition}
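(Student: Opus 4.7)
My plan is to derive both assertions from the polyhedral type of $(C_+,\Gamma)$, the local finiteness property (iii) of $\Sigma$, and the Siegel property (Theorem~\ref{prop:siegel}). Fix a rationally polyhedral cone $\Pi_0\subset C_+$ with $\Gamma\Pi_0=C_+$, which exists by Proposition~\ref{prop:poltype}. After $\Gamma$-translating $\sigma$ (which does not affect either conclusion), I may assume $\dot\sigma\cap\dot\Pi_0\neq\emptyset$; by property (iii) this makes $\sigma\cap\Pi_0$ rationally polyhedral of the same dimension as $\sigma$, so $V_\sigma:=\mathrm{span}(\sigma)$ is defined over $\QQ$ and $L_\sigma:=L\cap V_\sigma$ is a $\Gamma(\sigma)$-invariant lattice.

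For part~(a), I verify condition~(ii) of Proposition~\ref{prop:poltype} by exhibiting an explicit rationally polyhedral cone $\Pi\subset\sigma$ with $\Gamma(\sigma)\Pi\supset\dot\sigma$. The members $\sigma'\in\Gamma\cdot\sigma$ with $\dot\sigma'\cap\Pi_0\neq\emptyset$ inject via $\sigma'\mapsto\sigma'\cap\Pi_0$ into the finite collection $\Sigma|\Pi_0$ (their relative interiors being pairwise disjoint by property~(i)), so there are finitely many. Let $\gamma_1,\dots,\gamma_k\in\Gamma$ enumerate these as $\{\gamma_i^{-1}\sigma\}$, and set $\Pi:=\sum_{i=1}^{k}(\sigma\cap\gamma_i\Pi_0)$, a Minkowski sum of rationally polyhedral cones inside $\sigma$. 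Given $p\in\dot\sigma$, pick $\gamma\in\Gamma$ with $\gamma^{-1}p\in\Pi_0$; then $\gamma^{-1}\sigma=\gamma_i^{-1}\sigma$ for some $i$, forcing $\gamma=h\gamma_i$ with $h\in\Gamma(\sigma)$, and $h^{-1}p=\gamma_i\gamma^{-1}p$ lies in $\sigma\cap\gamma_i\Pi_0\subset\Pi$, so $p\in h\Pi\subset\Gamma(\sigma)\Pi$.

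For part~(b), the plan is to pass from $\Gamma$-orbits on $\Sigma$ (finite, by finiteness of $\Sigma|\Pi_0$) to $\Gamma(\sigma)$-orbits on $\Star(\sigma)$, and then refine to $Z_\Gamma(F)$-orbits. Fixing a $\Gamma$-orbit meeting $\Star(\sigma)$ with representative $\tau_0\supset\sigma$, the partition property~(i) forces any $\sigma'\in\Sigma$ contained in $\tau_0$ to be a face of $\tau_0$, so the orbit elements in $\Star(\sigma)$ correspond via $\gamma\tau_0\mapsto\gamma^{-1}\sigma$ to faces of $\tau_0$ in the $\Gamma$-orbit of $\sigma$. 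Applying part~(a) to $\tau_0$ shows that $(\tau_0,\Gamma(\tau_0))$ is itself of polyhedral type, with a rationally polyhedral fundamental domain $\Pi_{\tau_0}\subset\tau_0$; by (iii) only finitely many faces of $\tau_0$ meet $\Pi_{\tau_0}$ in full dimension, and every face of $\tau_0$ has a $\Gamma(\tau_0)$-translate with that property, so the faces of $\tau_0$ in $\Gamma\cdot\sigma$ form finitely many $\Gamma(\tau_0)$-orbits, and hence $\Gamma\tau_0\cap\Star(\sigma)$ breaks into finitely many $\Gamma(\sigma)$-orbits. The remaining, and hardest, step is to show that each $\Gamma(\sigma)$-orbit $\Gamma(\sigma)\tau$ in $\Star(\sigma)$ consists of finitely many $Z_\Gamma(F)$-orbits. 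Here the orbit count equals $|Z_\Gamma(F)\backslash\Gamma(\sigma)/(\Gamma(\sigma)\cap\Gamma(\tau))|$; the stabilizer $\Gamma(\sigma)\cap\Gamma(\tau)$ contains the pointwise stabilizer $Z_\Gamma(F_\tau)\subset Z_\Gamma(F)$ of the smallest face $F_\tau\supset F$ of $C_+$ containing $\tau$, and I would extract the finiteness by applying the Siegel property inside the polyhedral triple $(V_\sigma,\dot\sigma,\Gamma(\sigma))$ provided by part~(a) to $\Pi$ and to an analogous rationally polyhedral cone playing the role of $\Pi$ for $\tau$. Cleanly executing this Siegel argument---especially when $F_\tau$ strictly contains $F$, so that the two cones sit in different faces of $C_+$---is the step I expect to require the most care.
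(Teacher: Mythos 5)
Your proof of the first assertion follows the paper's route closely: the paper sets $\Pi_1 = (\gamma_1^{-1}\Pi_0 + \cdots + \gamma_N^{-1}\Pi_0)\cap\sigma$ where you take $\Pi = \sum_i(\sigma\cap\gamma_i\Pi_0)$, but the coset bookkeeping is the same, and the conclusion is correct. (One small imprecision: asserting $\dot\sigma\cap\dot\Pi_0\neq\emptyset$ after translation is not needed and not quite automatic; you only need $\dot\sigma\cap\Pi_0\neq\emptyset$, and the rationality of $V_\sigma$ is better obtained from the identity $(\dot\sigma)_+=\sigma$, which the paper derives at the end of this part of the argument.)

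For the second assertion you have identified the right obstacle but not resolved it, and this is a genuine gap. Your plan is to pass first to $\Gamma(\sigma)$-orbits in $\Star(\sigma)$ (your step works, modulo details) and then refine each $\Gamma(\sigma)$-orbit into finitely many $Z_\Gamma(F)$-orbits, which you acknowledge you cannot cleanly execute. The difficulty is real: the Siegel property of the triple $(V_\sigma,\dot\sigma,\Gamma(\sigma))$ you obtained in part (a) lives inside $\sigma$, whereas a member $\tau\in\Star(\sigma)$ properly contains $\sigma$, so this Siegel property is not directly applicable to $\tau$. You would in fact need to compare the $\Gamma(\sigma)$-stabilizer of $\tau$ with $Z_\Gamma(F)$, which requires a separate argument (roughly, relating $Z_\Gamma(F)\cdot\Gamma(\sigma)_\tau$ to a finite-index subgroup of $\Gamma(\sigma)$), and this is not supplied.

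The paper avoids the detour through $\Gamma(\sigma)$-orbits entirely. It picks a rational point $x_0\in\dot\sigma$ whose $\Gamma(\sigma)$-stabilizer is as small as possible (the paper says ``not a fixed point of a nonidentity element of $\Gamma(\sigma)$'', which one should read as: the stabilizer of $x_0$ is exactly the kernel of the $\Gamma(\sigma)$-action on $V_\sigma$, hence lies in $Z_\Gamma(F)$). For each $\tau\supset\sigma$ it produces a single $\gamma_\tau\in\Gamma$ with $\gamma_\tau(x_0)\in\Pi_0$ and $\gamma_\tau(\dot\tau)\cap\Pi_0\neq\emptyset$, by choosing a small rationally polyhedral cone $\Pi_\tau\subset\tau$ through $x_0$ meeting $\dot\tau$ and applying the Siegel property to cover $\Pi_\tau$ by finitely many $\gamma^{-1}\Pi_0$. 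The resulting map $\tau\mapsto(\gamma_\tau(x_0),\gamma_\tau(\tau)\cap\Pi_0)$ lands in a finite set (by the Siegel property applied to the ray through $x_0$ and by the locally rationally polyhedral property of $\Sigma$), and two $\tau$'s with the same image differ by an element of $\Gamma(\sigma)_{x_0}\subset Z_\Gamma(F)$. This direct two-invariants argument is both shorter and, crucially, completes the step your approach leaves open; to salvage your version you would essentially need to reproduce this mechanism inside the refinement step.
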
  

\begin{proof}
Let $\Pi$ be a \rp \ cone in $C_+$ with $\G\cdot\Pi=C_+$. Then
$\Pi$ meets the relative interiors of only
finitely many members of $\Sigma $. Let $\gamma
_1,\dots ,\gamma _N \in \Gamma$ be such that
$\gamma _1(\mathring \sigma ),\dots ,\gamma
_N(\mathring \sigma )$ are the $\Gamma $-translates of
$\mathring \sigma $ which meet $\Pi $. Then $\Pi_1:=
(\gamma _1^{-1}(\Pi )+\cdots +\gamma _N^{-1}(\Pi ))\cap
\sigma $ is a \rp\ cone. For every $x\in \mathring \sigma$, 
there exists a $\gamma \in \Gamma $ such that
$\gamma (x) \in \Pi $. Then $\gamma (\mathring \sigma)\cap
\Pi \not= \emptyset$ and so $\gamma (\mathring \sigma )=
\gamma _{\nu }(\mathring \sigma )$ for some $\nu\in
\{1,\dots ,N\}$. This implies that $\gamma _{\nu
}^{-1}\gamma $ leaves $\sigma $ invariant and maps $x$
into $\Pi _1$. So $\Gamma (\sigma )\cdot \Pi_1
\supset \mathring \sigma $. As every \rp\ cone in $C_+$
intersects $\sigma $ in a \rp \ cone, we have $(\mathring
\sigma )_+=\sigma $. This proves the first assertion.

Next we fix a $x_0 \in\mathring\sigma\cap V(\QQ)$
which is not a fixed point of
a nonidentity element of $\Gamma (\sigma )$. We prove
that for every $\tau \in \Sigma $ with $\tau \supset
\sigma $, there exists a $\gamma _{\tau }\in \Gamma $ 
such that $\gamma _{\tau }(x_0) \in \Pi $ and 
$\gamma _{\tau } (\mathring \tau)\cap \Pi\not=\emptyset$. 
This will imply the last
assertion, for  $\Gamma x_0\cap \Pi $ and $\Sigma |\Pi$ 
are finite. To see that such
a $\gamma_{\tau }$
exists, choose a \rp \ cone $\Pi _{\tau
}\subset \tau $ with $\Pi _{\tau }\cap \mathring \tau
\not= \emptyset $ and $\Pi _{\tau }\cap \sigma =
\RR_{\geq 0}x_0$. Since $\{\Pi _{\tau }\cap
\gamma ^{-1}(\Pi )|\gamma \in \Gamma \}$ is a finite
collection of \rp \ cones which covers $\Pi _{\tau
}$, there exists a $\gamma _{\tau }\in \Gamma $ with
$x_0 \in \gamma _{\tau}^{-1}(\Pi )$ and $\mathring \Pi
_{\tau }\cap \gamma _{\tau }^{-1}(\Pi )\not=
\emptyset $. So $\gamma _{\tau }(x_0)\in \Pi $ and
$\gamma _{\tau }(\mathring \tau )\cap \Pi \not=
\emptyset $, as required.
\end{proof}

\begin{example}\label{example:arrangement}
 Here is an example of a nontrivial situation to which  
the previous proposition applies.
Let $\langle \, ,\, \rangle $ be a symmetric bilinear
form on $V$ of signature $(1,\dim V-1)$ defined over
$\QQ$, and let $C$ be a connected
component of the set of $x\in V$ with $\langle
x,x\rangle >0$. We choose a lattice $L$ in $V(\QQ)$,
and let $\Gamma :=\Orth (L)\cap \aut (C)$.
It follows from Proposition \ref{prop:groupexample} that 
$(V(\QQ),C,\Gamma )$ is of
polyhedral type. Suppose now further be
given a collection $\Hcal $ of hyperplanes of $V$
defined over $\QQ$ meeting $C$, which is a finite
union of $\Gamma $-orbits.

\smallskip 
\emph{Claim. } The collection of hyperplanes $\Hcal$ induces 
a $\Gamma $-invariant \lrp\ decomposition $\Sigma $ of $C_+$.

\begin{proof}
We must show that for every \rp\ cone $\Pi$ in
$C_+$, the collection $\{ H\cap \Pi | H\in \Hcal\}$
has only finitely distinct members. Given $H\in \Hcal$, 
then $C_+\cap H$ and the
group of $\gamma \in \Orth (L\cap H)$ which
preserve $C\cap H$ make up a pair of polyhedral type
(of one dimension lower, but otherwise of the
same type as $(C_+,\Gamma )$). It is
not hard to show that $\Gamma (H)$ is of finite
index in the latter group, and so by \ref{prop:commens},
$(C_+\cap H, \Gamma (H))$ is also of polyhedral
type. Hence there exists a \rp \ cone $\Pi _H\subset
C_+\cap H$ such that $\Gamma (H)\cdot \Pi _H=C_+\cap
H$. By the Siegel property \ref{prop:siegel}, the collection
$\{\gamma (\Pi _H)\cap \Pi |\gamma \in \Gamma \}$ has
finitely many distinct members. If we let $H$ run
over a representative system of $\Gamma
$-equivalence classes in $\Hcal$, we find that the
same is true for the collection $\{H\cap \Pi |H\in
\Hcal\}$.
\end{proof}
\end{example}

This construction often yields \lrp\ decompositions
of $C_+$ for which the adverb ``locally'' can not be
dropped, and thus produces in view of \ref{prop:inherit} 
also interesting new
examples of pairs of polyhedral type. For instance,
given a union $T$ of conjugacy
classes of reflections in $\Gamma $, then because  
such conjugacy classes are finite in number,
the collection $\Hcal$ of fixed point
hyperplanes of the members of $T$ breaks up in a
finite number of $\Gamma $-equivalence classes.
The resulting \lrp\ decomposition of $C_+$ is
\rp\ if and only if the subgroup of $\Gamma $
generated by $T$ is of finite index in $\Gamma $.
This follows from work of Vinberg \cite{vin}. But
according to this very author \cite{vin2}, in only
a few cases the subgroup of $\Gamma$ generated by
its reflections is of finite index in $\Gamma$. 

\begin{proposition}\label{prop:cores} 
Every $\G$-invariant kernel for $C$ contains
a core and is contained in a cocore for $C$. Moreover
$[(\bar C-\{ 0\} )\cap L]+\bar C$ is a cocore for $C$,
and dually, $((C^*-\{ 0\})\cap L^*)^\v$ is a core for $C$.
\end{proposition}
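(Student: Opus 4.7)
The plan is to prove the four assertions in sequence, using duality to pass between each "core" statement and its "cocore" co-statement.

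For the first claim—every $\G$-invariant kernel $K$ contains a core—I use that by \ref{prop:poltype}(iii) the set of extreme points of $[C\cap L]$ (which lies in $C\cap L$ by \ref{lemma:facial}) decomposes into finitely many $\G$-orbits; pick representatives $e_1,\dots,e_r\in C$. Since $K+C\subset K$, $K$ is nonempty, and each $e_i$ lies in the open cone $C$, for each $i$ there exists $\lambda_i>0$ with $\lambda_i e_i\in K$, and for any $\lambda>\max_i\lambda_i$ one has $\lambda e_i=\lambda_i e_i+(\lambda-\lambda_i)e_i\in K+C\subset K$. By $\G$-invariance and convexity this gives $\lambda\cdot[\text{ext.\ pts.}]\subset K$. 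To reach the full core, for $\mu>\lambda$ and any $y=q+c_0\in[C\cap L]=[\text{ext.\ pts.}]+\bar C$ I write $\mu y=\lambda q+\bigl((\mu-\lambda)q+\mu c_0\bigr)$: the second summand lies in $C+\bar C=C$ because $q\in C$, and hence $\mu[C\cap L]\subset K+C\subset K$—an explicit core inside $K$. The statement that every $\G$-invariant kernel $K$ is contained in a cocore then follows by duality: $K^\vee$ is a $\G$-invariant kernel for $C^\circ$ (for the contragradient action), so by the preceding it contains a core $K''$ for $C^\circ$; since $\vee$ reverses inclusions and scales inversely, and $K^{\vee\vee}=\bar K$, one obtains $K\subset\bar K\subset (K'')^\vee$, and $(K'')^\vee$ is a cocore for $C$ by \ref{def:core}.

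For the explicit cocore, set $K_0:=[(\bar C-\{0\})\cap L]+\bar C$. It is closed by \ref{lemma:facial}, and $0\notin K_0$ because $0$ is an extreme point of the nondegenerate cone $\bar C$; thus $K_0$ is a closed kernel. I will show that $K_0$ is comparable to the cocore $[C^\circ\cap L^*]^\vee$ from \ref{def:core}. The inclusion $K_0\subset[C^\circ\cap L^*]^\vee$ is the integrality estimate: any $\xi\in C^\circ\cap L^*$ is strictly positive on $\bar C-\{0\}$ and integer-valued on $L$, so $\xi\ge 1$ on $(\bar C-\{0\})\cap L$ while $\xi\ge 0$ on $\bar C$. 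For the reverse inclusion up to a scalar, I observe that the extreme points of $[C^\circ\cap L^*]^\vee$ are rational: locally in any \rp\ cone $\Pi\subset C_+$, the intersection $\Pi\cap[C^\circ\cap L^*]^\vee$ is a rational polyhedron by \ref{prop:cocore}, so extreme points are intersections of rational facet hyperplanes $\xi(x)=1$. These extreme points lie in $\bar C-\{0\}$, and being rational they lie in $C_+=\G\cdot\Pi$ for a fundamental polyhedral $\Pi$ from \ref{prop:poltype}(i); since only finitely many of them lie in $\Pi$, $\G$-equivariance gives finitely many $\G$-orbits overall. Choosing a common integer denominator $k$ for orbit representatives, every extreme point of $k[C^\circ\cap L^*]^\vee$ lies in $(\bar C-\{0\})\cap L$, whence \ref{lemma:facial} yields $k[C^\circ\cap L^*]^\vee\subset K_0$. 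Since also $kK_0\subset K_0$ (passing to the sublattice $kL\subset L$ only shrinks the convex hull), comparability with $\lambda=k$ is established and $K_0$ is a cocore. The final statement is the dual: applying the preceding to the polyhedral triple $(V^*,C^\circ,L^*)$ shows $[(C^*-\{0\})\cap L^*]+C^*$ is a cocore for $C^\circ$, and taking $\vee$ (which exchanges cores and cocores) gives that $((C^*-\{0\})\cap L^*)^\vee$ is a core for $C$.

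The main technical obstacle is the finiteness of $\G$-orbits on the extreme points of the cocore, for which the reduction to a $\G$-fundamental polyhedral cone and the local rational polyhedrality of \ref{prop:cocore} are essential; everything else is a routine combination of convexity and the kernel axiom $K+C\subset K$.
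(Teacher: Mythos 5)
Your first two claims are fine, and in fact you prove them in a slightly different order than the paper: you directly construct a multiple $\mu[C\cap L]\subset K$ by scaling the (finitely many $\Gamma$-orbits of) extreme points of $[C\cap L]$ into $K$, and then get the cocore containment by dualizing. The paper goes the other way: it first shows $\bar K\subset\lambda[C^\circ\cap L^*]^\vee$ by comparing $\bar K\cap\Pi$ with $\Pi\cap[C^\circ\cap L^*]^\vee$ inside a fundamental cone $\Pi$ (both rational polyhedra bounded away from $0$), and then dualizes via $K^{\vee\vee}=\bar K$. Both routes work; yours is a bit more explicit.

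The third claim, however, has a genuine gap. You assert that \emph{every} extreme point of $K':=[C^\circ\cap L^*]^\vee$ is rational, and you justify this by saying that inside any rational polyhedral cone $\Pi\subset C_+$ the intersection $\Pi\cap K'$ is a rational polyhedron (by Proposition~\ref{prop:cocore}). That argument only covers extreme points of $K'$ that happen to lie in $C_+=\bigcup_\gamma\gamma\Pi$. But $K'$ is \emph{not} contained in $C_+$ in general --- the paper's remark immediately after Proposition~\ref{prop:cores} constructs cocores with points outside $C_+$ --- so you cannot a priori rule out extreme points outside $C_+$, and the local‐polyhedrality lemma says nothing about those. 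The subsequent step ``being rational they lie in $C_+$'' thus presupposes exactly what is missing. The paper avoids this by never touching extreme points directly: it proves the inclusion $C\cap K'\subset\nu[\Lambda]$ on the open cone $C$ (via the Siegel property $\Gamma\cdot\Pi\supset C$), uses that $C\cap K'$ is dense in $K'$ (because $K'+C\subset K'$ and $\bar C+C=C$), and then invokes the closedness of $[\Lambda]+\bar C$ from Lemma~\ref{lemma:facial} to conclude $K'\subset\nu[\Lambda]+\bar C$. Some such density‐and‐closure step is needed; without it the finiteness of orbits of extreme points, and hence the comparability of $K_0$ with $K'$, is not established. Your final dualization step is fine once this is repaired.
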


\begin{proof} Choose a \rp\ cone $\Pi\subset C_+$ such that
$\G\cdot\Pi =C_+$. Let $K$ be a $\Gamma$-invariant kernel for $C$. 
Since $\Pi\cap(C^\circ \cap L^*)^\v$
contains the convex hull of $(\Pi -\{ 0\})\cap L$ and
$0\notin \bar K\cap \Pi$, there exists a
$\lambda >0$ such that $\bar K\cap \Pi \subset \lambda
(C^\circ \cap L^*)^\v$. This implies that the last set also
contains $\bar K\cap C$. As $\bar K\cap C$ is dense in
$\bar K$, it follows that it even contains $\bar K$.
Applying this to $K^\v$, we also find that 
$\bar K=K^{\v\v}$ contains a set of the form $\mu [C\cap L]$ for
some $\mu >0$. Hence  $K$ contains $2\mu [C\cap L]$.

Let $\Lambda$ denote the set of lattice points in
$\bar C-\{ 0 \}$. Clearly, $(C^\circ \cap L^*)^\v$ 
contains $\Lambda$ and hence also $[\Lambda ]+\bar C$. 
If $\nu >0$ is such that $\Pi \cap
(C^\circ\cap L^*)^\v\subset \nu [(\Pi -\{0 \})\cap L]$, then
$C\cap (C^\circ \cap L^*)^\v
\subset \nu [\Lambda ]$. Following Lemma \ref{lemma:facial}, $[\Lambda
]+\bar C$ is closed, and since $C\cap (C^\circ
\cap L^*)^\v$ is dense in 
$(C^\circ\cap L^*)^\v $, it follows
that the last space is contained in $\nu [\Lambda
]+\bar C$. This proves that $[\Lambda ]+\bar C$ is a
cocore for $C$. If we apply this to the dual
situation and dualize, we find that $((C^*-\{ 0\})\cap
L^*)^\v $ is a core for $C$.
\end{proof}

\begin{remark}
It is in general not true that a cocore is contained in $C_+$. 
To see this, suppose that there exist a face $F\not=\{
0\}$ of $C_+$, and a proper face $G$ of $\bar C$ which
contains $F$ and whose relative interior does not
contain any rational point.
Then no cocore is contained in $C_+$: if
$L\subset V(\QQ)$ is a lattice, choose $p\in \mathring
F\cap L$, so that $p$ belongs to the
typical cocore $K:=(C^\circ\cap L^*)^\v$.
Hence $K\supset p+\bar C\supset p+\mathring G$, 
and the last space is a nonempty
open subset of $\mathring G$ which by
assumption does not meet $C_+$.

To be more concrete, let $V$ be the space of
symmetric bilinear forms on $\RR^n$,
$n\geq 3$ with its standard rational structure,
and let $C$ be the cone of positive definite
forms. (We are in a special case of \ref{prop:groupexample} if we
take $\Gcal :=\PSL_n$.) Choose an
irrational line $l$ in $\RR^{n-1}\subset 
\RR^n$, and let $F$ resp. $G$ be the cone of
positive semi-definite forms on $\RR^n$ whose
nilspace contains $\RR^{n-1}$ resp. $l$. Then
$F$ is the half line spanned by $x_n^2$ and is a
face of $C_+$, whereas $G$ is a face of
$\bar C$ which contains $F$, but has no
rational points in its relative interior.
\end{remark}

So while a $\Gamma$-invariant kernel need not be contained in $C_+$, the following proposition
shows that its intersection with $C_+$ is quite nice (compare Proposition II 5.22 of \cite{amrt}). 

\begin{proposition}\label{prop:lrpkernel} 
Let $K$ be a $\Gamma$-invariant kernel for $C$. 
Then the following are equivalent:
\begin{enumerate}
\item[$(i)$] $K$ is \lrp\ in $C_+$ .
\item[$(ii)$] There exists a finite union
$S$ of $\Gamma $-orbits in $(\bar C-\{ 0\})\cap V(\QQ)$ 
such that $K\cap C_+=[S]+C_+$.
\item[$(i)^*$] $K^\v$ is \lrp\ in $C^\circ_+$.
\item[$(ii)^*$] There exists a finite union
$S^*$ of $\Gamma $-orbits in $(C^*-\{ 0\})\cap V(\QQ)$ 
such that $K^\v\cap C^\circ_+=[S^*]+C^\circ_+$.
\end{enumerate}
Moreover, if one of these conditions is
fulfilled, then we can take for $S$ the set $E$ of extreme points of $K$ and we have 
$\bar K=[E]+\bar C$ and  every bounded face
of $K$ is a rational polyhedron.
\end{proposition}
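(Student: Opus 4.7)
The plan is to run the cycle $(i)\Rightarrow (ii)\Rightarrow (i^*)\Rightarrow (ii^*)\Rightarrow (i)$. Since the quintuples $(V,L,C,K,S)$ and $(V^*,L^*,C^\circ,K^\vee,S^*)$ play perfectly symmetric roles, it is enough to prove the two implications $(ii)\Rightarrow (i^*)$ and $(i^*)\Rightarrow (ii^*)$; the remaining two are the very same arguments applied to the dual datum.

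For $(ii)\Rightarrow (i^*)$, I first scale $S$ so that $S\subset L$. By Lemma \ref{lemma:facial}, $K=[S]+\bar C$ is then closed and $K^\vee =\{\xi\in C^*:\xi(s)\ge 1\text{ for all }s\in S\}$. Given a \rp\ cone $\Pi^*\subset C^\circ_+$, I apply Lemma \ref{lemma:finitepol} with $P=\Pi^*$ and $\Phi=\{\xi\mapsto \xi(s)\}_{s\in S}$. The functions in $\Phi$ are linear and $\ge 0$ on $\Pi^*\subset C^*$; the sole extreme point of the cone $\Pi^*$ is $0$, and generators $\xi_1,\dots,\xi_l$ of $\Tr(\Pi^*)=\Pi^*$ may be chosen in $L^*$, whence $\{\xi_j(s):s\in S\}\subset \ZZ_{\ge 0}$ is discrete. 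The lemma then realizes $K^\vee\cap \Pi^*=\{\xi\in \Pi^*:\xi(s)\ge 1\ \forall s\in S\}$ as a rational polyhedron.

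For $(i^*)\Rightarrow (ii^*)$, note that $K^\vee$ is automatically closed, convex, and line-free (since $C^*$ is pointed), with recession cone exactly $C^*$. By the structure theorem for line-free closed convex sets, $K^\vee=[E^*]+C^*$ where $E^*$ is the set of extreme points. Each $e^*\in E^*$ is a vertex of the rational polyhedron $K^\vee\cap \Pi^*$ for any \rp\ cone $\Pi^*\ni e^*$ in $C^\circ_+$, so $E^*\subset (C^*-\{0\})\cap V^*(\QQ)$ is discrete and $\G$-invariant. Moreover $E^*$ is a $C^*$-antichain: if $e^*=\eta+v$ with $\eta\in K^\vee$ and $v\in C^*\setminus\{0\}$, then $e^*$ is the midpoint of the segment $[\eta,\eta+2v]\subset K^\vee$. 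Fix a \rp\ cone $\Pi^*_0\subset C^\circ_+$ with $\G\cdot \Pi^*_0=C^\circ_+$ (available because $(C^\circ,\G)$ is of polyhedral type, cf.\ \ref{prop:poltype}); the set $E^*\cap \Pi^*_0$ (scaled into $L^*$) sits inside the finitely generated semigroup $L^*\cap \Pi^*_0$ of Gordan's lemma. Lifting each element to its $\NN^m$-coordinates in a fixed generating set of that semigroup produces an antichain in $\NN^m$, because coordinatewise comparability of the lifts implies $C^*$-comparability; Dickson's lemma then forces $E^*\cap \Pi^*_0$ to be finite, whence $E^*=\G\cdot (E^*\cap \Pi^*_0)$ is a finite union of $\G$-orbits, and setting $S^*:=E^*$ yields $(ii^*)$.

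The ``moreover'' clause is immediate from Lemma \ref{lemma:facial} applied to $(ii)$: with $S\subset L$, the description $K=[S]+\bar C$ shows $K$ is closed and every face is of the form $[M]+F$ with $M\subset S$ and $F$ a face of $\bar C$; a bounded such face forces $F=\{0\}$ and $M$ finite (by discreteness of $S$), making it the rational polytope $[M]$. The step I expect to be most delicate is the closure issue hidden in $(i)\Rightarrow (ii)$: to obtain $K=[E]+\bar C$ rather than only $\bar K=[E]+\bar C$, one must first deduce from the lrp hypothesis that $K$ itself is closed, which is straightforward at limit points interior to $C_+$ but needs extra care at points of $\bar K$ lying on an irrational face of $\bar C$.
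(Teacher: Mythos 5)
Your plan to run the cycle \((i)\Rightarrow(ii)\Rightarrow(i^*)\Rightarrow(ii^*)\Rightarrow(i)\) and prove only two of the four arrows is fine, and your treatment of \((ii)\Rightarrow(i^*)\) via Lemma \ref{lemma:finitepol} is essentially the paper's. The trouble is in \((i^*)\Rightarrow(ii^*)\), which is the dual of \((i)\Rightarrow(ii)\) and is where the paper does its real work.

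Having written \(K^\vee=[E^*]+C^*\) via the structure theorem, you claim each \(e^*\in E^*\) is a vertex of the rational polyhedron \(K^\vee\cap\Pi^*\) for \emph{any} \rp\ cone \(\Pi^*\ni e^*\) in \(C^\circ_+\). But to produce such a \(\Pi^*\) you need \(e^*\in C^\circ_+\), and that is exactly what you are trying to prove: the lrp hypothesis only constrains \(K^\vee\cap C^\circ_+\), so \emph{a priori} an extreme point of the abstractly-defined closed convex set \(K^\vee\) could sit on \(C^*\setminus C^\circ_+\), on a face of \(C^*\) with no rational points in its relative interior (the Remark after Proposition \ref{prop:cores} shows that cocores genuinely do stick out of \(C^\circ_+\) in this way). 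Your closing paragraph identifies this very worry (``needs extra care at points of \(\bar K\) lying on an irrational face of \(\bar C\)'') but leaves it unresolved, so the argument is circular at the point where discreteness and rationality of \(E^*\) are claimed. Once that is granted, your Gordan-plus-Dickson antichain argument for finiteness of \(E^*\cap\Pi^*_0\) is correct, but it is also doing more work than necessary.

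The paper sidesteps the whole issue. Instead of working with the extreme points of \(K\) itself, it fixes a \rp\ cone \(\Pi\) with \(\Gamma\cdot\Pi=C_+\) (which exists by polyhedral type, \ref{prop:poltype}) and takes \(S_0\) to be the extreme points of \(K\cap\Pi\) — a rational polyhedron by \((i)\), so \(S_0\) is automatically finite and rational, with no circularity. Setting \(S:=\Gamma\cdot S_0\), one gets \(K\cap C_+=([S]+\bar C)\cap C_+\) by covering \(C_+\) with translates of \(\Pi\); then density of \(K\cap C\) in \(K\), together with closedness of \([S]+\bar C\) from Lemma \ref{lemma:facial}, forces \(K=[S]+\bar C\). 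Finiteness of the number of \(\Gamma\)-orbits in \(S\) is immediate (\(S_0\) is finite), so no Dickson-type argument is needed. In short: using the fundamental polyhedral cone rather than the global structure theorem is what simultaneously resolves the rationality of the generators and the finiteness of orbits, and that is the idea your proposal is missing.
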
 

\begin{proof} 
$(i)\Rightarrow (ii)$ plus the last clause: Let
$\Pi \subset C_+$ be a \rp\ cone such that $\G
\cdot \Pi =C_+$. Since $K\cap \Pi$ is a rational
polyhedron, the set $E_0$ of its extreme points is a 
finite set of rational points with the property that
$K\cap \Pi = ([E_0]+C_+)\cap \Pi$. So
if we let $E:= \Gamma \cdot E_0$, then $K\cap C_+=[E]+C_+$
and $E$ is the set of extreme points of $K\cap C_+$. 

Let $k\in \NN$ be such that $E_0\subset\frac{1}{k} L$.
Then $E\subset \frac{1}{k}L$, which shows that $E$ is
discrete in $V$. According to Lemma \ref{lemma:facial}, this implies
that $[E]+\bar C$ is closed in $V$ and that every bounded face of
$[E]+\bar C$ is spanned by a finite subset of $S$ and is therefore a rational 
polyhedron. In particular, it is a face of $K\cap C_+$. Since $K\cap C_+=[S]+C_+$
is dense in $K$, we have $\bar K=[S]+\bar C$ and so
every bounded face of $K$ is also one of $\bar K$. Hence it is of the stated form.

$(ii)\Rightarrow (i)^*$: Argueing as above we find that
$S$ is contained in some lattice in $V(\QQ)$.
We then conclude from Lemma \ref{lemma:finitepol} that $([S]+\bar
C)^\v$ is \lrp \ in $C_+$.

The proposition now follows from the proven
implications and their dual versions.
\end{proof}

\begin{definition}
We call a function $f:C_+\to \RR$ 
{\it admissible} if $f$ is continuous and for every
\rp \ cone $\Pi \subset C_+$, the set of $(x,t)\in
\Pi \times\RR$ with $f(x)\geq t$ is a \rp \ cone. So 
\[
C_f:=\{(x,t)\in C\times \RR\, |\, f(x)>t\}
\]
is an open nondegenerate convex cone in $V\times \RR$ with 
$C_{f,+}=\{(x,t)\in C_+\times\RR\, |\, f(x)\geq t\}$. 
The interest of such a function lies
in the fact that it determines a decomposition 
$\Sigma (f)$ of $C_+$: the members of this
decomposition are simply the projections of the faces
of $C_f$ which do not contain the negative $t$-axis.
An alternative characterization of $\Sigma (f)$ is
that it is the coarsest \lrp \ decomposition of $C_+$ with
the property that $f$ is linear on each member.
\end{definition}

We return to Example \ref{example:arrangement} and prove that the
decomposition described there comes from an
admissible function. Fix a maximal
member $\sigma\in\Sigma $, and let for every
$H\in \Hcal$, $\xi_H$ be the unique indivisible
element of $L^*$, which defines $H$ and is $\geq 0$ on
$\sigma $. For $x\in C_+$, we define
\[
f(x)=\sum_{H\in \Hcal}\min\{ \xi _H(x),0\}.
\]
The sum involves at most a finite number of nonzero
terms, since at most finitely many $H\in \Hcal$
will separate $x$ from $\sigma$. It is easily
verified that $f$ is admissible and that $\Sigma
(f)=\Sigma $. Notice that $f$ transforms
under $\gamma\in\G$ as follows: $f\gamma ^{-1}= f+\sum_{H\in
\Hcal(\gamma )} \xi_H$, 
where $\Hcal(\gamma)$ denotes the collection of
$H\in \Hcal$ which separate $\gamma^{-1}(\sigma )$ from $\sigma $.
So $f$ is not $\Gamma $-invariant
(unless $\Hcal=\emptyset$), but $\gamma \mapsto
f\gamma ^{-1}-f$ is a 1-cocycle on $\Gamma$ with
values in the $\Gamma$-representation $V(\QQ)^*$ (in fact, even in $L^*$).  
But if this cocycle happens to be a coboundary, then by definition there exists an $\rho \in V^*(\QQ)$ such that  
$f-\rho$ is $\G$-invariant. Observe that this function is also admissible and defines the same 
decomposition as $f$. This phenomenon occurs in the theory of generalized root systems, where
$\rho$ appears as what some authors call a `Weyl vector'.

\medskip
Interesting examples of $\Gamma $-invariant
admissible functions (and hence of $\Gamma$-invariant
\lrp\ decompositions) are obtained from
$\Gamma$-invariant \lrp\ kernels:

\begin{lemma} 
Let K be a  \lrp\ kernel for
$C^\circ$, invariant under $\Gamma$. Then every $x\in C_+$ has a minimum  on
$K$, and if we denote this minimum by $f_K(x)$,
then $f_K$ is a $\Gamma$-invariant admissible
function on $C_+$ and $K^\v\cap C_+=\{x\in
C_+|f_K(x)\geq 1\}$.
\end{lemma}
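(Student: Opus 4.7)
My plan is to exploit the explicit description of locally rationally polyhedral kernels provided by Proposition \ref{prop:lrpkernel}, so that $f_K(x)$ becomes the infimum of a $\Gamma$-invariant discrete family of linear forms on $V$, and then on each rational polyhedral cone $\Pi \subset C_+$ to apply Lemma \ref{lemma:finitepol} in order to replace this infimum by a minimum over a finite subset. Admissibility and attainment of the minimum will both fall out of this reduction.

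For the first step I will apply Proposition \ref{prop:lrpkernel} to $K$ (which is a kernel for $C^\circ$, playing the role of $C$): this yields $K = [T] + \overline{C^\circ}$ for a $\Gamma$-invariant finite union of $\Gamma$-orbits $T \subset (\overline{C^\circ} - \{0\}) \cap V(\QQ)$, contained in $\tfrac{1}{k}L^*$ for some $k \in \NN$. For $x \in C_+ \subset \bar C$ and any decomposition $\xi = s + \eta \in [T] + \overline{C^\circ}$, one has $\eta(x) \geq 0$ with equality at $\eta = 0$, so
\[
\inf_{\xi \in K}\xi(x) = \inf_{t \in T}t(x).
\]

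The core of the argument is to apply Lemma \ref{lemma:finitepol} with $A = V$, $P = \Pi$, $\Phi = T$ for each rational polyhedral cone $\Pi \subset C_+$. The hypotheses check out: $T \subset \overline{C^\circ}$ is $\geq 0$ on $\Pi \subset \bar C$; the cone $\Pi$ has $0$ as its unique extreme point (as $\Pi \subset \bar C$ is nondegenerate), and every $t \in T$ vanishes there; and for a finite set of rational generators $v_1,\ldots,v_l$ of $\Tr(\Pi) = \Pi$, each set $\{t(v_j) : t \in T\}$ lies in a cyclic subgroup of $\RR_{\geq 0}$ and is therefore discrete. The lemma then produces a finite subset $T_\Pi \subset T$ together with a finite rational polyhedral decomposition of $\Pi$ on whose cells the value of $\min_{t \in T}t$ is realized by members of $T_\Pi$. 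Consequently $f_K(x) := \min_{t \in T} t(x)$ is attained on $\Pi$, and since any point of $C_+$ lies in some such $\Pi$ (e.g.\ the rational polyhedral cone generated by any finite rational convex combination representing it), the minimum is attained throughout $C_+$. Moreover on $\Pi$ one has $f_K = \min_{t \in T_\Pi}t$, so
\[
\{(x,s) \in \Pi \times \RR : f_K(x) \geq s\} = \bigcap_{t \in T_\Pi}\{(x,s) \in \Pi \times \RR : t(x) \geq s\}
\]
is a rational polyhedral cone, and $f_K|\Pi$ is piecewise linear hence continuous; combined with the concavity and positive $1$-homogeneity of $f_K$ this yields continuity on all of $C_+$, so $f_K$ is admissible.

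The remaining assertions are formal: since $\gamma^* K = K$ for the contragradient action,
\[
f_K(\gamma x) = \min_{\xi \in K}\xi(\gamma x) = \min_{\xi \in K}(\gamma^*\xi)(x) = \min_{\xi' \in K}\xi'(x) = f_K(x),
\]
giving $\Gamma$-invariance, and unwinding $K^\vee = \{x \in V : \xi(x) \geq 1\ \forall\, \xi \in K\}$ immediately produces $K^\vee \cap C_+ = \{x \in C_+ : f_K(x) \geq 1\}$. The main obstacle is converting the abstract infimum over the infinite set $K$ into an attained minimum over a finite set; this is exactly what Lemma \ref{lemma:finitepol} delivers once $K$ has been rewritten in the form $[T] + \overline{C^\circ}$.
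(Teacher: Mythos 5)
Your proof is correct and rests on the same underlying machinery as the paper's, but you reorganize it so that essentially everything is delivered by one application of Lemma \ref{lemma:finitepol}, whereas the paper splits the work. The paper obtains the existence of $\min_K x$ by writing $x$ as a nonnegative rational combination $x=\sum\lambda_\mu x_\mu$ with the $x_\mu\in\bar C\cap V(\QQ)$, noting that each $x_\mu(E)$ (with $E$ the set of extreme points of $K$) lies in a lattice, and concluding discreteness of $x(E)$; for admissibility it simply invokes the already-proved fact (Prop.\ \ref{prop:lrpkernel}/\ref{prop:cocore}) that $\Pi\cap K^\vee$ is a rational polyhedron and reads $f_K|\Pi$ off as the gauge function of that polyhedron. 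You instead substitute a $\Gamma$-orbit generating set $T$ for $E$ (both give $\inf_K x=\inf_T x$), and then apply Lemma \ref{lemma:finitepol} once on each $\Pi$ to extract a finite $T_\Pi$ that simultaneously realizes the minimum and cuts out a finite rational polyhedral decomposition; this buys a slightly more self-contained and uniform argument at the cost of re-deriving what the paper can cite. One small point worth making explicit: you verify the discreteness hypothesis of Lemma \ref{lemma:finitepol} only at the apex $0$ and at a finite set of rational generators of $\Pi$, while the lemma is stated for all $p\in P$ and $t\in\Tr(P)$. This is harmless because the proof of Lemma \ref{lemma:finitepol} only uses the extreme points and a chosen finite generating set of $\Tr(P)$, but to conform to the statement you should either note that observation or check discreteness for arbitrary $p\in\Pi$ by writing $p=\sum\lambda_jv_j$ and using properness of $(a_j)\mapsto\sum\lambda_ja_j$ on $\RR^l_{\geq0}$, exactly as the paper does in its own existence-of-minimum step.
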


\begin{proof} Let $E$ denote the set of extreme points of
$K$. By Proposition \ref{prop:lrpkernel}, $\bar K=[E]+\bar C$, and so it follows that for $x\in
C_+$, $\inf_K x=\inf_{\bar K} x=\inf_E x$. Write $x=\lambda_1x_1+\cdots
+\lambda_mx_m$ with $x_{\mu }$ a rational
point of $C_+$ and $\lambda_{\mu }\geq 0$. Since $E$
is contained in a lattice in $V(\QQ)^*$, $x_{\mu
}(E)$ will be a discrete subset of $\RR_{\geq 0}$, $\mu =1,\dots ,m$. 
Hence the same is true for $x(E)$. In particular, 
$x(E)$ has a minimum.

Now let $\Pi $ be a \rp\ cone in $C_+$. Then $\Pi
\cap K^\v$ is a rational polyhedron (which may
be empty). Let $\phi :\Pi \rightarrow \RR_{\geq
0}$ be the function characterized by $\phi (\lambda
x)=\lambda \phi (x)$, 
$x\in \Pi ,\lambda \in \RR_{\geq 0}$, and 
$\{x\in \Pi | \phi (x)\geq 1\}=\Pi
\cap K^\v$. Then
the set of $(x,t)\in \Pi \times \RR$ with $\phi
(x)\geq t$ is a \rp\ cone, and it is clear that 
$\phi =f_K|\Pi $. So $f_K$ is admissible and 
$K^\v\cap C_+=\{x\in C_+|f_K(x)\geq 1\}$.  
\end{proof}
    
Let us now consider the special case when $K$ is a
$\Gamma$-invariant \lrp\ core  for $C^\circ$. Then
$K^\v$ is a $\G$-invariant \lrp\ cocore for
$C$. 
For every \rp\ cone
$\Pi\subset C_+$, $K^\v\subset \Pi$ is a rational polyhedron
which meets each extremal ray of $\Pi$ (for $K^\v$ is
comparable with the standard cocore  $[C^\circ\cap L^*]^\v$,
which  has that property by Proposition \ref{prop:rpdec}). 
It is then easily seen that the bounded faces of $K^\v\subset \Pi$
lie on bounded faces of $K^\v$. So the cone spanned by the union of
the bounded faces of $K^\v$ coincides with $C_+$.

According to Proposition \ref{prop:lrpkernel} every bounded face of $K^\v$ is
a rational polyhedron, so that 
$\Sigma (K):=\Sigma (f_{K^\v})$
is in fact a \rp\ decomposition of $C_+$. Moreover,
the faces of $K$ parameterize in a bijective
manner the  faces of $\Sigma (f_{K^\v})$
by assigning to a face $P$ of $K$ the cone
$\sigma (P)$ of $x\in V$ with
the property that $x|K$ assumes its infimum
on all of $P$. In particular, $C_+=\cup \sigma (P)$
is the set of $x\in V$ which have a minimum on $K$.
Notice that $P\mapsto \sigma (P)$ reverses inclusions
and that $\dim P+\dim\sigma (P)=\dim V$. This generalizes
the construction of $\Sigma (C,L)$ of \ref{prop:rpdec}, for the
latter is obtained if we take $K=[C^{\circ }\cap
L^*]$.

\begin{application}[Construction of a polyhedral 
$\Gamma $-fundamental domain in $C_+$]\label{app:fundomain}
Choose $\xi\in C^\circ \cap V^*(\QQ)$. Then
it follows from Proposition \ref{prop:lrpkernel} that $K=[\Gamma \xi]+C^*$ is
a $\G$-invariant \lrp\ kernel. If $\lambda \in
\NN$ is such that $\lambda \xi \in L^*$, then
$\lambda K\subset [C^\circ\cap L^*]$ and so by
Proposition \ref{prop:cores} $K$ is a core for $C^\circ$. Every
extreme point  of $K$ corresponds to a maximal
element of $\Sigma (K)$. Since $\G\xi$ is the
set of extreme points of $K$, it follows that
$\G$ is transitive on the collection of
maximal members of $\Sigma (K)$. So
\[
\sigma :=\sigma (\{ \xi \} )=\{x\in C_+\, |\, \xi (\gamma
x)\geq \xi (x)\; \text{ for all }\; \gamma \in
\Gamma \}
\]
is a  \rp\ cone with the property that $\G\cdot \sigma =C_+$ 
and $\gamma (\sigma )\cap \mathring\sigma = \emptyset$ if 
$\gamma \in \Gamma$ does not fix $\xi$. In particular, 
$\sigma $ is a fundamental domain in $C_+$ if 
$\G_{\xi }=\{1 \}$. It also follows that
$C_+$ is just the set of $x\in \bar C$ which
have a minimum on $\G x$.

If we take $\xi \in \mathring F\cap V^*(\QQ)$, where
$F$ is a proper face of $C^{\circ }_+ -\{0\}$, then the
corresponding decomposition $\Sigma (K)$ is
also of interest. Again, $\G$ is then
transitive on the maximal members of $\Sigma (K)$,
and the stabilizer of $\sigma (\{ \xi \} )$ (which
is one such member) is $\Gamma _{\xi}$. Notice
that $\Gamma _{\xi }$ contains $Z_{\Gamma }(F)$ as
a subgroup of finite index; in general this will
be an infinite group.
\end{application}

We finally mention two consequences of having a polyhedral fundamental domain.

\begin{corollary}
The group $\G$ is finitely presented.
\end{corollary}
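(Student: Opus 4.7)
The strategy is to apply a Poincar\'e polyhedron-type argument to the rationally polyhedral fundamental domain supplied by Application \ref{app:fundomain}. I would first choose $\xi\in C^{\circ}\cap V^*(\QQ)$ whose stabilizer in $\G$ is trivial; such $\xi$ exists because the fixed-point loci of the countably many nonidentity elements of $\G$ are proper rational linear subspaces of $V^*$, and the rational points of the open cone $C^{\circ}$ are not contained in any countable union of such subspaces. The resulting cone $\sigma:=\{x\in C_+\mid \xi(\gamma x)\ge \xi(x)\text{ for all }\gamma\in\G\}$ is then a rationally polyhedral strict fundamental domain for $\G$ on $C_+$. Since $\G\cdot\sigma=C_+$ has full dimension, $\sigma$ is itself top-dimensional, so its relative interior $\dot\sigma$ lies in the topological interior of $C_+$, which is $C$.

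Next I would enumerate $\tau_1,\dots,\tau_N$, the codimension-one faces of $\sigma$ whose relative interior meets $C$; by rational polyhedrality of $\sigma$ there are only finitely many. For each $\tau_i$ the smallest face of $C_+$ whose relative interior contains $\dot\tau_i$ is $C_+$ itself, and $Z_\G(C_+)$ is trivial because $\G$ acts faithfully on $V$ and $C_+$ spans $V$. The Siegel property \ref{prop:siegel}, combined with the strict fundamental domain property, then yields a unique $\gamma_i\in\G\setminus\{1\}$ with $\gamma_i\sigma\cap\sigma\supseteq\tau_i$. Set $S:=\{\gamma_1^{\pm 1},\dots,\gamma_N^{\pm 1}\}$.

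To see that $S$ generates $\G$, pick $\gamma\in\G$ and join a point of $\dot\sigma$ to a point of $\gamma\dot\sigma$ by a line segment inside the open convex (hence simply connected) cone $C$; after a generic perturbation the segment crosses only codimension-one faces of the $\G$-translates of $\sigma$. Proper discontinuity of the $\G$-action on $C$ (again from \ref{prop:siegel}) makes the family of translates locally finite in $C$, so only finitely many translates $\sigma=\delta_0\sigma,\delta_1\sigma,\dots,\delta_k\sigma=\gamma\sigma$ meet the segment; consecutive pairs share a codimension-one face, which forces $\delta_{j-1}^{-1}\delta_j\in S$ and exhibits $\gamma$ as a word in $S$.

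Finally, I would take $\rho_1,\dots,\rho_M$ to be the codimension-two faces of $\sigma$ whose relative interior meets $C$; again finitely many. Around each $\rho_j$ the local tiling of $C$ by $\G$-translates of $\sigma$ is a finite cyclic arrangement, and reading off the side-pairings around the cycle produces a word $r_j=1$ in $S$. The classical Poincar\'e polyhedron theorem, applicable because $C$ is simply connected and the local polyhedral tiling has finite cell stabilizers inside $C$, then yields the finite presentation $\G=\langle S\mid r_1,\dots,r_M\rangle$. The principal subtlety is that $\sigma$ may possess faces on the ideal boundary $\bar C\setminus C$ whose stabilizers in $\G$ are infinite; the remedy is that such faces never enter the tiling of the open cone $C$, so they contribute neither generators nor relations to the presentation.
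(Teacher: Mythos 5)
Your argument is the same Poincar\'e--Macbeath tiling argument that the paper uses: take a rational polyhedral fundamental domain, read off side-pairing generators from its codimension-one faces meeting $C$, and obtain a complete set of relations from its codimension-two faces, using that $C$ is open, convex, and hence simply connected. You make the argument more concrete by insisting on a \emph{strict} fundamental domain via Application \ref{app:fundomain}, whereas the paper simply takes any rationally polyhedral cone $\Pi$ with $\G\cdot\Pi=C_+$ (no strictness required) and invokes the tiling argument in the form ``connectivity gives generation by codimension-one face-pairings; simple connectivity gives relations from codimension-two faces.''

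One justification in your proof is genuinely wrong: the assertion that ``the rational points of the open cone $C^{\circ}$ are not contained in any countable union of \emph{(proper rational)} subspaces'' is false, since $V^*(\QQ)$ is itself a countable union of rational lines. The conclusion you want -- existence of $\xi\in C^{\circ}\cap V^*(\QQ)$ with $\G_\xi=\{1\}$ -- is nevertheless correct, but you should argue via the Siegel property \ref{prop:siegel}: choosing a full-dimensional rationally polyhedral cone $\Pi\subset C^{\circ}_+$ with $\G\cdot\Pi=C^{\circ}_+$, the set $\{\gamma\in\G:\gamma\Pi\cap\Pi\ne\emptyset\}$ is \emph{finite}, any $\xi\in\dot\Pi$ has $\G_\xi$ inside this finite set, and a \emph{finite} union of proper rational subspaces cannot contain $\dot\Pi\cap V^*(\QQ)$. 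Alternatively, by Proposition \ref{prop:commens} one may first pass to a torsion-free finite-index subgroup (as in the next corollary's proof), for which every point-stabilizer in $C^{\circ}$ is automatically trivial; since finite presentation is inherited from a finite-index subgroup, this also suffices. With that repair, your proof is correct and in substance identical to the paper's.
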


\begin{proof} 
Let $\Pi$ be a \rp\ cone in $C_+$ such
that $\Gamma\cdot\Pi =C_+$. So $\G\cdot (\Pi
\cap C)=C$. As is well-known (and easy to prove), 
the mere fact that $C$ is connected now implies that
$\G$ is generated by the $\gamma \in \G$ 
for which $\gamma (\Pi)\cap \Pi$ is a codimension one face of $\Pi$ which meets $C$. This is clearly a finite set.  
Similarly, the fact that $C$ is simply connected implies that
a complete set of relations among these generators is indexed by the codimension two faces 
of $\Pi$ which meet $C$.
\end{proof}

The other consequence involves the property VFL for groups (which stands for having \emph{Virtuellement  une r\'esolution Libre de type Finie}). Recall that a (discrete) group $G$ is said to be VFL of dimension $\le d$  if there exists a subgroup $H\subset G$ of finite index such that the trivial $H$-module $\ZZ$ admits a resolution  of length  $\le d$ by free finite rank $\ZZ[H]$-modules.

We show that $\G$  has this property. This is based on a standard construction, which we briefly recall. Consider the  $\G$-invariant decomposition $\Sigma$ of $C_+$ constructed  from the $\G$-stable lattice $L\subset V(\QQ)$ in \ref{app:fundomain}.
It has a  canonical ``barycentric'' subdivision  defined as follows: every member $\sigma\in\Sigma$, being a rational polyhedral cone,  has finitely many extremal rays. The sum of the integral generators of these rays spans a ray $R_\sigma$ with $\mathring R_\sigma\subset \mathring\sigma$. Now $\Sigma$ is naturally refined by a decomposition $\Sigma'$ whose members
$\not=\{ 0\}$ are indexed by the strictly monotonous sequences $\sigma_\pt:= (\sigma_0\supsetneq\sigma_1\supsetneq\cdots \supsetneq \sigma_k\not= 0)$ in $\Sigma$, the associated polyhedral cone being $\la \sigma_\pt\ra:=R_{\sigma_0}+\cdots +R_{\sigma_k}$. Notice that if $\sigma_k$ meets $C$, then $\la \sigma_\pt\ra \subset C\cup\{ 0\}$ so that its projectivization $P(\la \sigma \ra)$ is a polyhedron entirely contained in the open contractible $P(C)\subset P(V)$. If we denote by $\Sigma'_f$ the subcollection of $\sigma_\pt\in\Sigma'$ with that property, then  the union $P|\Sigma'_f|\subset P|\Sigma|$ of such polyhedra (often called the \emph{spine} of $\Sigma$) is 
a polyhedral complex of dimension $\le \dim V-1$ that is  invariant under $\G$ and whose polyhedral cells decompose into finitely many $\G$-equivalence classes.

There is a natural $\G$-equivariant deformation retraction of $P(C)$  onto this spine as follows. 
If $\sigma_\pt=(\sigma_0\supsetneq\sigma_1\supsetneq\cdots \supsetneq \sigma_k\not= 0)$ is any  strictly monotonous sequence in $\Sigma$ with $\sigma_0\cap C\not=\emptyset$,  then let $r\in\{ 0,\dots ,k\}$ be the highest index for which $\sigma_r$ still meets $C$ and denote by $\sigma_\pt^C$ the truncation  $\sigma_0\supsetneq\cdots \supsetneq \sigma_r$. There is a natural deformation retraction of the improper  polyhedron $P(\la\sigma_\pt\ra)\cap P(C)$ onto spinal polyhedron
 $P(\la\sigma^C_\pt\ra)$. It is compatible with inclusion and so this results in a $\G$-equivariant deformation retraction of $P(C)$ onto the spine $P| \Sigma'_f|$. In particular, $P| \Sigma'_f|$  is contractible.

\begin{corollary}\label{cor:vfl}
The group $\G$ is  \emph{of type VFL of dimension} $\le \dim V-1$.
\end{corollary}
\begin{proof}
Denote by $\G'$ the kernel of the representation of 
$\G$ on $L/3L$. A well-known theorem of Serre asserts that $\G'$ is torsion free. So $\G'$ acts freely  on $P(C)$  and hence also on $P| \Sigma'_f|$. Upon replacing $\G'$ by a smaller subgroup (still of finite index in $\G$) we may assume that the $\G'$-stabilizer of any $\sigma\in\Sigma$ which meets $C$ is trivial. The result is that the cells of $P| \Sigma'_f|$ have the same property. The associated chain complex therefore provides a  resolution of of length   $\le \dim V-1$ of the trivial $\G'$-module $\ZZ$ by free finite rank $\ZZ[\G']$-modules. 
\end{proof}

\begin{question}
Corollary \ref{cor:vfl} implies among other things that  the cohomology of $\G$ with values in a finite dimensional 
$\QQ$-vector space that is also a representation of $\G$ is finite dimensional. A case of particular interest is $H^1(\G, V)$ (which has a $\QQ$-structure for which $H^1(\G, V)(\QQ)=H^1(\G, V(\QQ))$). Any $c\in H^1(\G, V)$ is representable by a cocycle, i.e., a map $\g\in\G\mapsto c_\g\in V$ satisfying   
$c_{\g_1\g_2}=c_{\g_1} +\g_1(c_{\g_2})$. This defines an action of $\G$ on a copy $V_c$ of $V$
by affine-linear transformations defined by the rule $\g_c(v):= c_\g +\g (v)$ (so its linear part is the given action). The $\G$-action on $V_c$ has  fixed  point if and only if the class $c$ is zero.
We can make it depend linearly on $c$ by choosing representative cocyles  for a basis
of $H^1(\G, V)$ and then extending linearly the resulting actions. This yields an exact sequence of $\G$-representations
\[
0\to V\to\tilde V\to  H^1(\G, V)\to 0,
\]
where $\G$ acts of course trivially on $H^1(\G, V)$. It is  universal for that property
in a sense we don't bother to make precise. If we choose the basis in $H^1(\G, V)(\QQ)$ and let the representative cocycles take their values in $V(\QQ)$, then $\tilde V$ acquires a $\QQ$-structure  preserved by $\G$. We can even do better and take the basis in the image of $H^1(\G, L)\to H^1(G,V)$, let the representative cocycles take their values in $L$ and get a lattice $\tilde L$ in $\tilde V(\QQ)$ preserved by $\G$.

Does there exist an open nondegenerate convex cone $\tilde C$ in $\tilde V$ which forms with $\G$ a pair of polyhedral type and is such that $\tilde C_+$ contains $C_+$ as a face?
\end{question}

\section{The Stabilizer of a Face}

Throughout this section, we fix a polyhedral  triple 
$(V(\QQ),C,\G)$ in the sense
of Proposition \ref{prop:poltype} and a face $F$ of $C_+$. 
Our principal goal is to describe the structure of the 
$\G$-stabilizer of $F$.  

\medskip
We begin with a bit of notation. We let $F^\d$ stand
for the set of $\xi \in C^0_+$ which vanish on $F$.
This is clearly an exposed face of $C^0_+$ and
its annihilator contains $F$. We shall find that $F^{\d\d}=F$, 
but at this point it is not even  clear whether $F\not=C_+$ implies  $F^\d\not=\{ 0\}$.
We denote the linear span of $F$  in $V$ by $V_F$ and  write $V^F$ for
the annihilator of $F^\d$ (it will turn out that there is no conflict with
that same notation  used in Section 1). So we have a flag of $\QQ$ vector spaces defined over $\QQ$:
\[
0\subset V_F\subset V^F\subset V.
\]
We further put $T_F:= V^F/V_F$ and denote the projections
\[
\pi_F: V\to V/V_F,\quad \pi^F: V\to V/V^F,
\]
so that the latter is the composite of $\pi_F$ and the projection
\[
q_F: V/V_F\to V/V^F.
\]
Observe that we have a perfect duality $V/V^F\times V^*_{F^\d}\to \RR$ and that
under this duality  $\pi^F(C)$ is identified with the open dual 
of $F^\d$. It is in particular a nondegenerate convex cone.  
Let us begin with stating one of the main results of this section.
Denote by $N_\G(F)$ the $\G$-stabilizer of $F$. It acts on $F$ and $F^\d$ and so we have a
group homomorphism $N_\G(F)\to \G (F)\times \G (F^\d)$. 

The following theorem sums up the main content of this section.

\begin{theorem}\label{thm:stabilizerfacemain}
The image of the projection $N_\G(F)\to \G (F)\times \G (F^\d)$ is of finite index in the latter and the elements in  its  kernel  that act trivially on $T_F$ form a free abelian subgroup $U_\G(F)$ of finite index in that kernel. The action of $U_\G(F)$ on $V$ is $2$-step unipotent and is given by a unique homomorphism
\[
u\in U_\G(F)\mapsto \sigma_u\in \Hom (V/V_F,V^F)
\]
with the following properties:
\begin{enumerate} 
\item[($i$)] $\sigma_u$ maps $T_F$ to $V_F$ and the induced maps 
\[
j_u: V/V^F\to T_F \text{ resp. } k_u: T_F\to V_F
\]
are such that $u(x)=x+\sigma_u(x')+\frac{1}{2}k_uj_u (x'')$, where 
$x'$ resp.\ $x''$ denote the images of $x$ in $V/V_F$
resp.\ $V/V^F$,
\item[($ii$)] for $u,v\in U_\G(F)$, we have $k_uj_v=k_vj_u$,
\item[($iii$)] $k_uj_u$ maps $\pi^F(C)$ to $F-\{ 0\}$, unless $u=1$.
\end{enumerate} 
\end{theorem}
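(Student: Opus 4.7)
The plan is to analyze the kernel of $N_\G(F)\to \G(F)\times \G(F^\d)$, extract its ``unipotent'' part $U_\G(F)$, and then pin down (i)--(iii) by direct computation together with the fact that every element of $\G$ preserves $\bar C$ and a lattice. For any $\gamma$ in that kernel, $\gamma$ fixes $V_F$ pointwise and acts trivially on $V/V^F$ (which is dual to the linear span of $F^\d$), so $\gamma-1$ takes $V$ into $V^F$ and vanishes on $V_F$; hence it factors through a linear map $\tilde\tau_\gamma\colon V/V_F\to V^F$. The subgroup $U_\G(F)$ is singled out by the extra condition $\tilde\tau_u(T_F)\subset V_F$, and for $u\in U_\G(F)$ one reads off $k_u:=\tilde\tau_u|_{T_F}\colon T_F\to V_F$ together with $j_u\colon V/V^F\to T_F$ induced by $\tilde\tau_u$ modulo $V_F$. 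Setting $\sigma_u(x'):=\tilde\tau_u(x')-\half k_uj_u(x'')$ provides the normalization that makes (i) tautological.

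A direct computation shows $uv(x)=x+\tilde\tau_u(x')+\tilde\tau_v(x')+k_uj_v(x'')$, so $u\mapsto\sigma_u$ is additive precisely when (ii), $k_uj_v=k_vj_u$, holds. To prove (ii), the $2$-step nilpotent calculus gives $[u,v](x)=x+(k_uj_v-k_vj_u)(x'')$ and therefore $[u,v]^n(x)=x+n(k_uj_v-k_vj_u)(x'')$ for every $n\in\ZZ$; since $[u,v]^n\in\G$ preserves $\bar C$ and $\bar C$ is nondegenerate, $(k_uj_v-k_vj_u)(x'')\in\bar C\cap(-\bar C)=\{0\}$ for every $x\in C$, and as $\pi^F(C)$ is open in $V/V^F$ linearity forces $k_uj_v=k_vj_u$. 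In particular $U_\G(F)$ is abelian and $u\mapsto\sigma_u$ is a homomorphism into $\Hom(V/V_F,V^F)$.

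For (iii), induction gives $u^n(x)=x+n\sigma_u(x')+\frac{n^2}{2}k_uj_u(x'')$, so $n^{-2}u^n(x)\to\half k_uj_u(x'')$ as $n\to\infty$. Each $n^{-2}u^n(x)$ lies in $C\subset C_+$, and $C_+$ is closed by Theorem \ref{thm:bdlypol}, so the limit lies in $C_+\cap V_F$. I would verify separately that $C_+\cap V_F=F$: any $x\in C_+\cap V_F$ can be written as $f_1-f_2$ with $f_i\in F$ (since $V_F=F-F$), whence $f_1=\half(2x)+\half(2f_2)$ is the midpoint of $[2x,2f_2]\subset C_+$, and since $f_1\in F$ the face property forces $2x\in F$. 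Hence $k_uj_u(x'')\in F$. For non-vanishing, if $k_uj_u(x_0'')=0$ for some $x_0\in C$ then $u^n(x_0)=x_0+n\sigma_u(x_0')$ for all $n\in\ZZ$ and nondegeneracy of $\bar C$ forces $\sigma_u(x_0')=0$, so $u$ fixes $x_0\in C$; since $u$ is unipotent (hence of infinite order unless $u=1$) and $\G$ acts properly discontinuously by the Siegel property \ref{prop:siegel}, this forces $u=1$.

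It remains to verify the abstract group-theoretic claims. The map $u\mapsto\sigma_u$ is injective (since $u$ is reconstructed from $\sigma_u$ via (i)) and its image in $\Hom(V/V_F,V^F)$ is discrete by $\G$-invariance of a lattice, so $U_\G(F)$ is free abelian. The quotient of the kernel by $U_\G(F)$ embeds into $\GL(T_F)$ preserving an induced lattice and the cone structure on $T_F$ inherited from $C$, whence finiteness by a further Siegel-type argument. The main obstacle is the finite-index statement for the image of $N_\G(F)$ in $\G(F)\times\G(F^\d)$: I would induct on $\dim V$, using Proposition \ref{prop:inherit} to treat $(V_F(\QQ),\dot F,\G(F))$ and dually $(V^*_{F^\d}(\QQ),\dot F^\d,\G(F^\d))$ as polyhedral triples of strictly smaller dimension, and combining their inductive structure with reduction theory and the self-duality $F^{\d\d}=F$ (which must be established earlier in this section) to assemble enough simultaneous lifts of pairs $(\alpha,\beta)$ to $N_\G(F)$.
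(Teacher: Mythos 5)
Your computational core is sound and in fact mirrors the paper's own mechanism closely: you correctly extract $\tilde\tau_\gamma$, read off $j_u$, $k_u$, normalize by $\sigma_u := \tilde\tau_u - \tfrac{1}{2}k_uj_u\circ q_F$, prove $(ii)$ by iterating the commutator $[u,v]$ and invoking nondegeneracy of $\bar C$, and derive $(iii)$ from the $u^n$-formula. This is essentially how the paper's Proposition \ref{prop:unipotentgeneral} and the concluding computation in the proof of Theorem \ref{thm:stabilizerfacemain} proceed. But three ingredients you need are either wrong or missing.

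First, in your argument for $(iii)$ you invoke ``$C_+$ is closed by Theorem \ref{thm:bdlypol}''. That theorem asserts $[C\cap L]$ is closed; $C_+=[\bar C\cap V(\QQ)]$ is a different set and in general is \emph{not} closed (the remark after Proposition \ref{prop:cores} shows $C_+$ can be a proper subset of $\bar C$ that misses whole faces). Your limit argument therefore only lands $\tfrac{1}{2}k_uj_u(x'')$ in $\bar C\cap V_F=\bar F$, not in $C_+\cap V_F=F$. The paper avoids this issue by packaging the argument inside Proposition \ref{prop:unipotentgeneral} (with $C_0=\bar F$ as the recession cone of $D=A\cap C$ identified via Corollary \ref{cor:Fdagger}), and the passage to $F$ then exploits the fact that these maps are defined over $\QQ$; the step you wrote down would need to be replaced by something along these lines.

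Second, the finiteness claims are the hard structural part and your proposal for them does not work. To show that $U_\G(F)$ is of finite index in $Z_\G(F,F^\d)$, you suggest embedding the quotient in $\GL(T_F)$ and running a ``further Siegel-type argument'' on a cone ``inherited from $C$''. But $\bar C\cap V^F=\bar F$ is contained in $V_F$, so $\bar C$ projects to $\{0\}$ in $T_F$: there is no natural invariant cone in $T_F$ to Siegel against. The paper's Lemma \ref{lemma:unip} takes an entirely different route: it shows every eigenvalue of $\gamma\in Z_\G(F,F^\d)$ has absolute value one (using that a limiting direction of the iterates $\gamma^n$ in $C^*$ would produce a nonzero vector of $F^\d$ fixed only up to scaling, a contradiction), then applies integrality of the characteristic polynomial, the finiteness of cyclotomic polynomials of bounded degree, and Burnside. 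This step is not cosmetic; without it the theorem's structure statement collapses. Likewise, the finite-index claim for the image of $N_\G(F)$ in $\G(F)\times\G(F^\d)$ is proved in the paper by Corollary \ref{cor:faceimages}, which leverages Lemma \ref{lemma:stucturefaces} (projecting a $\G$-invariant \rp\ decomposition under $\pi^F$) plus Proposition \ref{prop:inherit}, not an induction on $\dim V$; your induction sketch does not explain how to lift pairs $(\alpha,\beta)$ simultaneously, which is exactly the content being proven. You also silently use $F^{\d\d}=F$ and the identification of $\pi^F(C)$ with the open dual of $F^\d$, which the paper develops first in Proposition \ref{prop:facestructures} and Corollary \ref{cor:Fdagger}; those need to be in place before your computation even makes sense.
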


\begin{example}
This theorem is well illustrated by the following basic example. Take for $V$ the space $\sym^2W$ of symmetric tensors in $W\otimes W$, where $W$ is a real finite dimensional vector space with a $\QQ$-structure  and let $C\subset V$ be the cone of positive ones. Then $C_+$  is the cone spanned by  the pure squares $w\otimes w$ with $w\in W(\QQ)$. Alternatively, $C_+$ consists of the semipositive symmetric tensors whose annihilator is defined over $\QQ$. So a face  $F$ of  $C_+$ is given by a subspace $W'\subset W$ defined over $\QQ$ and then consists of the semipositive elements in $\sym^2 W'$. We have $V_F=\sym^2W'$, 
$V^F=W'\circ W$ (i.e., the span of the tensors $w'\otimes w+w\otimes w'$, with $w'\in W'$), so that 
$T_F=V^F/V_F$ may be identified with $(W/W')\otimes W'$ and $V/V^F$ with $\sym^2(W/W')$.
The open dual $C^\circ$ is the cone of positive definite quadratic forms on $W$ and under this identification, the relative interior $F^\d$ may be identified  with the cone of positive definite quadratic forms on $W/W'$. The group $\GL (W)$ acts on $(V,C)$ and the stabilizer of $F$ is the  stabilizer of $W'$. The latter maps onto $\GL(W')\times \GL (W/W')$ (its Levi quotient) with kernel an abelian unipotent group $U (F)$ that can be identified with the vector group 
$\Hom (W/W', W')$. The map $\sigma : U (F)\to \Hom (V/V_F,V^F)$ is identified with the map 
\[
\Hom (W/W', W')\to \Hom (\sym^2W/\sym^2 W', W'\circ W),
\]
which assigns to $u\in \Hom (W/W', W')$ the map $\sym^2(W/W')\to W'\circ W$ characterized by
$w\otimes w+\sym^2W'\mapsto u(\bar w)\otimes w +w\otimes u(\bar w)$ (here $w\in W$ and $\bar w$ is its image in $W/W'$). Notice that this induces maps
\begin{align*}
j_u: \sym^2 (W/W')\to (W/W')\otimes W',\quad  & \bar w\otimes\bar w\mapsto \bar w\otimes u(\bar w)\\
 k_u:   (W/W')\otimes W'\to \sym^2W',\quad  & \bar w\otimes k\mapsto u(\bar w)\otimes k+ k\otimes u(\bar w).
\end{align*}
so that $k_uj_v=u\otimes v+v\otimes u: \sym^2 (W/W')\to \sym^2W'$. We note that if $(w_i)_i$ is a basis of $W$, then $\half k_uj_u=u\otimes u$ sends $\sum_i w_i\otimes w_i$ to $\sum_i u(w_i)\otimes u(w_i)$, which is zero only when $u=0$.

If $\G\subset \SL (W)$ is arithmetic, then we have a similar description for $\G$-stabilizer of $F$ (which is of course 
the  $\G$-stabilizer of $W'$). 
\end{example}

We shall denote the kernel of  $N_\G(F)\to \G (F)\times \G (F^\d)$ by 
$Z_\G(F\times F^\d)$ (this is in agreement with our notational convention if we let $\G$ act on $V\times V^*$ diagonally). Furthermore, $L$ stands for some $\G$-invariant lattice in $V(\QQ)$.

\begin{lemma}\label{lemma:stucturefaces} 
Let $\Sigma$ be a $\G$-invariant \lrp\ 
decomposition of $C_+$, and let $\sigma \in
\Sigma$ be such that $\mathring\sigma$ is open in $F$.
Then every point of $\pi_F(C_+)$ is in the
$\pi_F$-image of the relative interior of a unique 
member of $\Star_{\Sigma}(\sigma)$, and the projection $\pi_F$  
maps the members of $\Star_{\Sigma}(\sigma)$ onto a \lrp\ decomposition
$\pi_{F\, *}\Star_{\Sigma}(\sigma)$ of $\pi_F(C_+)$.

If $\Sigma$ is in fact \rp\  and  $P$ is a \rp\ cone in $\pi^F(C_+)$ whose  preimage 
in $\pi_F(C_+)$ is denoted $\tilde P$, then 
the restriction of $\pi^\d_{F\, *}\Star_{\Sigma}(\sigma)$ to 
$\tilde P$ has only finitely many $Z_\G(F\times F^\d)$-orbits.
\end{lemma}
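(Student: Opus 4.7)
The plan is to handle (i) by a local-fan argument at an interior point of $\sigma$ combined with cone invariance, and to obtain (ii) from the Siegel property \ref{prop:siegel} applied to the preimage of $P$ in $V$. Two preliminary observations will be used throughout. First, every $\tau\in\Star_\Sigma(\sigma)$ must have $\sigma$ as a face, because for $p\in\dot\sigma\subset\tau$ the unique cell of $\Sigma$ whose relative interior contains $p$ is $\sigma$, so closure of $\Sigma$ under faces forces $\sigma$ to be a face of $\tau$. Second, since $\dot\sigma$ is open in $F$ and $\dot F$ is open in $V_F$, I may pick $p\in\dot\sigma\cap\dot F$ (nonempty by density of $\dot F$ in $F$), and then a full $V_F$-neighborhood of $p$ lies inside $\sigma$, hence inside every $\tau\in\Star_\Sigma(\sigma)$.

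For (i), the second observation yields $V_F\subset T_p\tau$ for every $\tau\in\Star_\Sigma(\sigma)$. Condition (iii) of Definition \ref{def:lrp}, applied to a small rational polyhedral cone around $p$, ensures that only finitely many cells of $\Sigma$ pass through $p$, so the tangent cones $T_p\tau$ form a finite local fan at $p$; quotienting by the common subspace $V_F$ produces a fan in $V/V_F$. The identity $\pi_F(\tau)=\pi_F(T_p\tau)$, an easy consequence of $\tau$ being a cone and $p\in V_F$, then shows $\pi_{F\,*}\Star_\Sigma(\sigma)$ coincides with this quotient fan, so in particular the relative interiors are pairwise disjoint. For surjectivity onto $\pi_F(C_+)$, given $x\in\pi_F(C_+)$ with a lift $\tilde x\in C_+$, the points $y_t:=p+t\tilde x$ lie in $C_+$ and, for all small $t>0$, in a single $\dot\tau_0$ with $\tau_0\in\Star_\Sigma(\sigma)$; then $x=t^{-1}\pi_F(y_t)\in\pi_F(\dot\tau_0)$. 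Local rational polyhedrality follows by lifting a rational polyhedral $Q\subset\pi_F(C_+)$ to a rational polyhedral cone $\tilde Q\subset C_+$ (choosing rational lifts of the extreme rays of $Q$ within $\bar C\cap V(\QQ)$, available by a density argument modeled on Step 3 of the proof of Theorem \ref{thm:bdlypol}) and applying the lrp property of $\Sigma$ to $\tilde Q$.

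For (ii), the equivalence $\pi_F(\tau)\cap\tilde P\neq\emptyset\iff\tau\cap(\pi^F)^{-1}(P)\neq\emptyset$ reduces the claim to counting, modulo $Z_\G(F,F^\d)$, the cells of $\Star_\Sigma(\sigma)$ meeting the rational polyhedral cone $(\pi^F)^{-1}(P)\subset V$. Fix a rational polyhedral $\Pi\subset C_+$ with $\G\cdot\Pi=C_+$ (available by polyhedral type) and set $\Pi_P:=\Pi\cap(\pi^F)^{-1}(P)$; the Siegel property \ref{prop:siegel} bounds the cells of $\Sigma$ meeting $\Pi_P$, and any $\tau$ with $\tau\cap(\pi^F)^{-1}(P)\neq\emptyset$ can be brought into $\Pi_P$ by a suitable element of $\G$. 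Restricting to $\tau\in\Star_\Sigma(\sigma)$ constrains the translating element to permute the finite set of cells $\sigma'\in\Sigma$ whose relative interior is open in $F$; and since $Z_\G(F,F^\d)$ acts trivially on $V/V^F$, the condition that the translate lies over $P$ leaves only finitely many cosets modulo $Z_\G(F,F^\d)$. The main obstacle is precisely this final descent from the $\G$-level Siegel count to $Z_\G(F,F^\d)$-orbits, which requires simultaneously absorbing the finite $\G(F)$-ambiguity from the permutation of cells open in $F$ and the $\G(F^\d)$-ambiguity from permuting $\pi^F$-images meeting $P$, and verifying that what remains is carried by $Z_\G(F,F^\d)$ up to finite index.
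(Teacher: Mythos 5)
For part~(i), your local-fan argument at a point $p\in\dot\sigma\cap\dot F$ is a legitimate (slightly more elaborate) variant of the paper's approach. The paper instead picks, for a given $x\in C_+$, a rational polyhedral cone $\Pi$ meeting both $x$ and $\dot\sigma$, observes that $\Sigma|\Pi$ is a finite rational polyhedral decomposition, and takes $y\in\Pi\cap\dot\sigma$ with $x+y$ in the relative interior of a unique member of $\Star_\Sigma(\sigma)$; this gives both the existence and the independence of $y$. Your approach and the paper's are morally the same idea (push into the open cells along $\sigma$-directions), and both are terse on the locally rational polyhedral claim. One caution on your lifting step: a rational polyhedral cone $Q\subset\pi_F(C_+)$ does not obviously lift to a rational polyhedral cone $\tilde Q\subset C_+$ by lifting extreme rays alone, because a rational lift of a rational ray in $\pi_F(C_+)$ need not lie in $C_+$ --- recall the remark just after Corollary~\ref{cor:faceimages} that $\pi_F(C_+)$ can be strictly smaller than $\pi_F(C)_+$. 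Moreover, even with such a $\tilde Q$, the cells of $\Sigma|\tilde Q$ need not account for all of $\pi_F(\tau)\cap Q$; one really has to pick $\tilde Q$ so that it absorbs a full $\sigma$-direction, which your argument doesn't do. This part could be patched, but as written it has a hole.

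For part~(ii), there is a genuine gap, and you yourself identify it as \emph{the main obstacle}. Your plan is to invoke the Siegel property for the full group $\G$ acting on $C_+$ and then descend the resulting finiteness to a count of $Z_\G(F,F^\d)$-orbits, simultaneously absorbing $\G(F)$- and $\G(F^\d)$-ambiguities. That last descent is exactly where the content lies, and it is not supplied. The paper's route is structurally different and avoids the issue: it first applies Proposition~\ref{prop:inherit} (which is available at this point) to get that $\Star_\Sigma(\sigma)$ is already finite modulo $Z_\G(F)$, and then applies the Siegel property not to $\G$ but to the \emph{image of $Z_\G(F)$} acting on the lower-dimensional pair $(\pi^F(C)_+,\ \text{image of }Z_\G(F))$. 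Since $Z_\G(F,F^\d)$ is precisely the kernel of $Z_\G(F)\to\GL(V/V^F)$, this second step directly produces finiteness of the relevant cells modulo $Z_\G(F,F^\d)$, with no leftover ambiguity to absorb. You should replace your $\G$-level Siegel count by this two-step argument: Proposition~\ref{prop:inherit} to reduce to $Z_\G(F)$, then Siegel for the $\pi^F$-image of $Z_\G(F)$.
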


\begin{proof}
Given $x\in C_+$, choose a \rp\ cone $\Pi$ in $C_+$ which intersects 
$\mathring \sigma $ and contains $x$. Since $\Sigma |\Pi $ is a
finite decomposition into \rp\ cones, there is a 
$y\in \Pi\cap\mathring\sigma $ such that
$x+y$ is in the relative interior of a 
member of $\Star_{\Sigma}(\sigma)$. This member is
clearly independent of the choice of $y$.
The first part of the lemma now follows easily.

For last clause of the lemma we may assume that $\mathring P\subset \pi^F(C)$. It
suffices to show that the collection of 
$\tau \in \Star_{\Sigma }(\sigma )$ 
whose image in $\pi^F(C_+)$ meets $\tilde P$ is finite
modulo $Z_\G(F\times F^\d)$. But this follows from the fact that the
collection $\Star_{\Sigma}(\sigma)$ is finite modulo
$Z_\G(F)$ by Proposition \ref{prop:inherit} and the Siegel property of
the image of the latter group in $\pi^F(C)_+$.  
\end{proof}

\begin{corollary}\label{cor:faceimages}
We have  $\pi^F(C_+)=\pi^F(C)_+$ 
and the image of $Z_\G (F)$ in $\G(F^\d)$ is a subgroup of the 
latter of finite index (or equivalently, the image of
$N_{\Gamma }(F)$ in $\G(F)\times \G(F^\d)$ is a subgroup 
of finite index).
\end{corollary}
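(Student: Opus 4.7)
The plan is to apply Lemma~\ref{lemma:stucturefaces} together with Proposition~\ref{prop:inherit}, taking $\sigma=F$ and $\Sigma:=\Sigma(C,L)$, the rational polyhedral decomposition of Proposition~\ref{prop:rpdec}. Since $C_+=\sigma(\{0\})\in\Sigma$ and $\Sigma$ is closed under taking faces (Definition~\ref{def:lrp}), the face $F$ itself lies in $\Sigma$.

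For the first assertion, part~(i) of the lemma identifies $\pi^F(C_+)=q_F(\pi_F(C_+))=\bigcup_{\tau\in\Star_\Sigma(F)}\pi^F(\tau)$, and each $\pi^F(\tau)$ is the image of a rational polyhedral cone $\tau\subseteq\bar C$ under the $\QQ$-linear map $\pi^F$, hence a rational polyhedral cone in $\overline{\pi^F(C)}$ whose extreme rays are spanned by rational points; this gives $\pi^F(C_+)\subseteq\pi^F(C)_+$. For the reverse inclusion it suffices to exhibit every rational $\bar y\in\overline{\pi^F(C)}$ as $\pi^F(y)$ with $y\in\bar C\cap V(\QQ)$. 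Choosing a rational lift $y_0$ of $\bar y$ and any $c_0\in C\cap V(\QQ)$, for every $t>0$ the point $\pi^F(y_0+tc_0)$ is in $\pi^F(C)$ (open), so the $\QQ$-affine subspace $y_0+tc_0+V^F$ meets $C$ and hence contains rational points $z_t$; passing to the limit $t\to 0$ along rationals, and using the finiteness of the decomposition of $\pi_F(C_+)$ modulo $Z_\G(F)$ (Proposition~\ref{prop:inherit}) to ensure the $z_t$ stay within a fixed rational polyhedral subcone of $\bar C$, one extracts a rational point of $\bar C$ projecting to $\bar y$.

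For the second assertion I identify $V/V^F$ with the dual of the linear span of $F^\d$, so that $\pi^F(C)$ becomes the open dual of $F^\d$. The pair $(F^\d,\G(F^\d))$ is of polyhedral type by Proposition~\ref{prop:inherit} applied to $F^\d$ as a face in the dual decomposition $\Sigma(C^\circ,L^*)$; by the self-duality of polyhedral type built into Proposition-Definition~\ref{prop:poltype}, so is $(\pi^F(C),\G(F^\d))$. Let $H\subseteq\G(F^\d)$ denote the image of $Z_\G(F)$. By Proposition~\ref{prop:inherit} applied directly to $\sigma=F$, the star $\Star_\Sigma(F)$ decomposes into finitely many $Z_\G(F)$-equivalence classes with representatives $\tau_1,\dots,\tau_M$. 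Since $Z_\G(F)$ acts on $V/V^F$ through $H$, the cones $\pi^F(\tau)$ for $\tau\in\Star_\Sigma(F)$ form at most $M$ $H$-orbits, so $\Pi:=[\pi^F(\tau_1)\cup\cdots\cup\pi^F(\tau_M)]$ is a rational polyhedral cone in $\pi^F(C)_+=\pi^F(C_+)$ satisfying $H\cdot\Pi\supseteq\pi^F(C)$, which verifies criterion~$(ii)$ of Proposition-Definition~\ref{prop:poltype} for $(\pi^F(C),H)$.

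With both $(\pi^F(C),\G(F^\d))$ and $(\pi^F(C),H)$ of polyhedral type and $H\subseteq\G(F^\d)$, Proposition~\ref{prop:commens} forces $[\G(F^\d):H]<\infty$. The equivalence with the finite-index property of the image of $N_\G(F)\to\G(F)\times\G(F^\d)$ follows from the group-theoretic facts that this map has kernel $Z_\G(F,F^\d)$ and is surjective onto $\G(F)$ in the first factor, so the index of its image in the product equals $[\G(F^\d):H]$. The main obstacle I anticipate is the reverse inclusion in the first assertion: since $C_+$ need not be closed in $V$ (it may miss irrational points on exposed faces of $\bar C$), extracting rational lifts from $\overline{\pi^F(C)}$ demands careful use of the $Z_\G(F)$-orbit structure on $\Star_\Sigma(F)$ to control the limiting behavior of $z_t$ as $t\to 0$.
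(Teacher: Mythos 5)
Your treatment of the second assertion is essentially the paper's argument, and it is correct: choose finitely many $Z_\Gamma(F)$-representatives of $\Star_\Sigma(\sigma)$ via Proposition~\ref{prop:inherit}, take their union to get a rational polyhedral cone whose $\pi^F$-image together with the image $H$ of $Z_\Gamma(F)$ verifies criterion $(ii)$ of Proposition-Definition~\ref{prop:poltype}, and then invoke Proposition~\ref{prop:commens} (noting that $(\pi^F(C)_+,\Gamma(F^\dagger))$ is trivially of polyhedral type since it contains $H$ and the same polyhedral cone works). Two points deserve correction, though.

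First, a technical slip in the setup: you take $\sigma=F$ and assert $F\in\Sigma(C,L)$ because $\Sigma$ is closed under taking faces. But the members of $\Sigma(C,L)$ are the cones $\sigma(P)$ for faces $P$ of $[C^\circ\cap L^*]$, and a face $F$ of $C_+$ is in general \emph{not} such a member (closure under faces is closure under faces of \emph{members}, not of $C_+$). Lemma~\ref{lemma:stucturefaces} is stated precisely to accommodate this: one chooses $\sigma\in\Sigma$ with $\dot\sigma$ open in $F$, which exists because the relative interiors of the $\sigma\in\Sigma$ cover $C_+$ and hence also $\dot F$, with finitely many meeting any given polyhedral cone. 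The smallest face of $C_+$ containing such a $\sigma$ is then $F$, which is what Proposition~\ref{prop:inherit} needs.

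Second, and more substantially: your attempted direct proof of the inclusion $\pi^F(C)_+\subseteq\pi^F(C_+)$ by lifting a rational point of $\overline{\pi^F(C)}$ and passing to a limit as $t\to 0$ is not a proof; you flag this yourself, and indeed the limiting scheme does not obviously produce a \emph{rational} limit point of $\bar C$ (the $z_t$ need not converge, and even controlled by a fixed polyhedral subcone their limit could be irrational or fail to lie in $\bar C\cap V(\mathbb Q)$). The good news is that this whole limiting argument is unnecessary, because the equality $\pi^F(C_+)=\pi^F(C)_+$ is a free byproduct of the very verification of $(ii)$ that you carry out for the second assertion. You have $H\cdot\Pi=\pi^F(C_+)$ with $\Pi\subseteq\pi^F(C_+)\subseteq\pi^F(C)_+$ a rational polyhedral cone, and $H\cdot\Pi\supseteq\pi^F(C)$; the ``Moreover'' clause of Proposition-Definition~\ref{prop:poltype} then gives $H\cdot\Pi=\pi^F(C)_+$, so $\pi^F(C_+)=\pi^F(C)_+$. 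That is exactly how the paper derives the first assertion, and it should replace the limit argument.
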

\begin{proof}
Choose a $\G$-invariant \rp\ decomposition $\Sigma $.
It follows from Proposition \ref{prop:inherit} that there exists 
a rational polyhedral cone $\Pi$ in $C_+$ whose 
$Z_\G(F)$-orbit contains $|\Star_{\Sigma}(\sigma)|$.
Then $(i)$ of Lemma \ref{lemma:stucturefaces} implies that 
$Z_\G(F)\cdot \pi^F(\Pi)=\pi^F(C_+)$. The corollary now 
follows from \ref{prop:poltype}.
\end{proof}

\begin{remark} In contrast to first assertion of the
above Corollary, it may happen that $\pi_F(C)_+$ is
strictly greater than $\pi_F(C_+)$.
\end{remark}

\begin{proposition}\label{prop:facestructures} 
Let $G$ be a face of $C_+$ which contains $F$.
\begin{enumerate} 
\item[$(i)$] The common zero set of the set of 
rational linear forms on $V_G$ which
are $\geq 0$ on $G$ and vanish on $F$ is 
$V^F\cap V_G$.
\item[$(ii)$] The assignment $F\mapsto F^\d$ sets up bijection
between the faces of $C_+$ and those of
$C^\circ_+$ which reverses the inclusion relation. In particular,
$F^{\d\d}=F$ and $F$ is an exposed face of $C_+$.
\item[$(iii)$] The projections $\pi_F$ resp.\ $\pi^F$ 
map $G$ onto a face of $\pi_F(C_+)$ resp.\ 
$\pi^F(C_+)$ respectively; this sets up a
bijection between the collection of faces of $C_+$ which
contain $F$, the collection of faces of $\pi_F(C_+)$, and 
the collection of faces of $\pi^F(C_+)$.
\item[$(iv)$] The dual of $\pi^F(\mathring G)$
is naturally identified with the
closure of $\pi^\d_{G^\d}(F^\d)$.
\end{enumerate} 
\end{proposition}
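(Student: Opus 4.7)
The four items are tightly interconnected; the plan is to establish (ii) first and then derive (iii), (i), and (iv) from it using the rational polyhedral decomposition $\Sigma(C,L)$ of Proposition \ref{prop:rpdec}.

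The crux is (ii), for which I aim to produce a \emph{rational exposing form for $F$}: an element $\xi\in F^\d\cap V^*(\QQ)$ with $\{x\in C_+:\xi(x)=0\}=F$. Once such a $\xi$ is in hand, $F^{\d\d}\subseteq\{\xi=0\}\cap C_+=F$ immediately gives $F^{\d\d}=F$; the bijectivity and inclusion-reversal of $F\mapsto F^\d$ then follow symmetrically, applying the construction to faces of $C^\circ_+$ and using $(C^\circ)^\circ=C$. To build $\xi$: since $\dot F$ is open in $V_F$, it is a union of relative interiors of members of $\Sigma(C,L)$. Pick any $\sigma=\sigma(P)\in\Sigma(C,L)$ with $\dot\sigma\subseteq\dot F$ and $\dim\sigma=\dim V_F$, where $P$ is the corresponding face of $[C^\circ\cap L^*]$. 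Any rational $\xi$ in the relative interior of $P$ takes a constant value on $\sigma$; cone-scaling forces this constant to be $0$, so $\xi|V_F=0$ and $\xi\in F^\d\cap V^*(\QQ)$. The zero locus of a sufficiently generic such $\xi$ on $C_+\cap\Pi$ (for a rational polyhedral fundamental set $\Pi$) is exactly $\sigma(P)\cap\Pi$; summing these forms over the finitely many top-dimensional $\sigma\subseteq F$ meeting $\Pi$ (finite by Proposition \ref{prop:inherit}) yields a $\xi$ whose zero locus in $C_+$ is $F$. The delicate point, which I expect to be the main obstacle, is verifying that this summed form vanishes nowhere on $C_+$ outside $F$ and not merely on a generic piece.

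Given (ii), statement (iii) follows formally. Any face $G$ of $C_+$ containing $F$ has $G^\d\subseteq F^\d$, so every $\eta\in G^\d$ vanishes on $V_F$ (since $F\subseteq G$) and on $V^F$ (since $\eta\in F^\d$); it therefore descends to forms on $V/V_F$ and $V/V^F$ exposing $\pi_F(G)$ and $\pi^F(G)$ in the respective quotient cones. Injectivity uses $G=G^{\d\d}$: if $y\in C_+$ satisfies $\pi^F(y)\in\pi^F(G)$, picking $g\in G$ with $\pi^F(y)=\pi^F(g)$ gives $\xi(y)=\xi(y-g)+\xi(g)=0$ for every $\xi\in G^\d$ (using $\xi|V^F=0$ and $\xi|G=0$), whence $y\in G^{\d\d}=G$; the argument for $\pi_F$ is entirely analogous. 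Surjectivity is clear by pulling exposing forms back through the quotient map. For (iv), combining (iii) applied dually to $C^\circ_+$ (faces of $C^\circ_+$ containing $G^\d$ biject with faces of $\pi^\d_{G^\d}(C^\circ_+)$, with $F^\d\supseteq G^\d$ among them) with the duality $V/V^F\times V^*_{F^\d}\to\RR$ identifies the dual of $\pi^F(\dot G)$, namely $\{\eta\in V^*_{F^\d}:\eta|G\geq 0\}$, with the closure of $\pi^\d_{G^\d}(F^\d)$.

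Finally, for (i), apply (ii) to $F^\d$ as a face of $C^\circ_+$: the rational points $F^\d\cap V^*(\QQ)$ span the annihilator of $V^F$, so $V^F$ is itself defined over $\QQ$. The inclusion $V^F\cap V_G\subseteq Z$, with $Z$ the common zero set in (i), reduces to extending any rational $\phi\in V_G^*$ with $\phi|G\geq 0$ and $\phi|F=0$ to some $\xi\in F^\d\cap V^*(\QQ)$; this is accomplished by a Hahn--Banach-type extension to $V$, followed by rational approximation and a small rational perturbation by an element of $F^\d\cap V^*(\QQ)$ to restore strict positivity. After extension, $\xi|V^F=0$ forces $\phi|V^F\cap V_G=0$. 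The reverse inclusion $Z\subseteq V^F\cap V_G$ is immediate: if $x\in V_G\setminus V^F$, then (ii) applied to $F^\d$ produces some $\xi\in F^\d\cap V^*(\QQ)$ with $\xi(x)\ne 0$, and $\phi:=\xi|V_G$ witnesses $x\notin Z$.
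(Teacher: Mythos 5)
The plan inverts the paper's order of attack—(ii) first, then the rest—but the construction you propose for the rational exposing form of $F$ has a basic error that would derail it. You take $\sigma=\sigma(P)\in\Sigma(C,L)$ with $\dot\sigma\subseteq\dot F$ full-dimensional in $V_F$, pick a rational $\xi\in\dot P$, and claim that ``$\xi$ takes a constant value on $\sigma$; cone-scaling forces this constant to be $0$, so $\xi|V_F=0$.'' This is not what Proposition~\ref{prop:rpdec} gives. The definition of $\sigma(P)$ says that for each \emph{fixed} $x\in\sigma(P)$, the function $\xi'\mapsto\xi'(x)$ is constant as $\xi'$ ranges over $P$; it does \emph{not} say that $\xi(x)$ is constant as $x$ ranges over $\sigma(P)$. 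In fact, for $\xi\in\dot P$ and $x\in\dot\sigma$ one has $\xi(x)=\inf_{\xi'\in[C^\circ\cap L^*]}\xi'(x)$, which is a linear function of $x$ and is \emph{strictly positive} for every $x\in C_+\setminus\{0\}$: if $x\in C_+\cap\tfrac1nL$ then each $\xi'\in C^\circ\cap L^*$ gives $\xi'(x)\in\tfrac1n\ZZ_{>0}$, hence the infimum over the convex hull is $\ge\tfrac1n$, and general $x\in C_+$ reduces to this by convexity. So $\xi$ does \emph{not} vanish on $V_F$, it is not in $F^\d$, and the exposing-form construction never gets off the ground. (The difficulty you flagged about ``summing generic pieces'' is therefore downstream of a more fundamental problem.)

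The sketch of (i) also has a gap: after Hahn--Banach extension and rational approximation, you propose to ``restore strict positivity'' by adding a small $\eta\in F^\d\cap V^*(\QQ)$. But elements of $F^\d$ vanish on $V_F$, not on $V_G$, so adding $\eta$ changes the restriction to $V_G$ and you lose the equality $\xi|_{V_G}=\phi$ you were trying to preserve.

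For contrast, the paper proceeds in the opposite order and never produces an exposing form directly. It first proves (i) under the auxiliary hypothesis $G=G^{\d\d}$ by a genuine extension argument: Corollary~\ref{cor:faceimages} applied to the face $G^\d$ of $C^\circ_+$ shows that the restriction map $\pi^{G^\d}$ sends $C^\circ_+$ \emph{onto} $(\pi^{G^\d}C^\circ)_+$, which is exactly the needed statement that a rational form on $V_G$ nonnegative on $G$ and vanishing on $F$ extends to an element of $F^\d$. It then proves the special case $F^\d=\{0\}\Rightarrow F=C_+$ of (ii) using that $((C^*\setminus\{0\})\cap L^*)^\vee$ is a core contained in $C$ (Proposition~\ref{prop:cores}), and deduces general (ii) by applying (i) with $G=F^{\d\d}$. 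Parts (iii) and (iv) then come out along lines similar to what you sketched. The structural inputs doing the real work in the paper are thus the core/cocore machinery of Proposition~\ref{prop:cores} and the surjectivity in Corollary~\ref{cor:faceimages}, neither of which appears in your outline; the $\Sigma(C,L)$ decomposition, which you lean on, is not the right tool for isolating $F^\d$.
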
 
\begin{proof} We first prove $(i)$ under
the additional hypothesis that $G=G^{\d\d}$. (This 
assumption becomes superfluous once
we have proved $(ii)$.) We have to show that every
rational linear form $\xi $ on $V_G$ which is $\geq
0$ on $G$ extends to a rational linear form on $V$
which is $\geq 0$ on $C$. Since $G=G^{\d\d}$,
this follows from Corollary \ref{cor:faceimages} applied to 
$G^\d$: such $\xi$ a lies in  $\pi^G C^\circ_+)$. 

We next prove a special case of $(ii)$: We claim that if
$F^\d =\{ 0\} $, then $F=C_+$. Choose $x\in \mathring F\cap L$. Then for every
nonzero integral $\xi\in C^*$ we have $\xi (x)
\geq 1$ (otherwise $F^\d \not= \{ 0 \}$) and hence
$x\in [(C^*-\{ 0\} )\cap L^*]^\v$. According to Proposition
\ref{prop:cores} the last set is a core for $C$ and hence contained
in $C$. So $x\in C$ and hence $F=C_+$.

Now we prove $(ii)$ in general.  
Clearly, $F^{\d\d}=V^F\cap C_+\supset F$. We
may apply the above to $G:=F^{\d\d}$ and find
that there is no rational linear form on $V_G$ which is $\geq
0$ on $G$ and zero on $F$. Then $F=G$ by
the special case.

$(iii)$ Consider the face of $\pi^F(C_+)$ whose
relative interior contains $\pi^F(G)$. Its
preimage $H$ in $C_+$ is then a face  of
$C_+$ whose relative interior intersects 
$(V_G +V^F)\cap C_+$, and therefore also 
$V^G\cap C_+$ (for we have $V^G\supset V^F$). This last set is equal to $G$
(by $(ii)$). So $H=G$ and hence $\pi^F(G)$ is a face of   $\pi^F(C_+)$.
The assertion follows from this.

$(iv)$ By $(ii)$, $\pi^{G^\d}(C^{\circ })$ can be
regarded as the open dual of $\mathring G$. Then applying
$(ii)$ once more to the face $F$ of $G$ shows that
$\pi^F(\mathring G)$ can be
identified with the open dual of 
$\pi^{G^\d}(C^{\circ }_+) \cap \ann (F)$. By
$(iii)$, this last set is just $\pi^{G^\d}(F^\d)$.  
\end{proof}

\begin{corollary} 
Every rational linear form on the linear span of $F$ which is $\geq
0$ on $F$ extends to a rational
linear form on $V$ which is $\geq 0$ on $C$.
\end{corollary}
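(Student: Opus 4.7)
The plan is to derive this corollary from Proposition~\ref{prop:facestructures}(ii), which provides $F = F^{\d\d}$, combined with Corollary~\ref{cor:faceimages} applied to the dual triple $(V^*(\QQ), C^\circ, \G)$ at the face $F^\d$ of $C^\circ_+$.

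Since $F^{\d\d} = F$, the analog of the perfect pairing $V/V^F \times V^*_{F^\d} \to \RR$ introduced at the start of this section, read in the dual, identifies $V^*/\ann(F)$ canonically and over $\QQ$ with $V_F^*$. Under this identification the quotient map $\rho \colon V^* \to V^*/\ann(F) \cong V_F^*$ is just the restriction $\tilde\xi \mapsto \tilde\xi|_{V_F}$, and it agrees with the dual projection $\pi^{F^\d}$. Moreover $\rho(C^\circ)$ becomes the open dual of $F^{\d\d} = F$ in $V_F^*$, whose closure is the closed dual cone $F^* := \{\eta \in V_F^* : \eta|F \geq 0\}$. Applying Corollary~\ref{cor:faceimages} in the dual then yields $\rho(C^\circ_+) = \rho(C^\circ)_+$, i.e.\ the convex hull of the rational points of $F^*$; any rational $\xi \in V_F^*$ with $\xi|F \geq 0$ is such a point, so $\xi = \rho(\tilde\xi)$ for some $\tilde\xi \in C^\circ_+ \subseteq C^*$. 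This $\tilde\xi$ is an extension of $\xi$ that is $\geq 0$ on $C$, though \emph{a priori} not rational.

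To finish I would replace $\tilde\xi$ by a rational extension. By Carath\'eodory and the definition of $C^\circ_+$, write $\tilde\xi = \sum_{i=1}^m \lambda_i \tilde\xi_i$ with $\tilde\xi_i \in C^* \cap V^*(\QQ)$ and $\lambda_i \geq 0$. The set of $(\mu_i) \in \RR_{\geq 0}^m$ satisfying $\sum_i \mu_i (\tilde\xi_i|_{V_F}) = \xi$ is a non-empty polyhedron cut out by $\QQ$-linear (in)equalities (non-emptiness uses $(\mu_i) = (\lambda_i)$); hence it contains a rational point $(\mu_i')$, and the rational form $\tilde\xi' := \sum_i \mu_i' \tilde\xi_i$ lies in $C^*$ and restricts to $\xi$. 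The only real subtlety is this last rationalization, which turns the real extension supplied by Corollary~\ref{cor:faceimages} into the required rational one; it is handled cleanly because the fibre of $\rho$ over the rational point $\xi$ is a rational affine subspace.
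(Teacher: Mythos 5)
Your proof is correct and follows the same overall route as the paper's one-line argument: read the pairing $V/V^F\times V^*_{F^\d}\to\RR$ in the dual to identify $V^*/\ann(F)$ with $V_F^*$, use Proposition~\ref{prop:facestructures}(iii) to recognize $\pi^{F^\d}(C^\circ)$ as the open dual of $F$, and then invoke Corollary~\ref{cor:faceimages} to see that every rational point of the closed dual $F^*$ lies in $\pi^{F^\d}(C^\circ_+)$, hence lifts into $C^\circ_+\subset C^*$. The paper stops there, leaving the rationality of the lift implicit. You are right to flag this as the one nontrivial point: a rational affine fibre can meet a closed convex set in an irrational singleton, so ``nonempty fibre over a rational point'' does not by itself yield a rational lift. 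Your rationalization step --- expressing the real lift as a finite nonnegative combination of rational points of $C^*$ and then replacing the coefficient vector by a rational solution of the resulting $\QQ$-linear system --- closes this cleanly, since $C^\circ_+$ is by definition generated over $\RR_{\ge 0}$ by $C^*\cap V^*(\QQ)$ and $C^*$ is a cone. An equivalent route, closer in spirit to the paper, would be to go back into the proof of Corollary~\ref{cor:faceimages}: there the image is exhibited as $Z_\G(F^\d)\cdot\pi^{F^\d}(\Pi)$ for a \rp\ cone $\Pi$, so one may move the rational point $\xi$ by a rational $\gamma$ into $\pi^{F^\d}(\Pi)$, lift inside the rational polyhedron $\Pi\cap(\pi^{F^\d})^{-1}(\gamma^{-1}\xi)$, and translate back by $\gamma$. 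Either way the detail is genuine and worth recording; your version has the advantage of being independent of the internal structure of the proof of Corollary~\ref{cor:faceimages}.
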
 
\begin{proof}
This follows from the fact that 
$\pi^\d_{F^\d}$ maps  $C^\circ_+$ onto $(\pi^{F^\d}C^{\circ })_+$
(by Corollary \ref{cor:faceimages} and the fact that 
$\pi^{F^\d}C^\circ$ can be identified with 
the open dual of $\mathring F$ (by Proposition
\ref{prop:facestructures}-iii).    
\end{proof}

We shall need the following proposition.
 
\begin{proposition}\label{prop:Dbounded}
Let be given a real affine space $A$ of finite dimension, 
an affine lattice $A_\ZZ\subset A$,  an open convex subset $D$ of $A$ and
a group $\Delta$ of affine-linear transformations of $A$ which leave 
both $A_\ZZ$ and $D$ invariant. Assume that $\Delta$ has only a finitely many 
orbits in $A_\ZZ\cap D$. Then the asymptotic space of $D$ coincides with its 
recession cone: $\as (D)=\Tr (D)$ (i.e., $D +\as (D)=D$), $\as(D)$ is defined over $\QQ$,  
and  $\Delta$  acts on the affine space $A/\as(D)$ via a finite quotient.
\end{proposition}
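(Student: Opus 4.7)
The plan is to reduce the proposition to Theorem \ref{thm:bdlypol} via a projective cone construction, and then use induction on $\dim A$ for the equality $\Tr(D)=W$. Assume $L\cap D$ is nonempty (the empty case is excluded by implicit convention, as $D$ is intended to arise as an invariant fundamental domain that meets $L$). Fix $o\in L\cap D$ and set $V:=\Tr(A)\oplus\RR$ with lattice $L_V:=\Tr(L)\oplus\ZZ$. Identify $A$ with the affine hyperplane $V_1:=\Tr(A)\times\{1\}$ via $a\mapsto(a-o,1)$ and form the projective cone $\tilde C:=\RR_{>0}\cdot(D-o,1)\subset V$, which is open and nondegenerate (its second coordinate is strictly positive on $\tilde C$). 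The group $\Delta$ lifts to $\tilde\Delta\subset\GL(V)$ via $\tilde\delta(v,t):=(\vec\delta(v)+t(\delta(o)-o),t)$, where $\vec\delta$ denotes the linear part of $\delta$; this action preserves $\tilde C$, $L_V$, and each height hyperplane. Theorem \ref{thm:bdlypol} then gives $K:=[\tilde C\cap L_V]$ as a closed, boundedly rationally polyhedral subset of $V$; a convexity argument on heights (any convex combination $\sum\alpha_i(l_i,m_i)=(v,1)$ with $m_i\in\ZZ_{\geq 1}$, $\alpha_i\geq 0$, $\sum\alpha_i=1$ forces $m_i=1$ whenever $\alpha_i>0$) shows that the height-$1$ slice $Y:=K\cap V_1\subset A$ coincides with $[L\cap D]$. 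Thus $Y$ is a closed, $\Delta$-invariant, locally rationally polyhedral convex subset of $\bar D$.

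The crux is the equality $\as(Y)=\as(D)=:W$ together with the rationality of $W$. The inclusion $\as(Y)\subseteq\as(D)$ follows from $Y\subseteq\bar D$. For the reverse, let $W':=\as(Y)$; an argument leveraging the polyhedral structure of $Y$, the $\tilde\Delta$-invariance, and the finite-orbit hypothesis will show $W'$ is $\QQ$-rational. Suppose $W'\subsetneq W$. Then $\pi_{W'}(L\cap D)\subseteq\pi_{W'}(Y)$ is bounded in $A/W'$, while $\pi_{W'}(D)$ is unbounded (since $W\not\subseteq W'$). Picking $\Delta$-orbit representatives $p_1,\ldots,p_n\in L\cap D$, each $\pi_{W'}(\Delta p_i)$ must be bounded; however, applying Lemma \ref{lemma:halflinenbhd} to half-lines in $\pi_{W'}(D)$ along $(W\setminus W')$-directions produces lattice points of $D$ whose $\pi_{W'}$-images escape to infinity, contradicting this boundedness. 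Hence $W'=W$, so $\as(D)$ is defined over $\QQ$.

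With $W$ rational, $\pi_W:A\to A/W$ sends $L$ to a lattice $\bar L$ and $D$ to a bounded open convex $\bar D$. The image $\bar\Delta:=\pi_W(\Delta)\subset\aff(A/W)$ preserves both, is therefore discrete (via $\bar L$) and bounded (via $\bar D$), and hence finite---this is the finite-quotient conclusion. Finally, $\Tr(D)=W$ as subspaces: the finite-index kernel $\Delta_0:=\ker(\Delta\to\bar\Delta)$ preserves each $W$-coset, and restricting the data to $p_1+W$ yields a proposition-hypothesis triple in the lower-dimensional affine space $p_1+W$; induction on $\dim A$ gives $\Tr(D\cap(p_1+W))=W$ as subspaces of $W$, which combined with the openness and convexity of $D$ upgrades to $\Tr(D)\supseteq W$, hence $\Tr(D)=W$. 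The main obstacle is the middle paragraph---establishing both the $\QQ$-rationality of $\as(Y)$ and the equality $\as(Y)=\as(D)$ under the finite-orbit hypothesis---which requires careful interlocking of the polyhedral structure of $Y$ inherited from Theorem \ref{thm:bdlypol}, the $\Delta$-action, and the openness of $D$.
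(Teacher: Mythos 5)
Your approach routes through the projective cone and Theorem \ref{thm:bdlypol}, which is quite different from the paper's proof (a direct combinatorial argument: one exhibits a $\Delta$-invariant family $\Phi$ of integral affine-linear functions bounded below on $D\cap L$, shows that the orbit-invariant $m(\Delta\cdot x):=\min_{\phi\in\Phi}\phi(x)$ takes arbitrarily large integer values along a segment in a ball $B+R$, and thereby contradicts the finite-orbit hypothesis; no cone over $D$ is ever formed). But your route contains a genuine gap that, as written, is fatal.

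Your claim that $\tilde C$ is nondegenerate is false. That the second coordinate is strictly positive on $\tilde C$ only shows that $\tilde C$ lies in an open half-space; it says nothing about whether the \emph{closure} of $\tilde C$ contains a line. In fact, for every $w\in\Tr(D)$ one has $(w,0)\in\overline{\tilde C}$, since for $d_0\in D$ the points $\tfrac1n\,(d_0+nw-o,\,1)=((d_0-o)/n + w,\,1/n)$ lie in $\tilde C$ and converge to $(w,0)$. Thus whenever $\Tr(D)$ contains a line, $\overline{\tilde C}$ does too, and $\tilde C$ is degenerate, so Theorem \ref{thm:bdlypol} is not applicable. But the proposition's own conclusion forces $\Tr(D)=\as(D)$, a linear subspace; hence whenever $D$ is unbounded (the only nontrivial case) $\Tr(D)$ contains a line and your cone is degenerate. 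In other words, your construction only makes Theorem \ref{thm:bdlypol} available in precisely the trivial case.

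Beyond this, the middle paragraph you flag as the crux is not actually an argument. The proposed use of Lemma \ref{lemma:halflinenbhd} requires the chosen half-line to avoid every proper rational subspace, and directions in $W\setminus W'$ may well be rational; and the step from ``$\pi_{W'}(D)$ unbounded'' to ``$\pi_{W'}$-images of lattice points of $D$ escape to infinity'' requires producing lattice points of $D$ in those unbounded directions, which is nontrivial (and is essentially what the contested conclusion is about). The final inductive step has a related difficulty: you would need to know that the slice $D\cap(p_1+W)$ has asymptotic space all of $W$, but Lemma \ref{lemma:recrec} (which gives this kind of statement) is formulated for cones $C$, not for general convex $D$. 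The paper sidesteps all of this by never introducing $\tilde C$: it shows directly that if $\Tr(D)$ is not a linear subspace then a ray $R\subset\Tr(D)$ whose opposite is not in $\Tr(D)$ forces the existence of lattice segments $[x_n,y_n]\subset D$ in which the invariant $m$ grows without bound.
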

  
\begin{proof} 
If $\as (D)=\{ 0\}$, then $D$ is bounded and there is nothing to show. 
We therefore assume that $\as (D)\not=\{ 0\}$. Then $\Tr (D)\not=0$  (\cite{rock}, Thm. 8.4).
We first show that $\Tr (D)$ is linear space.
If that is not the case, then let $R\subset\Tr (D)$ be ray such that line spanned by it is not contained in $\Tr (D)$. Choose an open ball $B\subset D$. Then  for any integer $n>0$, there exists in $B+R$ an interval $[x_n,y_n]$ whose end points lie in $A_\ZZ$ and which contains at least $n+1$ lattice points. Denote by $\Phi$ the collection of  affine linear maps $A\to \RR$ that are  integral on $A_\ZZ$, whose linear part is positive on $\Tr (D)-\as (D)$ and whose minimum on $D\cap A_\ZZ$ is $0$. This is a nonempty $\Delta$-invariant set and so  if $x\in D\cap A_\ZZ$, then the nonnegative integer  $\min_{f\in \Phi} f(x)$ only depends on the orbit $\Delta x$. We write $m(\Delta x)$ for this number. Now  for every $f\in \Phi$, $f(y_n)\ge  n+f(x_n)\ge  n$ and so $m(\Delta y_n)\ge n$. This contradicts the fact that $\Delta$ has finitely many orbits in $D\cap A_\ZZ$. 

The same argument shows that $\Tr (D)$ is defined over $\QQ$. 
If  $\pi : A\to A/\Tr (D)$ is the projection, then  $D=\pi^{-1}\pi D$, and so we must have $\Tr (\pi D)=\{ 0\}$. This implies that $\pi (D)$ is bounded, in other words $\as (D)\subset \Tr (D)$. The opposite inclusion is clear.
\end{proof}

\begin{corollary}\label{cor:Dall}
Suppose that in the situation of the previous proposition 
$\Delta$ acts on $D$ with compact fundamental domain. Then $D=A$.
\end{corollary}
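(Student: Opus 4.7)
The plan is to bootstrap from Proposition \ref{prop:Dbounded}: the stronger hypothesis of a compact fundamental domain will force the ``bounded'' conclusion to become ``trivial''.

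First, I would verify that the hypothesis of Proposition \ref{prop:Dbounded} is satisfied. Let $K \subset D$ be a compact fundamental domain for $\Delta$. Since $L$ is discrete and $K$ is compact, $L \cap K$ is finite; every $\Delta$-orbit in $L \cap D$ meets $K$ and hence $L \cap K$, so $\Delta$ has only finitely many orbits in $L \cap D$. Setting $W := \as(D)$, the proposition yields that $W = \Tr(D)$ is defined over $\QQ$, that $\Delta$ acts on $A/W$ through a finite quotient $\bar\Delta$ (with kernel $\Delta_0$ of finite index in $\Delta$), and---from the proof of that proposition---that $D = \pi^{-1}\pi(D)$ and $\pi(D)$ is bounded, where $\pi : A \to A/W$ is the projection.

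Next I would show that $\pi(D)$ is in fact \emph{compact}. Since $\Delta \cdot K \supset D$ and $\pi$ is $\Delta$-equivariant (with $\Delta$ acting on $A/W$ through $\bar\Delta$), we get
\[
\pi(D) = \pi(\Delta \cdot K) = \bar\Delta \cdot \pi(K),
\]
which is a finite union of $\bar\Delta$-translates of the compact set $\pi(K)$ and is therefore compact. On the other hand, $\pi$ is an open map and $D$ is open in $A$, so $\pi(D)$ is open in $A/W$. Thus $\pi(D)$ is a nonempty clopen subset of the connected affine space $A/W$, forcing $\pi(D) = A/W$. Combined with the boundedness of $\pi(D)$, this is only possible if $\dim(A/W) = 0$, i.e.\ $W = \Tr(A)$. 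But then $D = D + W = D + \Tr(A)$ and, since $D$ is nonempty, this last set is all of $A$. Hence $D = A$.

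There is no serious obstacle here: the argument is really just the observation that ``bounded open'' plus ``compactly covered by a finite group action'' collapses the quotient $A/W$ to a point. The only care needed is in noticing that $\pi(D)$ is clopen (which requires $\pi$ to be an open map and the image to be compact, both immediate) and that the $\Delta$-action on $A/W$ being finite turns the projection of the compact fundamental domain into a compact cover of $\pi(D)$, not merely a set of finitely many orbit representatives.
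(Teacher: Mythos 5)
Your proof is correct and follows essentially the same route as the paper: invoke Proposition \ref{prop:Dbounded}, observe that the image $\pi(D)$ in $A/W$ is simultaneously compact (being a finite union of $\bar\Delta$-translates of $\pi(K)$) and open, and conclude that $A/W$ must be a point. Your detour through clopenness and the explicit verification that the finite-orbit hypothesis of the proposition holds are spelled out more than in the paper, but the underlying argument is identical.
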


\begin{proof} 
Following Proposition \ref{prop:Dbounded}, $\as(D)$ is a subspace defined over $\QQ$, 
$D+\as (D)=D$ and $\Delta$ acts on $A/\as (D)$  via a finite group. As it acts with compact 
fundamental domain on $D/\as (D)$, it follows that $D/\as (D)$ is compact. But $D/\as (D)$ is open in
the affine space $A/\as (D)$, and so this can only happen if $D/\as (D)$ is a singleton, i.e. if $D=A$.
\end{proof} 

\begin{corollary}\label{cor:Fdagger}
The closure of $F^\d$ in $V^*$ is just the set of
$\xi \in C^*$ which vanish on $F$ (and hence is an exposed 
face of $C^*$), and $\pi_F(C)$ is invariant under the 
translations in $T_F$.
\end{corollary}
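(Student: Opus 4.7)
The plan is to treat the two assertions separately; both depend on Lemma \ref{lemma:ray}.

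For the translation invariance of $\pi_F(C)$, I would reduce to showing $C+V^F\subset C+V_F$, which is equivalent to $T_F$-invariance. Given $p\in C+V^F$, I pick $q\in\dot F\subset V_F$; Lemma \ref{lemma:ray} then produces $\lambda>0$ with $p+\lambda q\in C$, and so $p=(p+\lambda q)-\lambda q\in C+V_F$, using only that $V_F$ is a linear subspace containing $q$. This part is routine once the lemma is invoked.

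For the first claim, I write $F^\natural:=\{\xi\in C^*:\xi|_F=0\}$. The inclusion $\overline{F^\d}\subseteq F^\natural$ is automatic: $F^\d\subset C^\circ_+\subset C^*$, each $\xi\in F^\d$ satisfies $\xi|_F=0$, and $F^\natural$ is closed. The substance lies in the reverse inclusion and in exhibiting $\overline{F^\d}$ as exposed. The key step I plan to carry out is to show that every $\xi\in F^\natural$ vanishes not only on $V_F$ but on all of $V^F$. Once that is established, $F^\natural\subset\ann(V^F)=V^*_{F^\d}$ via the perfect pairing $V/V^F\times V^*_{F^\d}\to\RR$ recorded just before the theorem; under this pairing $F^\natural$ becomes the set of linear forms on $V/V^F$ that are $\geq 0$ on $\pi^F(C)$, i.e.\ the closed dual of $\pi^F(C)$ in $V^*_{F^\d}$. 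Since $\pi^F(C)$ is identified there with the open dual of $F^\d$, the biduality theorem for open nondegenerate convex cones gives that this closed dual is $\overline{F^\d}$; hence $\overline{F^\d}=F^\natural$. Finally, for any $w\in\dot F$ the linear form $\xi\mapsto\xi(w)$ is $\geq 0$ on $C^*$ and vanishes on $C^*$ exactly along $F^\natural$ (from $\xi\geq 0$ on $F$, $\xi(w)=0$ and $w\in\dot F$ one deduces $\xi|_{V_F}=0$), which exhibits $\overline{F^\d}$ as an exposed face of $C^*$.

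The hard part will be the step $F^\natural\subset\ann(V^F)$. Its proof is a short ratchet using Lemma \ref{lemma:ray}: given $\xi\in F^\natural$, $v\in V^F$, $c\in C$ and $q\in\dot F$, for every positive integer $n$ both $c\pm nv$ lie in $C+V^F$, so Lemma \ref{lemma:ray} yields $\lambda_n^\pm>0$ with $c\pm nv+\lambda_n^\pm q\in C$. Applying $\xi$ and using $\xi(q)=0$ gives $\xi(c)\pm n\xi(v)\geq 0$, whence $|\xi(v)|\leq\xi(c)/n$ for every $n$, forcing $\xi(v)=0$. The essential observation is that $V^F$ is a \emph{linear} subspace, so $\pm nv\in V^F$ for every integer $n$; this is precisely what promotes the one-sided conclusion of Lemma \ref{lemma:ray} into the two-sided estimate that pins $\xi(v)$ to zero.
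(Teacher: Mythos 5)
Your proof hinges, in both halves, on applying Lemma \ref{lemma:ray} to $p\in C+V^F$ with $V^F$ understood in the Section~5 sense, namely as $\ann(F^\d)$. But Lemma \ref{lemma:ray} is stated and proved in Section~1 with $V^F$ meaning the common zero set of \emph{all} $\xi\in C^*$ (real, not just those coming from $C^\circ_+$) that vanish on the face, i.e.\ your $\ann(F^\natural)$. Since $F^\d\subset F^\natural$ a priori, one only knows $\ann(F^\natural)\subset\ann(F^\d)$, so a vector in the Section~5 $V^F$ need not lie in the Section~1 $V^F$, and Lemma \ref{lemma:ray} does not apply to it. The paper is explicit about this pitfall: right after introducing $V^F:=\ann(F^\d)$ it remarks parenthetically that ``we shall later find that there is no conflict with that same notation used in Section 1,'' and the reconciliation is exactly the first assertion of Corollary \ref{cor:Fdagger}. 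So your argument assumes what it is trying to prove; the ``ratchet'' step proving $F^\natural\subset\ann(V^F)$ and the translation-invariance step both suffer from the same circularity.

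This is not a repairable slip within your framework: the statement is genuinely not a soft convex-geometric consequence of Lemma \ref{lemma:ray}. The content of the corollary is that the \emph{rational} cone $F^\d$ is large enough to detect $V^F$, and this is where the arithmetic enters. The paper's own proof of the translation invariance runs through Lemma \ref{lemma:stucturefaces}(ii) (finiteness of $\Star_\Sigma(\sigma)$ modulo $Z_\G(F,F^\d)$), Proposition \ref{prop:Dbounded} and Corollary \ref{cor:Dall}, all of which rest on $\G$ acting with finitely many orbits, i.e.\ on the polyhedral-type hypothesis. When $F$ sits under ``irrational'' faces of $\bar C$ (as in the Remark following Proposition \ref{prop:cores}), $F^\natural$ really can be strictly larger than the linear span of $F^\d$ for general cones, so some arithmetic input is unavoidable. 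If you want to argue along your lines, you would first need an independent proof that $\ann(F^\d)=\ann(F^\natural)$ --- but that is equivalent to what you set out to prove. The final steps of your outline (the biduality in $V/V^F$, the exposedness via evaluation at $w\in\dot F$, and the extraction of the second assertion from $C+V^F\subset C+V_F$) are fine once the key inclusion is in place; it is the key inclusion itself that cannot be obtained from Lemma \ref{lemma:ray} alone.
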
 

\begin{proof} 
Let $\tilde A$ be an affine subspace of $V$ parallel to $V^F$ 
which is defined over $\QQ$ and meets $C$. We let 
denote the images of $\tilde A$, $L\cap\tilde A$ and 
$C\cap\tilde A$  in $V/V_F$ by $A$, $A_\ZZ$ and $D$ respectively. 
It follows from Proposition \ref{prop:facestructures} 
that $D$ is also the image of  $C_+\cap\tilde A$. If $\Sigma$ and $\sigma$ are chosen as in Lemma \ref{lemma:stucturefaces},
then according to that lemma the restriction of $\pi_F(\Sigma _\sigma )$ 
to $D$ is a decomposition into compact rational polyhedra which is 
finite modulo $Z_\G(F\times F^\d)$. So  corollary \ref{cor:Dall} applies and we find that $D=A$.
This proves that $\pi_F(C)$ is invariant under the  translations in $T_F$.

The set of $\xi \in C^*$ which vanish on $F$ is an
exposed face of $C^*$ which contains $F^\d$.
Any such $\xi $ can be regarded as a linear form on
$V/V_F$ which is nonnegative on $\pi_FC$.
Since $\pi_FC$ is invariant under translations in $T_F$, it
follows that $\xi$ vanishes on $V^F$. So $\xi$ is in
the linear span of $F^\d$. The latter intersects $C^*$ in the closure of 
$F^\d$, and thus the corollary follows.
\end{proof}

\begin{lemma}\label{lemma:unip} 
The unipotent
elements in $Z_\G(F\times F^\d)$ form a normal
subgroup $U_\G (F)$ of finite index.
\end{lemma}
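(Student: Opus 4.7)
The plan is to recognize $U_\Gamma(F)$ as the kernel of the natural homomorphism
\[
\mu : \Delta := Z_\Gamma(F,F^\d) \to \GL(T_F), \quad \gamma \mapsto M_\gamma,
\]
sending $\gamma$ to its induced linear action on $T_F = V^F/V_F$. This is well-defined because every $\gamma\in\Delta$ preserves both $V_F$ (fixed pointwise, since $\gamma$ fixes $F$ pointwise) and $V^F$ (the annihilator of $V^*_{F^\d}$, preserved because $\gamma$ fixes $F^\d$ and hence its span pointwise). So $U_\Gamma(F) := \ker\mu$ is automatically normal in $\Delta$. Moreover, for $\gamma \in U_\Gamma(F)$, the action on each successive quotient of the flag $0\subset V_F\subset V^F\subset V$ is trivial: trivial on $V_F$ and $T_F$ by construction, and trivial on $V/V^F$ because the contragredient representation fixes $V^*_{F^\d} = (V^F)^\perp$ pointwise. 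Hence $(\gamma - I)V \subset V^F$ and $(\gamma - I)V^F \subset V_F$, so $(\gamma - I)^3 = 0$, confirming that $U_\Gamma(F)$ consists of two-step unipotent elements.

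The finite-index claim reduces to showing that $\bar\Delta := \mu(\Delta)$ is a finite subgroup of $\GL(T_F)$. Since $\bar\Delta$ preserves the lattice $L^F := (V^F\cap L)/(V_F\cap L)$ of $T_F$, it is discrete in $\GL(T_F)$. It therefore suffices to show that $\bar\Delta$ is contained in a compact subgroup---equivalently, that $\bar\Delta$ preserves a bounded open convex neighborhood of the origin in $T_F$.

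The plan is to construct such a neighborhood using the polyhedral geometry near $F$. Fix a $\Gamma$-invariant rational polyhedral decomposition $\Sigma$ of $C_+$ (Proposition \ref{prop:rpdec}) together with $\sigma\in\Sigma$ whose relative interior is open in $\dot F$. Pick a rational point $\bar y$ in the relative interior of $\pi^F(C)$ and consider the affine fiber $A := q_F^{-1}(\bar y)\subset V/V_F$, an affine copy of $T_F$. Because $\Delta$ acts trivially on $V/V^F$, the set $A$ is $\Delta$-invariant, and the affine action of $\Delta$ on $A$ has linear part $\bar\Delta$. The key observation is that, for any $\tau\in\Star_\Sigma(\sigma)$, the intersection $\tau\cap V^F$ lies in $C_+\cap V^F = F\subset V_F$ by Proposition \ref{prop:facestructures}(ii), so
\[
\pi_F(\tau)\cap T_F \;=\; \pi_F(\tau\cap V^F)\;\subset\; \pi_F(V_F)\;=\;\{0\}.
\]
Consequently the slice $A\cap \pi_F(\tau)$ is a bounded rational polyhedron. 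Combined with Lemma \ref{lemma:stucturefaces}(ii) applied to the rational ray $P:=\RR_{\ge 0}\bar y$, this gives a decomposition of the part of $A$ meeting $\pi_F(C_+)$ into bounded rational polyhedra with only finitely many $\Delta$-orbits.

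The main obstacle is to convert this finiteness into a bounded $\bar\Delta$-invariant open convex subset. The plan is to take a finite union of $\Delta$-orbit representatives among these bounded pieces, symmetrize around $\pi_F(y)$, and pass to the interior of the convex hull to obtain an open convex bounded subset $D\subset A$ that is $\Delta$-invariant modulo the translation subgroup $U_\Gamma(F)$-image, on whose affine lattice $\pi_F(L)\cap D$ the group $\Delta$ has only finitely many orbits. Applying Proposition \ref{prop:Dbounded} to this $D$ then forces $\as(D) = \Tr(D) = \{0\}$, so $\Delta$ acts on $A$ through a finite affine quotient; its linear part is $\bar\Delta$, which is therefore finite, completing the proof.
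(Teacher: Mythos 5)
The initial steps are sound: $U_\Gamma(F):=\ker\mu$ is automatically normal in $\Delta := Z_\Gamma(F,F^\d)$, and the observation that $(\gamma-I)^3=0$ for $\gamma\in\ker\mu$ via the flag $0\subset V_F\subset V^F\subset V$ is correct. The real work, then, is to show $\bar\Delta = \mu(\Delta)$ is finite, and this is where your argument breaks down.

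The sticking point is the sentence about obtaining ``an open convex bounded subset $D\subset A$ that is $\Delta$-invariant modulo the translation subgroup.'' Proposition~\ref{prop:Dbounded} requires $D$ to be genuinely $\Delta$-invariant, and no bounded $\Delta$-invariant $D\subset A$ can exist here: $\ker\mu$ acts on the fiber $A$ by translations (it kills the linear part on $T_F$), and --- as the subsequent Theorem~\ref{thm:stabilizerfacemain} asserts --- this translation subgroup is a lattice in $T_F$, so no bounded subset of $A$ is $\Delta$-invariant. Applying Proposition~\ref{prop:Dbounded} with $D = A$ instead gives only the vacuous conclusion that $\Delta$ acts on $A/T_F = \{\mathrm{pt}\}$ through a finite quotient. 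And the broader principle you are implicitly invoking --- that a group acting cocompactly on an affine space $A$ by lattice-preserving affine transformations must have finite linear part --- is false. For example, the group generated by $(x,y)\mapsto (x+1,y)$ and $(x,y)\mapsto (x+y,y+1)$ preserves $\ZZ^2$, acts cocompactly on $\RR^2$ (the orbit of the origin is all of $\ZZ^2$), yet its linear part is the infinite unipotent group $\left\{\left(\begin{smallmatrix}1&n\\0&1\end{smallmatrix}\right)\right\}_{n\in\ZZ}$. So cocompactness and lattice-invariance alone cannot force $\bar\Delta$ to be finite; you must use the constraint that $\Delta$ sits inside $\GL(V)$ and preserves the nondegenerate cone $C$. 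That is exactly what the paper's proof exploits: it first shows every eigenvalue of every $\gamma\in\Delta$ has absolute value one, by taking a ray in $C^*$ not contained in any proper eigenspace and letting $\gamma^n$ push it to a limiting ray, which by Corollary~\ref{cor:Fdagger} would have to land in $\bar{F^\d}$ --- impossible since $\gamma$ fixes $F^\d$ pointwise. Once all eigenvalues lie on the unit circle, integrality of the characteristic polynomial gives only finitely many possible eigenvalues, and Burnside's theorem applied to the composition factors of a $\Delta$-invariant Jordan--H\"older filtration of $V(\CC)$ yields that the subgroup acting trivially on all composition factors --- which is exactly the set of unipotent elements --- has finite index. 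That spectral argument has no analogue in your sketch, and without it the finiteness of $\bar\Delta$ does not follow.

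There is also a secondary slippage you should flag: you \emph{define} $U_\Gamma(F)$ as $\ker\mu$ and note its elements are unipotent, but the lemma concerns \emph{all} unipotent elements of $\Delta$. A priori a unipotent $\gamma\in\Delta$ could act as a nontrivial unipotent on $T_F$ and so lie outside $\ker\mu$. This gap would close automatically once $\bar\Delta$ is shown to be finite (a unipotent of finite order is the identity), so it is not independent of the main problem, but it is worth making the logic explicit.
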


\begin{proof} 
We first prove that the characteristic polynomial of 
any $\gamma\in Z_\G(F\times F^\d)$ is a product of cyclotomic polynomials. 
This suffices: since there are only finitely
many such polynomials of given degree, 
it follows that the set of eigenvalues of elements
of $Z_\G(F\times F^\d)$ is finite. Now choose a strictly
increasing (Jordan-H\"older) filtration
$0=W_0\subset W_1\subset \ldots \subset V(\CC)$
invariant under $Z_\G(F\times F^\d)$ such that the image $G_i$ of
$Z_\G(F\times F^\d)$ in $GL(W_i/W_{i-1})$ is irreducible.
Clearly, the set of traces of elements of $G_i$ is
finite and a well-known  fact of representation theory
(see for instance \cite{curtisreiner}, proof of Burnside's
theorem (36.1)) then implies that $G_i$ is
finite. Hence the group of $\gamma\in Z_\G(F\times F^\d)$ that act
trivially on the quotients $G_i/G_{i-1}$ is of finite
index in $\Gamma$ and coincides
with the set of its unipotent elements. (This
argument was pointed out to me by O.~Gabber.)

The characteristic polynomial of $\g\in  Z_\G(F\times F^\d)$
has integral coefficients and so will be a product of cyclotomic 
polynomials once we show that every eigenvalue of
$\gamma$ has absolute value one. Suppose this
is not so: let $m>1$ the maximal absolute value that 
occurs and denote by $W$ the corresponding eigenspace
of $\gamma$ in $V$. Since $\gamma$ acts
trivially on $V/V^F$, we have $W\subset V^F$. 
Now choose a half line in $C$ 
which is not contained
in a proper eigenspace of $\gamma$. Then the
translates of this half line under the positive powers
of  $\gamma$ have a limiting half line
contained in $W\cap \bar C$, and hence contained in
$V^F\cap \bar C$. According to Corollary \ref{cor:Fdagger}, this last
intersection equals the closure of $F$ in
$V$. So $W\cap V_F \not= \{0\}$. But this is
impossible as $\gamma$ leaves $V_F$ pointwise
fixed.
\end{proof}

It follows from Corollary \ref{cor:faceimages} and Lemma \ref{lemma:unip}
that `up to finite groups' $N_\G (F)$ is an extension of $\G(F)\times \G(F^\d)$ by $U_\G(F)$. 
We shall now concentrate on the action of the latter on
$V$. We will find among other things that this group 
is abelian. 

Most of our information is obtained via the following proposition. In this proposition we regard the space of rays in a vector space as the boundary (the sphere at infinity) of any affine space over that vector space. 

\begin{proposition}\label{prop:unipotentgeneral}
Let $A$ be a real affine space of finite dimension (with translation space denoted $T$), 
$T_0\subset T$ a linear subspace,  $C_0\subset T_0$ a closed nondegenerate convex cone and
$U$ a unipotent group of affine-linear transformations of $A$ which leaves 
$T_0$ pointwise fixed. We assume that (i) that $A/T_0$ is spanned by some $U$-orbit and (ii) that
there exists a nonempty open subset  $D\subset A$ with the property that for every $a\in D$ and $1\not=u\in U$, the 
rays $\{\RR_{\ge 0}(u^k(a)-a)\}_{u\in U}$ have a  limiting 
ray in $C_0$. Put $A':=A/T_0$ and $T':=T/T_0$  and 
$a\in A\mapsto a'\in A'$ resp. $t\in T\mapsto t'\in T'$ denote the obvious projections.

Then  $U$ acts faithfully on $A'$ as a group of translations which spans $T'$; in particular, $U$ acts trivially on $T'$. 
Moreover, there exists a unique map  
\[
\sigma :A'\times T'\to T
\]
 with the following properties.
\begin{enumerate}
\item[a)] $\sigma$ is affine-linear in the first variable and for every
$a\in A$, $\sigma_{a'}:T'\rightarrow T$ is a linear section of the projection $T\to T'$ 
and so $\sigma$ induces a bilinear map 
\[
d\sigma :T'\times T'\rightarrow T_0
\]
characterized 
by the property that for $a'\in A'$ and $t'_1,t'_2\in T'$, $\sigma (a'+t'_1,t'_2)=
\sigma (a',t'_1)+d\sigma (t'_1,t'_2)$.
\item[b)] $d\sigma$ is a $C_0$-\emph{positive} symmetric form in the sense that it is 
symmetric, and for every nonzero $t'\in T'$, we have $d\sigma (t',t')\in C_0 -\{ 0\} $, and  
\item[c)] if  $u\in U$ is identified with $[u]\in T'$, then for all $a\in A$,
\[
u(a)=a+\sigma (\pi (a),[u])+\tfrac{1}{2}d\sigma ([u],[u]).
\]
\end{enumerate}
\end{proposition}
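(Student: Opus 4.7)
The plan is to exploit unipotence of $U$ together with the ray-limit hypothesis to pin down first the group $U(\RR)$, then its action on $A$, and finally the map $\sigma$. I would first show that each $u\in U(\RR)$ acts on $A'$ as a translation. Because $u$ is unipotent and fixes $T_0$ pointwise, $n\mapsto u^n(a)$ is a polynomial in $n$ with values in $A$, and so $u^n(a)-a$ has a well-defined limit ray in $T$. If the induced action of $u$ on $A'$ had a non-trivial linear part, the leading term of $(u^n(a)-a)'\in T'$ would be non-zero, and hence the limit ray of $u^n(a)-a$ in $T$ would have non-zero projection to $T'$, contradicting the hypothesis $C_0\subset T_0$. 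So $u$ acts on $A'$ by a translation, giving a homomorphism $U(\RR)\to T'$, $u\mapsto [u]$. Its image is connected, and a connected subgroup of the vector group $T'$ is a linear subspace; since the image contains $\{[u]\mid u\in U\}$ which spans $T'$ over $\RR$ (by the hypothesis that some $U$-orbit spans $A'$ affinely), it equals $T'$.

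For injectivity, assume $[u]=0$. Then $u(a)-a=:c_u(a')$ lies in $T_0$ and depends only on $a'=\pi(a)$, and because $[u]=0$ the iterates satisfy $u^n(a)-a=n\,c_u(a')$, with limit ray through $c_u(a')$. The same reasoning applied to $u^{-1}$ (for which $c_{u^{-1}}=-c_u$, by a short calculation) produces $-c_u(a')$ as a limit ray, so both $c_u(a')$ and $-c_u(a')$ lie in $C_0$; nondegeneracy of $C_0$ forces $c_u(a')=0$ for all $a\in D$, hence $c_u\equiv 0$ and $u=1$. Thus $U(\RR)\cong T'$ as groups, so $U(\RR)$ is abelian and its Lie algebra $\mathfrak u$ is abelian. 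Each $n\in\mathfrak u$ has nilpotent linear part $\bar n$ vanishing on $T_0$ and (by the first step) inducing the zero map on $T/T_0$, so $\bar n$ factors through $T'$ as a map $\tilde n\colon T'\to T_0$ and satisfies $\bar n^2=0$; consequently $\exp(n)(a)=a+n(a)+\tfrac12\bar n(n(a))$. I then define
\[
\sigma(a',t'):=n_{t'}(a),\qquad t'\in T',\ a\in A\text{ with }\pi(a)=a',
\]
where $n_{t'}\in\mathfrak u$ is the unique element with $\exp(n_{t'})=u_{t'}$; this is well-defined because $\bar n_{t'}|_{T_0}=0$, and linear in $t'$ because the identification $\mathfrak u\cong T'$ is linear.

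Properties (a)--(c) are now routine. Affine-linearity of $\sigma$ in $a'$ comes from the affine structure of $n_{t'}$, and projecting the identity $u_{t'}(a)-a=\sigma(a',t')+\tfrac12\bar n_{t'}(\sigma(a',t'))$ to $T'$ gives the section property $\pi\sigma(a',t')=t'$, since the second summand lies in $T_0$ while the left side projects to $[u_{t'}]=t'$. A direct computation yields $d\sigma(t_1',t_2')=\tilde n_{t_2'}(t_1')$, so the symmetry clause of (b) is the statement that $\mathfrak u$ is abelian, for one computes $[n_1,n_2]=\tilde n_1(t_2')-\tilde n_2(t_1')$, which vanishes by step two. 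For the positivity clause and for (c), one evaluates
\[
u_{t'}^n(a)-a=n\,\sigma(a',t')+\tfrac12 n^2\,d\sigma(t',t');
\]
its limit ray lies through $d\sigma(t',t')$ when this is non-zero, so the ray-limit hypothesis forces $d\sigma(t',t')\in C_0$, while if $d\sigma(t',t')$ were zero for some $t'\neq 0$ the limit ray would lie through $\sigma(a',t')$, whose $T'$-projection $t'$ is non-zero, contradicting $C_0\subset T_0$. Formula (c) is exactly the $\exp$-identity above. Uniqueness: any difference $\eta$ of two such $\sigma$'s would satisfy $\eta(a',t')+\tfrac12 d\eta(t',t')\equiv 0$, which upon varying $a'$ forces $d\eta=0$ and then $\eta=0$. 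The delicate step is the second: establishing the injectivity of $U(\RR)\to T'$ via the ray-limit argument is what secures the commutativity and with it the symmetric bilinear form $d\sigma$; everything that follows is essentially 2-step Baker--Campbell--Hausdorff.
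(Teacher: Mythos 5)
The key claim in your Step~1 --- that if $\bar u|_{T'}\neq\mathrm{id}$ then the limit ray of $u^n(a)-a$ has nonzero image in $T'$ --- does not follow, and this is a genuine gap. Writing $e_j=(u-1)^j(a)$ and letting $d$ be the degree of the polynomial $n\mapsto u^n(a)-a$, the unique limiting ray (as $n\to+\infty$) is $\RR_{>0}e_d$, while the leading coefficient of the projected polynomial $(u^n(a)-a)'$ is $e_{d'}'$ for the possibly \emph{smaller} degree $d'$. Nothing a priori rules out a configuration with, say, $d'=3$, $e_3'\neq 0$, $e_4\in T_0\setminus\{0\}$, $e_5=0$: then $d=4$ is even, the two-sided limit ray is $\RR_{>0}e_4\subset T_0$, and the ray hypothesis is satisfied with zero $T'$-projection even though $\bar u|_{T'}$ is a nontrivial $2$-step unipotent. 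Your argument dismisses exactly this case by asserting the projection is nonzero, so it does not establish that $u$ acts on $A'$ as a translation. Since Step~1 is what licenses the claim that each $\bar n$ for $n\in\mathfrak u$ induces $0$ on $T/T_0$ (hence $\bar n^2=0$ and the $2$-step $\exp$ formula), everything downstream depends on it.

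The paper fills precisely this hole by inducting on $\dim T'$: it picks a $U$-invariant hyperplane $T_1$ with $T_0\subset T_1\subset T$, deduces from the induction hypothesis applied to $U_1=\ker(U\to T/T_1)$ and a $T_1$-coset $A_1$ meeting $D$ that $e_3\in T_0$, hence $e_4=0$ and $d\le 3$; then $d=3$ is excluded because the two opposite limit rays $\pm e_3$ cannot both lie in the nondegenerate $C_0$, and finally $e_2\in C_0\subset T_0$ forces the linear part on $T'$ to be trivial. That bootstrap is the content you would need to supply. Two smaller points: the ray hypothesis speaks of $U$-orbits, not $U(\RR)$-orbits, so (as the paper does, via a compact $K$ with $U(\RR)=U\cdot K$) you should justify passing to $U(\RR)$ before taking limits of one-parameter orbits; and your Step~1 argument, even if repaired, only controls $(\bar u-1)e_1$ for the particular $e_1=(u-1)(a)$, so you must vary $a$ over the open set $D$ to conclude $(\bar u-1)(T)\subset T_0$. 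Once Step~1 is secured, your Steps~2--3 and the Lie-algebraic construction of $\sigma$ via $\exp$ and BCH are correct and arguably cleaner than the paper's coordinate computation with a chosen linear section $s\colon T'\to T$; the symmetry of $d\sigma$ as commutativity of $\mathfrak u$ and the positivity argument match the paper's.
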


\begin{proof}

We use induction on $\dim (T')$.
To start the induction, assume $T'=\{ 0\}$. 
Then $U$ must act on $A$ as a group of translations. 
As $U$ preserves $D$, it follows that $U=\{ 1\}$ and we are done. 

From now on we assume $T'\not= \{ 0\}$ and $U\not= \{ 1\}$. Then $T_0\not= \{ 0\}$, for
the orbit of  a unipotent transformation  is either a singleton 
or has a limiting point at infinity.

Since $U$ is unipotent, we can find a $U$-invariant hyperplane $T_1$ of $T$
containing $T_0$. Then $U$ acts trivially on $T/T_1$ and hence acts on $A/T_1$
as a group of translations.   We denote the ensuing homomorphism 
$U\rightarrow T/T_1$ by $\alpha$ and write $U_1$ for its kernel. 
Choose a $T_1$-orbit $A_1$ in $A$ which intersects $D$. Clearly $U_1$ leaves
$A_1$ invariant and one verifies easily that the triple $(A_1,D\cap A_1,U_1)$
fulfills the  hypotheses of the proposition. So by induction $U_1$ acts
faithfully on $A'_1:=A_1/T_0$ as its full group of translations.

We can now prove the first assertion. 
Choose $a\in D$, and put
$e_i:=(u-1)^i(a)$. Then $e_1\in T$, $e_2\in T_1$,
$e_3\in T_0$, and $e_i=0$ for $i\geq 4$. So
\[
u^k(a)=a+\binom{k}{1}e_1+\binom{k}{2}e_2+\binom{k}{3}e_3
\]
for all $k\in \ZZ$. If $e_3\not= 0$, then the rays $\{ \RR_{\ge 0}(u^k(a)-a)\}_{k\ge 0}$ resp.\
$\{ \RR_{\ge 0}(u^{-k}(a)-a)\}_{k\ge 0}$ converge to $\RR_{\ge 0}e_3$ resp.\ $\RR_{\le 0}e_3$
and so $C_0$ would contain $\RR e_3$. This contradicts the nondegeneracy of $C_0$.
So $e_3=0$.  By a similar argument it follows that $e_2\in C_0$.  Since this is true for all $a\in D$, 
it follows that $u$ induces a translation in $A'$. If this translation is
trivial, then $e_1\in T_0$ and $e_2=0$. But then $C_0$ contains both $e_1$ and $-e_1$, and since
$C_0$ is nondegenerate, this can only happen when $e_1=0$, i.e., when $u=1$. 
This proves that $U$ acts faithfully on $A'$ as a translation group.  Since $A'$ is spanned by some $U$-orbit, this 
translation group must span $T'$.

To prove the remaining assertions, fix $a_0\in A$, and a linear section
$s:T'\to T$ of $T\to T'$. Then $A$ is
parameterized by
\[
(t',t_0)\in T'\times T_0\mapsto a_0+s(t')+t_0\in A.
\] 
In terms of this parameterization the action of $U$ on $A$ is then given by
\[
u(a_0+s(t')+t_0)=a_0+s(t'+[u])+t_0+\phi _u(a_0+s(t')),
\]
where $\phi_u$ is an affine-linear map from $A$ to $T_0$. The map $\phi_u$
factors over $A\to A'$ and is independent of $a_0$. So we can write 
$\phi_u(a)=\phi (a',u')$. Then the fact
that $u,v\in U$ commute implies the symmetry of $d\phi$. In particular, $\phi$
is linear in the second variable. Hence
for any $k\in \ZZ$, 
\[
u^k(a_0)=a_0+ks(u')+k\phi _u(a_0)+\tfrac{1}{2}k(k-1)d\phi ([u],[u]),
\]
where $u'\in T'$ denotes the image of $u\in U$

Suppose $u\in U-\{ 1\}$ nonzero. If $d\phi ([u],[u])=0$, then the displayed formula shows that the orbit 
$\{ u^k(a_0)|k\in \ZZ\}$  has two opposite  limiting rays (spanned by $\pm (s(u')+\phi _u(a_0))$), 
which evidently contradicts our assumption. So $d\phi ([u],[u])=0$ is nonzero, and the same formula above shows that it must belong to $C_0$. 
So if we define $\sigma $ by $\sigma (a',t'):=s(t')+\phi (a',t')$, 
then $\sigma$ has the asserted properties 
(the uniqueness of $\sigma$ is easy).
\end{proof}

We return to the face $F$ and  recall that $T_F:=V^F/V_F$.

\begin{corollary} 
$U_\G(F)$ acts trivially on $T_F$ , 
so that we can define homomorphisms of groups
\begin{align*}
j: U_\G(F)\to &\Hom (V/V^F, T_F)  \text{  such that } u(x')=x'+j_uq_F(x'),\\
k: U_\G(F)\to &\Hom(T_F,V_F)  \text{  such that } u(y)=y+k_u\pi_F(y).
\end{align*} 
with $x\in V$ and $y\in V^F$. Moreover, if $x\in C$,
then its image $x''$ in $V/V^F$ has the property that the map
$u\in U_\G(F)\mapsto j_u(x'')\in T_F$ 
is an monomorphism of groups whose image spans $T_F$.
\end{corollary}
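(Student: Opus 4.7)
The plan is to apply Proposition~\ref{prop:unipotentgeneral} with the following setup. Fix $x_0 \in C$ and take $A := x_0 + V^F$ (an affine subspace of $V$ with translation space $T := V^F$), $T_0 := V_F$ (so that $T' := T/T_0 = T_F$), $C_0 := \bar F$ (a closed nondegenerate cone in $V_F = T_0$), $U := U_\G(F)(\RR)$, and $D := A \cap C$. The affine space $A$ is $U$-invariant because any $u \in U \subset Z_\G(F^\d)$ induces the identity on $V/V^F$ and hence $u(x_0) - x_0 \in V^F$; the set $D$ is nonempty (it contains $x_0$), open in $A$, and $U$-invariant.

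The hypotheses of the proposition are verified as follows. $U$ is unipotent by Lemma~\ref{lemma:unip}, and fixes $T_0 = V_F$ pointwise because $U \subset Z_\G(F)$ fixes $F$ pointwise and $V_F$ is the linear span of $F$. The key cone hypothesis---that every limiting ray of a $U$-orbit in $D$ lies in $C_0 = \bar F$---is checked as follows: for $a \in D$ and $u \in U$, the orbit $\{u^k(a)\}_{k \in \ZZ}$ is contained in $C \subset C_+$, and its direction at infinity is proportional to $(u-1)^m(a)$ for the largest integer $m$ with $(u-1)^m(a) \neq 0$. Since $(u-1)V \subset V^F$, this direction lies in $V^F$; since $C_+$ is closed (Theorem~\ref{thm:bdlypol}), it also lies in $C_+$; hence in $V^F \cap C_+ = F = \bar F$ by Proposition~\ref{prop:facestructures}(ii).

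Granted the proposition, $U(\RR)$ acts on $A' = A/T_0$ as the full group of translations, yielding an isomorphism $U(\RR) \cong T' = T_F$ via $u \mapsto [u]$; in particular $U(\RR)$ acts trivially on $T_F$, which is the first assertion. The maps $j_u$ and $k_u$ then arise by descent: since $u - 1 \colon V \to V^F$ vanishes on $V_F$ (as $u \in Z_\G(F)$) and on $V^F$ modulo $V_F$ (from the triviality just proved), it descends through $q_F$ to give $j_u \colon V/V^F \to T_F$ with $u(x') = x' + j_u q_F(x')$; similarly $(u - 1)|_{V^F}$ maps $V^F$ into $V_F$ and vanishes on $V_F$, so descends through $\pi_F$ to give $k_u \colon T_F \to V_F$. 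For arbitrary $x \in C$, one re-applies the proposition with $x$ as basepoint in place of $x_0$; the resulting identification $U(\RR) \cong T_F$ is precisely $u \mapsto (u(x) - x) \bmod V_F = j_u(x'')$, which is thus an isomorphism.

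The main obstacle will be verifying the spanning hypothesis of Proposition~\ref{prop:unipotentgeneral}, namely that $A/T_0$ be spanned by a $U$-orbit; this amounts to surjectivity of $u \mapsto u(x_0) - x_0 \bmod V_F$ as a map $U \to T_F$. I would handle this by applying the proposition first to the smaller $U$-invariant affine subspace of $A$ generated by $V_F$ together with a single $U$-orbit (where the spanning hypothesis holds tautologically), yielding an injection $U(\RR) \hookrightarrow T_F$; surjectivity onto $T_F$ then follows from the nondegeneracy of the bilinear form $d\sigma$ on $C_0 = \bar F$ combined with the observation that $x'' \in V/V^F$ lies in $\pi^F(C)$, the open dual of $F^\d$, for any $x \in C$.
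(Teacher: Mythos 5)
Your framework is the right one (apply Proposition~\ref{prop:unipotentgeneral} to the affine space $A$ parallel to $V^F$ with $T_0 = V_F$, $C_0 = \bar F$, $D = A\cap C$), and that is indeed what the paper does. But there are two problems, one minor and one essential.

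The minor one is in the cone hypothesis. You claim the limiting direction of $\{u^k(a)\}$ lies in $C_+$ because ``$C_+$ is closed (Theorem~\ref{thm:bdlypol}).'' This is not what that theorem says: it asserts that $[C\cap L]$ is closed, whereas $C_+ = [\bar C\cap V(\QQ)]$ is in general a proper, non-closed subset of $\bar C$. Likewise, ``$F=\bar F$'' is false in general. What is true and what you need is the identity $\bar C\cap V^F=\bar F$, which is the dual form of Corollary~\ref{cor:Fdagger} and is exactly what the paper invokes. (Simpler still: since $D$ is convex, its recession cone $\Tr(D)=\bar C\cap V^F=\bar F$ controls all limiting rays, not just those of cyclic orbits.)

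The essential gap is the spanning hypothesis, and your proposed fix does not work. You suggest applying the proposition to the smaller $U$-invariant affine subspace $A_1\subset A$ spanned by $V_F$ and a single $U$-orbit, and then arguing surjectivity onto all of $T_F$ via ``nondegeneracy of $d\sigma$.'' But $d\sigma$ lives on $T_1/V_F \times T_1/V_F$ where $T_1 = \Tr(A_1)$; its nondegeneracy is a statement internal to $T_1/V_F$ and says nothing about whether $T_1=V^F$. Nothing in this line of reasoning can rule out that $U_\G(F)(\RR)$ is a proper subgroup of the translations of $T_F$. The missing input is Lemma~\ref{lemma:stucturefaces}~$(ii)$: it shows that $Z_\G(F,F^\d)$, and hence its finite-index subgroup $U_\G(F)$, acts with compact fundamental set on the image of $D$ in $A/V_F$. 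This cocompactness is what forces the (a priori discrete) lattice $U_\G(F)$ to have full rank in $T_F$, and it is precisely the arithmetic/polyhedral finiteness that this corollary is ultimately resting on. Without that input, the statement simply is not provable from general facts about unipotent groups and cones.
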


\begin{proof} Let $x''\in C(F)$, let $A$ denote its pre-image in $V$,
and set $D=A\cap C$.
Then $D$ is a nonempty open convex subset of $A$ and by \ref{cor:Fdagger} we have 
$\Tr(D)=\bar C\cap V^F=\bar F$. 
It follows from part (ii) of Lemma \ref{lemma:stucturefaces}
that $U_\G(F)$ acts with compact fundamental set on the image of $D$ in 
$A/V_F$. 
Hence Proposition \ref{prop:unipotentgeneral} applies (with $C=\bar F$)
and we find that $U_\G(F)$ acts in $A/V_F$ faithfully as a group 
$T_F$ of translations and this group spans $T_F$.
\end{proof}

We can now complete the proof of Theorem \ref{thm:stabilizerfacemain}.

\begin{proof}[Proof of Theorem  \ref{thm:stabilizerfacemain}]
Let $s:T_F\to V^F$
be a linear section of the projection. Since $u\in
U_\G(F)$ acts trivially on $V_F$, and as $x'\mapsto
x'+j_u(x'')$ on $V/V_F$, it follows that there
exists a $\phi _u\in \Hom(V/V_F,V_F)$ such that
\[
u(x)=x+sj_u(x'')+\phi _u(x'+\tfrac{1}{2}j_u(x'')).
\]
Since $u$ acts on $V^F$ as $x\mapsto x+k_u(x')$, 
the restriction of $\phi_u$ to $T_F$ 
must be $k_u$. So $\phi_uj_u (x'')=\tfrac{1}{2}k_uj_u(x'')$. If we set 
\[
\sigma _u(x') :=sj_u(x'')+\phi _u(x'),
\]
then it also follows that the restriction of $\sigma
_u$ to $T_F$ is $k_u$. It is clear
that the map $V/V^F\rightarrow V/V_F$
induced by $\sigma _u$ is precisely $j_u$. So
$\sigma_u$ has the property $(i)$. The
assertion that $\sigma_u$ is unique for these
properties is obvious.

If $u,v\in U_\G(F)$, then
\[
v(\sigma _u(x'))=\sigma _u(x')+k_v(\sigma
_u(x')')=\sigma _u(x')+k_vj_u(x''),
\]
and so
\begin{align*}
vu(x) &=v(x+\sigma _u(x')+\tfrac{1}{2}k_uj_u(x''))\\
&=x+\sigma_v(x')+\tfrac{1}{2}k_vj_v(x')+\sigma_u
(x')+k_vj_u(x'')+\tfrac{1}{2}k_uj_u(x'')\\
&=x+(\sigma_u+\sigma_v)(x')+\tfrac{1}{2}(k_uj_u+2k_vj_u+k_vj_v)(x'').
\end{align*}
Since $vu=uv$, the symmetry property $(ii)$ follows. 
As $\sigma_{vu}$ is characterized by
\[
vu(x)=x+\sigma _{vu}(x')+\tfrac{1}{2}(k_u+k_v)(j_u+j_v)(x'').
\]
this also yields the linearity of $\sigma $. 
The same formula shows that for $r\in \ZZ$, 
\[
u^r(x)=x+r\sigma _u(x')+\tfrac{1}{2}r^2k_uj_u(x'').
\]
Property $(iii)$ follows from this.
\end{proof}

A case of interest is when $\G$ stabilizes a \emph{proper} face $F$ of $C_+$ (that is,
$\{ 0\}\subsetneq F\subsetneq C_+$). Then Theorem 
\ref{thm:stabilizerfacemain} shows that $\G$ is contains an extension of 
$\G (F)\times \G (F^\d)$ by the abelian unipotent group $U_\G (F)$ (of rank equal $\dim T_F$) as a subgroup of finite index.
If we take $F$ minimal for this property, then $\G (F)$ leaves no proper face  of $F$ invariant and
if we take $F$ maximal for this property, then $\G (F^\d)$ leaves no proper face  of $F^\d$ invariant.
In this way can often reduce our discussion to the  case when no a proper face $F$ of $C_+$ 
is preserved by $\G$. 

We can take this one a step further by reducing to the irreducible case, by which we mean that 
$\G$ does not leave invariant any proper subspace of $V$ defined over $\QQ$. 
In fact, if  $W\subset V$ is a proper $\G$-invariant subspace defined over $\QQ$, then we can distinguish three cases: 
\begin{enumerate}
\item[(a)] If $W\cap \bar C=\{ 0\}$, then the projection $\pi_W: V\to V/W$  maps $C$ onto a nondegenerate open cone and the triple $(V/W, \pi_W C,\G)$ is polyhedral.
\item[(b)] If dually, $W\cap C\not=\emptyset$, then $(W,C\cap W,\G)$ is polyhedral.
\item[(c)] If  $W$ meets $C_+$ in a proper face, then $\G$ stabilizes this face, a case we discussed above.
\end{enumerate}
Observe that in the first two cases $\G$ acts with finite kernel on $V/W$ resp.\ $W$. In case (b) this is clear, because
$W$ meets the locus where $\G$ acts properly discontinuously and  case (a) then follows by duality.

\end{document}